\DeclareMathAlphabet{\mathpzc}{OT1}{pzc}{m}{it}
\theoremstyle{plain}
\newtheorem{Thm}{Theorem}[section]
\newtheorem{Cor}{Corollary}[section]
\newtheorem{Prop}{Proposition}[section]
\newtheorem{Lem}{Lemma}[section]
\newtheorem{Remark}{Remark}[section]
\numberwithin{equation}{section}
\newcommand{\Card}[1]{\left\vert #1\right\vert} 
\newcommand{\Places}{\mathcal{P}} 
\DeclareMathOperator{\zfun}{\zeta} 
\DeclareMathOperator{\zint}{\mathcal{Z}} 
\DeclareMathOperator{\Lfun}{\mathcal{L}} 
\newcommand{\st}{\operatorname{\mathfrak{st}}} 
\newcommand{\Hecke}{\mathcal{H}} 
\newcommand{\coset}[1]{\left[ #1 \right]}  
\newcommand{\FNorm}[1]{\left\vert #1 \right\vert} 
\newcommand{\Ind}{\operatorname{Ind}}
\newcommand{\ind}{\operatorname{ind}}
\newcommand{\Hom}{\operatorname{Hom}}
\newcommand{\G}{\mathbb{G}}
\newcommand{\C}{\mathbb C}
\newcommand{\A}{\mathbb{A}}
\newcommand{\R}{\mathbb{R}}
\newcommand{\N}{\mathbb{N}}
\newcommand{\M}{\mathcal{M}}
\newcommand{\mO}{\mathcal{O}}
\newcommand {\integral}[1]{\int\limits_{{#1}(k)\backslash {#1}(\A)}}
\newcommand{\bk}[1]{\left(#1\right)} 
\newcommand{\bm}{\begin{multline*}}
\newcommand{\tu}{\end  {multline*}}
\DeclareMathOperator{\Id}{\mathbbm{1}} 
\DeclareMathOperator{\unif}{\varpi} 
\newcommand{\Sch}[1]{\operatorname{\mathcal{S}}\left( #1 \right)} 
\newcommand{\meas}{\mu} 
\newcommand{\less}{<}
\newcommand{\more}{>}
\newcommand{\conv}{*}
\renewcommand{\check}[1]{#1 ^{\vee}} 
\DeclareMathOperator{\Real}{\mathfrak{Re}} 
\newcommand{\piece}[1]{\left\{\begin{matrix} #1 \end{matrix}\right.} 
\newcommand{\set}[1]{\left\{ #1 \right\}} 
\newcommand{\mvert}{\mathrel{}\middle\vert\mathrel{}} 
\title{Rankin-Selberg integral with Non-unique model
for the standard $\Lfun$-function of $G_2$}
\author{Nadya  Gurevich and Avner Segal}
\address{School of Mathematics, Ben Gurion University of the Negev, POB 653, Be'er Sheva 84105, Israel}
\email{ngur@math.bgu.ac.il}
\email{avners@math.bgu.ac.il}
\renewcommand\section{\@startsection{section}{1}{\z@}%
                                  {-3.5ex \@plus -1ex \@minus-.2ex}%
                                  {2.3ex \@plus.2ex}%
                                  {\center\normalfont\large\bfseries}}
\begin{document}
\begin{abstract}
Let $\Lfun^{S}\bk{\pi,s,\st}$ be a partial $\Lfun$-function of degree 7 of a cuspidal automorphic representation $\pi$ of the exceptional group $G_2$. Here we construct a Rankin-Selberg integral for representations having certain Fourier coefficient.
\end{abstract}

\maketitle

\begin{center}
Mathematics Subject Classification: 22E55 (11F27 11F70 22E50)
\end{center}

\tableofcontents


\begin{flushright}
\calligra\bfseries
In memory of \\
Ilya Piatetski-Shapiro \\
and Stephen Rallis
\end{flushright}

\section{Introduction}
Until the late 1980's it was believed that a Rankin-Selberg integral must unfold 
to a unique model of the representation in order to be factorizable. 
By unique model we mean that the space of functionals on the representation space
with certain invariance properties is one-dimensional.
The most common is the Whittaker model, but other unique models such as Bessel model were also used.

In their pioneering work  \cite{MR965059} I. Piatetski-Shapiro \& S. Rallis interpreted an integral, earlier considered by A. Andrianov\cite{MR525651}, as an adelic integral that unfolds to a non-unique model. Although the functional is not factorizable, the integral is, since the local integral  produces the same $\Lfun$-factor for {\bf any} functional with the same invariance properties applied to a spherical vector. 

There are many examples of adelic integrals that unfold with non-unique models. Only few of them were shown to represent $\Lfun$-functions. Some more examples are detailed in \cite{MR2192819} and \cite{MR1361787}. All the examples rely on the knowledge of the generating function for the considered $\Lfun$-function.

In this paper we consider a new  Rankin-Selberg integral on the exceptional group $G_2$  and prove that it represents the standard $\Lfun$-function $\Lfun^{S}\bk{\pi,s,\st}$ of degree $7$ for cuspidal representations having certain Fourier coefficient along the Heisenberg unipotent subgroup. The candidate global integral was suggested by Dihua Jiang in the course of the work on \cite{MR1918673} and he also performed the unfolding. However, since the generating function for the $\Lfun$-function has not been known, the unramified computation was not completed. It is only now that we found a way to overcome this difficulty. To the best of our knowledge this is the first time the unramified computation is performed without explicit knowledge of the generating function.

The integral introduced here binds the analytic behaviour of $\Lfun^{S}\bk{\pi,s,\st}$ with that of a degenerate Eisenstein series of $Spin_8$ which was studied in \cite{MR1918673}. In the last section we use information on the poles of this Eisenstein series to show that for a cuspidal representation $\pi$ having certain Fourier coefficient, the non-vanishing of the theta lift of $\pi$ to the finite group scheme $S_3$ is equivalent to the $\Lfun$-function having a double pole at $s=2$. 

The Rankin-Selberg integral for the standard $\Lfun$-function of generic representions $G_2$ was constructed by D. Ginzburg in \cite{MR1203229}. Recently D. Ginzburg and J. Hundley \cite{UnPublishedGinzburg} have established the meromorphic continuation of $\Lfun^{S}\bk{\pi,s,\st}$ for any cuspidal representation $\pi$ using a doubling construction. Their integral representation shows that the set of  poles of $\Lfun^{S}\bk{\pi,s,\st}$ is contained in the set of poles of a degenerate Eisenstein series on the exceptional group of type $E_8$.


\section{Preliminaries}
Let $k$ be a number field and $\Places$ be its set of places. For any $\nu\in\Places$ denote by $k_\nu$ the local field associated to $\nu$. If $\nu\less\infty$ denote by $\mO_\nu$ the ring of integers of $k_\nu$ and by $q_\nu$ the cardinality of the residue field of $k_\nu$.
\subsection{The group $G_2$}
Let $G$ be the split simple algebraic group of the exceptional type $G_2$ defined over $k$ with torus $T$ and Borel subgroup $B$. Fix a root system of $G$ and denote by $\alpha$ and $\beta$ the short and the long simple roots respectively.
The Dynkin digram of $G$ has the form
\begin{figure}[h!]
\label{fig:G2}
\begin{center}
\[
\xygraph{
!{<0cm,0cm>;<0cm,1cm>:<1cm,0cm>::}
!{(0.4,-1)}*{\alpha}="label1"
!{(0,-1)}*{\bigcirc}="1"
!{(0,0.1)}="c"
!{(0.2,  -0.1)}="c1"
!{(-0.2,-0.1)}="c2"
!{(0.4,1)}*{\beta}="label2"
!{(0,1)}*{\bigcirc}="2"
"1"-@3"2" "c1"-"c" "c"-"c2" 
}
\]
\end{center}
\end{figure}

and the set of positive roots is 
\[
\Phi^+=\{\alpha,\beta, \alpha+\beta,2\alpha+\beta,3\alpha+\beta,3\alpha+2\beta\} \ .
\]
The fundamental weights are denoted by
\[
\omega_1=2\alpha+\beta,\quad \omega_2=3\alpha+2\beta \ .
\]
For any root $\gamma$ fix a one-parametric subgroup $x_\gamma: \G_a\rightarrow G$. For any simple root $\gamma$ denote by $w_\gamma$ the simple reflection with respect to it, that is an element of the Weyl group of $G$. Define  also the coroot subgroups $h_{\gamma}: \G_m\rightarrow G$ such that for any root $\epsilon$
\[
\epsilon\bk{h_\gamma\bk{t}}=t^{<\epsilon,\check{\gamma}>} \ .
\]


\subsection{The partial $\Lfun$-function}
The dual group of $G$ is isomorphic to $G_2(\C)$. Denote the seven-dimensional complex representation of $G_2(\C)$ by $\st$. For an irreducible cuspidal representation $\pi=\otimes_v \pi_v$, unramified outside of  a finite set of places $S$, the standard partial $\Lfun$-function of $\pi$ is defined  by:
\[
\Lfun^S\bk{s,\pi,\st}=\prod_{v\notin S} \frac{1}{\det\bk{I- \st(t_{\pi_v}) q_v^{-s}}} \ .
\]
Here $t_{\pi_v}$ is the Satake parameter of $\pi_v$.

\subsection{Fourier coefficients}
The group $G$ contains a Heisenberg parabolic subgroup $P=M\cdot U$. The Levi part $M$ is isomorphic to $GL_2$ generated by the simple root $\alpha$, while $U$ is a five dimensional Heisenberg group. We denote the elements of $U$ by
\[
u(r_1,r_2,r_3,r_4,r_5):= 
x_\beta\bk{r_1} x_{\alpha+\beta}\bk{r_2} x_{2\alpha+\beta}\bk{r_3} x_{3\alpha+\beta}\bk{r_4} x_{3\alpha+2\beta}\bk{r_5} \ .
\]

The group $M$ acts naturally on $U$ and hence on $\Hom\bk{U, \G_a}$. It was shown in \cite{MR1637485} that for any field $F$ of characteristic zero the $M\bk{F}$-orbits of $\Hom\bk{U\bk{F},F}$ are naturally parametrized by isomorphism classes of cubic $F$-algebras.

Fixing an additive complex unitary character $\psi=\otimes_\nu \psi_\nu$ of $k\backslash \A$ this  give rise to the correspondence between $M\bk{k}$-orbits of complex characters of $U\bk{k}\backslash U\bk{\A}$ and cubic algebras over $k$. Let us denote by $\Psi_s$ the character corresponding to the {\bf split} cubic algebra $k\times k\times k$ and call it the {\bf split character}. More explicitly, 
\[
\Psi_s\bk{u\bk{r_1,r_2,r_3,r_4,r_5}}=\psi\bk{r_2+r_3} \ .
\]
Its stabilizer $S_{\Psi_s}$ in $M\bk{k}$ is isomorphic to $S_3$ and is generated by $w_{\alpha}$ and $ h_{\alpha}\bk{-1} x_{\alpha}\bk{-1}x_{-\alpha}\bk{1}$.

Denote by $\mathcal{A}\bk{G}$ the space of automorphic forms on $G$. For any  form $\varphi$ in $\mathcal{A}\bk{G} $ and complex character $\Psi$ of $U\bk{k}\backslash U\bk{\A}$ define the Fourier coefficient of $\varphi $ with respect to $\bk{U,\Psi}$ by
\[
L_{\Psi}\bk{\varphi}\bk{g}=\integral{U} \varphi\bk{ug}\overline{\Psi\bk{u}}\,du \ .
\]
For any $g\in{G}$ $L_\Psi\bk{\cdot}\bk{g}$ defines a functional in $\Hom_{U\bk{\A}}\bk{\mathcal{A}\bk{G},\C_\Psi}$.

For an automorphic representation $\pi$ of $G\bk{\A}$ we say that $\pi$ 
supports a $\bk{U,\Psi}$ coefficient if there exists a function $\varphi$ from the underlying space of $\pi$ such that $L_{\Psi}\bk{\varphi}\not\equiv 0$.

It was shown in \cite[Theorem 3.1]{MR2181091} that for any cuspidal representation $\pi$ there exists an \'etale cubic algebra such that $\pi$ supports a Fourier coefficient with respect to this algebra. Conversely, in \cite{MR1918673} it was shown that for any \'etale cubic algebra there exists a cuspidal representation supporting the Fourier coefficient corresponding to it. In this paper we consider only representations that support the split Fourier coefficient.

For a finite $\nu\in\Places$ denote by $K_{\nu}$ the maximal compact
subgroup $G\bk{\mO_v}$ of $G\bk{k_v}$ and by $\Hecke_{\nu}$ the spherical Hecke algebra. Given a complex character $\Psi$ of $U\bk{k_v}$ define 
\begin{align*}
\mathcal{M}_\Psi &=\set{f:G\bk{k_v}\rightarrow \C \mvert  f\bk{ugk}=\Psi\bk{u}f\bk{g} \quad \forall u\in U\bk{k_{\nu}}, k\in K_\nu} \\
\mathcal{M}^0_\Psi& =\set{ f:G\bk{k_v}\rightarrow \C \mvert f\bk{sugk}=\Psi\bk{u}f\bk{g}\quad \forall u\in U\bk{k_\nu},s\in S_\Psi, k\in K_\nu}  \ .
\end{align*}

For $f\in \Hecke_{\nu}$ define its Fourier transform $f^\Psi$ with respect to the character $\Psi$ by
\[
f^\Psi\bk{g}=\int\limits_{U\bk{k_v}} f\bk{ug}\overline{\Psi\bk{u}}\,du \ .
\]
Obviously $f^\Psi$ belongs to $\mathcal{M}^0_{\Psi}$.

\subsection{ The group $Spin_8$}
Let $H$ be a simply connected algebraic group of type $D_4$. We label its simple roots according to the following diagram.
\begin{figure}[h!]
\label{fig:D4}
\begin{center}
\[
\xygraph{
!{<0cm,0cm>;<0cm,1cm>:<1cm,0cm>::}
!{(0,-1)}*{\bigcirc}="1"
!{(0.4,-1)}*{\alpha_1}="label1"
!{(0,0)}*{\bigcirc}="2"
!{(0.4,0)}*{\alpha_2}="label2"
!{(0,1)}*{\bigcirc}="3"
!{(0.4,1)}*{\alpha_3}="label3"
!{(-1,0)}*{\bigcirc}="4"
!{(-1,0.4)}*{\alpha_4}="label4"
"1"-"2" "2"-"3" "2"-"4"
}
\]
\end{center}
\end{figure}

The group of outer automorphisms of $H$ is isomorphic to $S_3$. Fixing one-parametric subgroups $x_\gamma: \G_a\rightarrow H$ defines a splitting of the sequence 
\[
1\rightarrow H^{ad}\rightarrow Aut\bk{H}\rightarrow Out\bk{H}\rightarrow 1 \ .
\]
In particular the semidirect product $H\rtimes S_3$ can be formed. It is well known that the centralizer of $S_3$ in $H\rtimes S_3$ is the group $G$. We identify $G$ with a subgroup of $H$ in this way. The group $H$ contains a maximal Heisenberg parabolic $P_H=M_H U_H$ such that $P=P_H\cap G$ given by
\[
M_H\simeq \set{\bk{g_1, g_2, g_3} \in GL_2\times GL_2\times GL_2 \mvert \det\bk{g_1}=\det\bk{g_2}=\det\bk{g_3}} \ .
\]
The modulus character of $P_H$ is given by $\delta_{P_H}\bk{g_1,g_2,g_3}=\FNorm{\det\bk{g_1}}^5$.

\subsection{The Eisenstein series}\label{ss_eisenstein}
Consider the induced representation $I_H\bk{s}:= Ind^{H\bk{\A}}_{P_H\bk{\A}} \delta_{P_H}^s$. All induced representations in this paper are not normalized. For any $K$-finite standard section $f_s$ define an Eisenstein series
\[
E\bk{g,f_s}=\sum_{\gamma \in P_H\bk{F}\backslash H\bk{F}} f_s\bk{\gamma g} \ .
\]
It has a meromorphic continuation to the whole complex plane. The behaviour at $s=4/5$ was studied
in  \cite{MR1918673}.

\begin{Prop}[\cite{MR1918673} Proposition 9.1]
\label{thm:eisenstein}
For any standard section $f$, the Eisenstein series $E\bk{g, f_s}$ 
has at most a double pole at $s=\frac{4}{5}$. The double pole is attained by the spherical section $f^0$. Also, the space
\[
Span_{\C}\set{\bk{s-\frac{4}{5}}^2 E\bk{g,f_s}\bigg{\vert}_{s=\frac{4}{5}}} \ ,
\]
is isomorphic to the minimal representation $\Pi$ of $H$.
\end{Prop}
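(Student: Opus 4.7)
The standard route to controlling the poles of a degenerate Eisenstein series is via the Langlands constant term formula
\[
E_{P_H}\bk{g, f_s} = \sum_w M\bk{w, s} f_s\bk{g},
\]
summed over Weyl representatives of the $P_H$-orbits on the flag variety which preserve $M_H$. Each $M\bk{w, s}$, applied to the spherical section $f^0_s$, factors as a product of completed Dedekind zeta values; since $\delta_{P_H}^s\bk{g_1,g_2,g_3} = \FNorm{\det g_1}^{5s}$, these values all have the form $\zeta^*(5s + c)$ for various integer constants $c$. Among these, only $c = -4$ and $c = -3$ produce poles at $s = 4/5$ (namely $\zeta^*(0)$ and $\zeta^*(1)$ respectively). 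A careful accounting over the $D_4$ Weyl group -- which can be checked term by term against the Bruhat decomposition $P_H \backslash H / P_H$ -- shows that the longest Weyl representative contributes both such numerator factors, no denominator factor vanishes at this point, and no other Weyl term carries more than one pole. This bounds the order of pole of the constant term, and hence of the Eisenstein series itself, by two.

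\textbf{Attaining the double pole.} For the spherical section, the coefficient of $\bk{s-4/5}^{-2}$ in the constant term is read directly off the leading Laurent expansion of $M\bk{w_0, s} f^0_s$. Because the double-pole contribution arises from a unique Weyl representative, there can be no inter-term cancellation of the leading Laurent coefficient, and $f^0$ indeed attains the double pole.

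\textbf{Identifying the residue.} Let $\mathcal{R}$ denote the span of $\bk{s-4/5}^2 E\bk{g, f_s}\big|_{s=4/5}$. It is a quotient of the globally induced representation $I_H\bk{4/5}$. The plan is to show $\mathcal{R} \cong \Pi$ via a local-global argument: at each place, the degenerate principal series $I_{H,\nu}\bk{4/5}$ has a unique irreducible quotient of minimal Gelfand-Kirillov dimension, namely the local minimal representation, which follows from the known reducibility structure together with Jacquet module computations. Globally, $\mathcal{R}$ must factor through this local quotient at every place, provided its wavefront set is concentrated on the closure of the minimal nilpotent orbit. The latter is established by computing the Fourier coefficients of $\mathcal{R}$ along $U_H$ and verifying that they vanish on characters outside the minimal orbit closure.

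\textbf{Main obstacle.} The Weyl-group pole count is intricate but ultimately finite bookkeeping. The genuine difficulty lies in identifying $\mathcal{R}$ with the specific minimal representation $\Pi$: one must exclude further degeneration of the residual quotient and show it is the unique automorphic realization of the minimal representation. This combines the local classification of irreducible quotients of $I_{H,\nu}\bk{4/5}$ with a global orbit analysis of the Fourier coefficients of $\mathcal{R}$, which together pin down the support of the wavefront set and force irreducibility.
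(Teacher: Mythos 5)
You should note at the outset that this paper contains no proof of the proposition at all: it is quoted, with attribution, from \cite{MR1918673} (Proposition 9.1), and is used here as an external input. So the only comparison available is with that reference's argument, whose broad architecture your sketch does mirror: constant term of $E\bk{g,f_s}$ along $P_H$, bookkeeping of completed zeta factors in the intertwining operators for the $D_4$ Weyl group, and identification of the leading Laurent term at $s=\frac{4}{5}$ with the minimal representation $\Pi$ of $H$.

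As a proof, however, your proposal has two genuine gaps. First, the bound ``at most a double pole for \emph{any} standard section'' does not follow from counting zeta factors in the spherical constant term. For a general $K$-finite section (ramified or archimedean) each $M\bk{w,s}f_s$ is a ratio of completed zeta functions times a \emph{normalized} intertwining operator, and one must prove that these normalized operators are holomorphic (and, for the nonvanishing claim, nonzero on the spherical vector) at $s=\frac{4}{5}$ at every place; without that local input a ramified section could a priori contribute a higher-order pole, and the statements ``no denominator factor vanishes'' and ``no inter-term cancellation'' are assertions that require the actual $D_4$ Bruhat/Gindikin--Karpelevich computation rather than being consequences of there being a unique longest Weyl representative. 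Second, and more seriously, the identification of $Span_{\C}\set{\bk{s-\frac{4}{5}}^2E\bk{g,f_s}\vert_{s=4/5}}$ with $\Pi$ is the substantive content of the proposition, and your plan assumes what has to be proven: that the local degenerate principal series $I_{H,\nu}\bk{4/5}$ has the local minimal representation as the unique quotient through which the global leading-term module can factor, and that the wavefront set of the residue is supported on the closure of the minimal orbit. Establishing the latter requires computing Fourier coefficients of the residue along $U_H$ (via the non-constant terms of the Eisenstein series), and the former requires the local classification of constituents of $I_{H,\nu}\bk{4/5}$; both are exactly the nontrivial inputs supplied in the cited literature on the minimal representation of $Spin_8$ and its automorphic realization. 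As written, your text is a plausible roadmap that names these steps but does not carry any of them out, so it cannot substitute for the citation the paper relies on.
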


It is customary to define the normalized Eisenstein series
\[
E^\ast\bk{g,f_s}= j\bk{s} E\bk{g,f_s},\text{ where } j\bk{s}=\zeta\bk{5s} \zeta\bk{5s-1}^2\zeta\bk{10s-4} \ .
\]

\section{The Zeta Integral}
\label{s_zetaintegral}
Let $\pi=\otimes \pi_v$ be an irreducible cuspidal representation of $G\bk{\A}$. For $\varphi\in\pi$ and $f_s \in I_H\bk{s}$ we consider the following integral,
\[
\zint\bk{s,\varphi,f}=\integral{G} \varphi(g) E^\ast (g,f_s)\, dg \ .
\]
Since $\varphi$ is cuspidal, and hence rapidly decreasing, the integral
defines a meromorphic function on the complex plane. Our main result is:

\begin{Thm}
\label{thm:Main}
Let $\pi=\otimes \pi_\nu$ be an irreducible cuspidal representation supporting the split Fourier coefficient. Let $\varphi=\otimes\varphi_\nu \in \pi$, $f_s=\otimes f_{s,\nu}\in I_H\bk{s}$ be factorizable data. Let $S\subset\Places$ be a finite set such that if $\nu\notin{S}$ then
\begin{itemize}
\item $\nu\not\vert{2,3,\infty}$
\item $\varphi_\nu$ is spherical
\item $f_{s,\nu}$ is spherical.
\end{itemize}
Then
\[
\mathcal{Z}\bk{s,\varphi,f}=\Lfun^S\bk{s,\pi, \st} d_S\bk{s, \varphi_S, f_S} \ .
\] 
Moreover for any $s_0$ there exist vectors $\varphi_S,f_S$ such that  $d_S\bk{s,\varphi_S,f_S}$ is analytic in a neighbourhood of $s_0$ and $d_S\bk{s_0,\varphi_S,f_S}\neq 0$.

In particular the partial $\Lfun$-function $\Lfun^S(s,\pi,\st)$ admits a meromorphic continuation.
\end{Thm}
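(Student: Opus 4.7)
The plan is to treat the integral by the classical Rankin--Selberg template adapted to the non-unique-model situation, following the strategy pioneered by Piatetski-Shapiro and Rallis in \cite{MR965059}. There are four stages: unfolding, globally Eulerian factorization, unramified computation, and non-vanishing of the ramified factor.

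First I would unfold the Eisenstein series. Writing
\[
E\bk{g,f_s}=\sum_{\gamma\in P_H\bk{k}\backslash H\bk{k}} f_s\bk{\gamma g}
\]
and exchanging sum and integral, $\zint$ becomes a sum over double cosets $P_H\bk{k}\backslash H\bk{k}/G\bk{k}$. Cuspidality of $\varphi$ annihilates all orbits whose $G$-stabilizer contains a non-trivial unipotent piece along which $\varphi$ averages to zero; this should leave a single open orbit whose $G$-stabilizer is of the form $S_{\Psi_s}\cdot U$, and the corresponding inner integration over $U\bk{k}\backslash U\bk{\A}$ produces precisely the split Fourier coefficient $L_{\Psi_s}\bk{\varphi}$. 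The output is an integral of the form
\[
\zint\bk{s,\varphi,f}=j\bk{s}\int\limits_{S_{\Psi_s}\bk{k}U\bk{\A}\backslash G\bk{\A}} L_{\Psi_s}\bk{\varphi}\bk{g}\, f_s\bk{\gamma_0 g}\, dg
\]
for a suitable representative $\gamma_0$; this unfolding is the one attributed to Jiang in the introduction.

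Next I would establish Eulerian factorization. The delicate point is that $L_{\Psi_s}$ is \emph{not} a factorizable functional, since the $\bk{U,\Psi_s}$-model is non-unique. The mechanism, following \cite{MR965059}, is that at almost every place the local integral depends only on the projection of the integrand onto a canonical subspace of $\mathcal{M}^0_{\Psi_s}$ -- essentially the spherical component -- and this projection \emph{is} factorizable for the spherical data fixed at $\nu\notin S$. This lets me write $\zint\bk{s,\varphi,f}=\prod_\nu Z_\nu\bk{s,W_\nu,f_{s,\nu}}$ with $Z_\nu$ unambiguously defined at unramified places and depending only on the pair $\bk{\varphi_S,f_S}$ at ramified places, to be absorbed into $d_S$.

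The heart of the proof is the unramified computation: for $\nu\notin S$ one must show $Z_\nu\bk{s,W^0_\nu,f^0_{s,\nu}}=\Lfun_\nu\bk{s,\pi_\nu,\st}$ up to the normalization $j\bk{s}$. Because no explicit generating function for the standard $\Lfun$-function in the split $\bk{U,\Psi_s}$-model is known, the standard Casselman--Shalika route is unavailable -- this is the main obstacle. My plan is to exploit the Fourier transform $f\mapsto f^{\Psi_s}$ from $\Hecke_\nu$ to $\mathcal{M}^0_{\Psi_s}$ introduced in the preliminaries: apply spherical Hecke operators to the spherical vector, translate these to relations on $Z_\nu$, and use the $S_3$-invariance coming from $S_{\Psi_s}$ together with the Satake isomorphism for $G_2$ to derive a recursion that is satisfied by $\det\bk{I-\st\bk{t_{\pi_\nu}} q_\nu^{-s}}^{-1}$ and that has a unique solution matching the expected normalization. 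This is the novelty claimed in the introduction and the only step that I expect to be genuinely hard.

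Finally, at places $\nu\in S$ one obtains $d_S\bk{s,\varphi_S,f_S}$ by the ramified local integrals. A standard Dixmier--Malliavin / compact-support argument -- choosing $\varphi_\nu$ and $f_{s,\nu}$ supported near $\gamma_0$ in the appropriate quotient -- shows that for any prescribed $s_0\in\C$ the data $\bk{\varphi_S,f_S}$ can be selected so that $d_S$ is holomorphic and non-zero at $s_0$. Combining the three preceding stages with the meromorphic continuation of $E^\ast$ provided by \autoref{thm:eisenstein} yields both the identity $\zint=\Lfun^S\cdot d_S$ and the meromorphic continuation of $\Lfun^S\bk{s,\pi,\st}$.
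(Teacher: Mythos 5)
Your overall architecture (unfold, factor, unramified computation, ramified non-vanishing) matches the paper's, but the two steps that carry the actual content have genuine gaps. First, your factorization mechanism is not valid as stated: in a non-unique model there is no ``canonical spherical component'' of $\M^0_{\Psi_s}$ onto which the integrand can be projected, and no distinguished local vector $W^0_\nu$; the space $\Hom_{U\bk{k_\nu}}\bk{\pi_\nu,\C_{\Psi_s}}$ is in general infinite dimensional and $L_{\Psi_s}$ admits no factorization at all. What actually makes the integral Eulerian --- and this is the crux of the Piatetski-Shapiro--Rallis method and of this paper --- is the statement that for \emph{every} $\bk{U,\Psi_s}$-equivariant functional $l$ the unramified local integral of $l\bk{\pi_\nu\bk{g}v^0}$ against $F^*_\nu\bk{g,s}$ equals $\Lfun\bk{5s-2,\pi_\nu,\st}\,l\bk{v^0}$ (\autoref{thm:unramified}); one then peels off Euler factors one place at a time by applying this to the functional $g_\nu\mapsto L_{\Psi_s}\bk{\varphi}\bk{g g_\nu}$, as in \autoref{PSRderive}. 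Your proposal never formulates this ``any functional'' statement, and without it the product $\prod_\nu Z_\nu$ you write down is not even well defined at the unramified places.

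Second, the unramified computation itself --- which you rightly call the heart of the matter --- is left as a plan (``derive a recursion from Hecke operators with a unique solution'') with no indication of why such a recursion exists, how uniqueness would be proved, or how it would yield the identity for all equivariant functionals simultaneously; as it stands this is a placeholder, not an argument. The paper's actual route is different: it reduces \autoref{thm:unramified} to the identity $\Delta^{\Psi_s}\bk{\cdot,5s-2}=F^*\bk{\cdot,s}$ between $F^*$ and the $\Psi_s$-Fourier transform of the (unknown) generating function $\Delta$, and, lacking a formula for $\Delta$, proves instead that both sides agree after convolution with an explicit Hecke element $P$ --- via the auxiliary bi-$K$-invariant function $D\bk{t,s}=\FNorm{\omega_1\bk{t}}^{5s+1}$, Macdonald's formula, and the explicit evaluations of $D^{\Psi_s}$ and $F^*\conv P$ in the appendices --- and finally shows $P$ is invertible in a C$^{*}$-completion of $\Hecke$, so the convolution can be cancelled. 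Smaller inaccuracies in your unfolding: the stabilizer of the open orbit is $T^\mu U^\mu$ with $U^\mu$ of codimension one in $U$, not $S_{\Psi_s}\cdot U$, so one still needs a Fourier expansion along $x_{\alpha+\beta}$, which is precisely what produces the extra integral defining $F^*$; and the intermediate orbits are eliminated using Rallis's expansion of the $R$-period into Whittaker coefficients together with the vanishing of $\int\psi$, not by cuspidality alone. Your ramified step is essentially the paper's and is fine as a sketch.
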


\begin{Remark}
If $\pi$ does not support the split Fourier coefficient, the zeta integral vanishes identically. However if $\pi$ supports a Fourier coeficient corresponding to an \'etale cubic algebra $E$ there is a similar integral, using an Eisenstein series on the inner form of $Spin_8$ corresponding to $E$, that is expected to represent the same $\Lfun$-function. We plan to study these integrals in the near future.
\end{Remark}

The proof of the theorem will occupy the rest of the paper. We will explain main ideas, deferring the technical part to later sections and appendices.

\begin{Thm}[Unfolding]
\label{thm: unfolding}
For $\Real\bk{s}>>0$ we have
\begin{equation}
\label{eq:unfolded integral}
\zint\bk{s,\varphi,f}=
\int\limits_{U\bk{\A}\backslash G\bk{\A}} L_{\Psi_s}\bk{\varphi}\bk{g} F^*\bk{g,s}\, dg \ ,
\end{equation}
where 
\[
F^*\bk{g,s}=j\bk{s}\int\limits_{\A} f_s\bk{w_2w_3 x_{-\alpha_1}\bk{1}x_{\alpha+\beta}\bk{r}}\psi\bk{r} dr \ .
\]
\end{Thm}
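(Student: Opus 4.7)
The strategy is the standard unfolding procedure for a Rankin-Selberg integral against an Eisenstein series on a larger group. First I would substitute the defining series $E\bk{g,f_s}=\sum_{\gamma\in P_H\bk{k}\backslash H\bk{k}} f_s\bk{\gamma g}$ into $\zint\bk{s,\varphi,f}$ and interchange sum and integral, which is justified by absolute convergence for $\Real\bk{s}\gg 0$ together with the rapid decay of the cuspidal form $\varphi$. This reduces the problem to analyzing the orbits of $G\bk{k}$ on the flag variety $P_H\bk{k}\backslash H\bk{k}$: each double coset $P_H\bk{k}\gamma G\bk{k}$ contributes a term
\[
\int\limits_{\Sigma_\gamma\bk{k}\backslash G\bk{\A}} f_s\bk{\gamma g}\,\varphi\bk{g}\,dg,
\]
where $\Sigma_\gamma = \gamma^{-1} P_H\bk{k} \gamma \cap G\bk{k}$. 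The relevant structural input is that $G$ sits in $H$ as the $S_3$-fixed points, so that $P = P_H \cap G$ is precisely the Heisenberg parabolic of $G_2$.

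Next, I would enumerate a finite set of representatives for $P_H\bk{k}\backslash H\bk{k}/G\bk{k}$ by a Bruhat-type computation, using the explicit description of the root systems of $\Spin_8$ and the embedded $G_2$. For every orbit whose stabilizer $\Sigma_\gamma$ contains the unipotent radical of a proper parabolic of $G$, the inner integral against $\varphi$ vanishes by cuspidality through the vanishing of the associated constant term. Careful bookkeeping should reveal that exactly one orbit survives, with a representative whose Bruhat normal form matches the element $w_2 w_3 x_{-\alpha_1}\bk{1}$ appearing in $F^*$.

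For the surviving orbit I would then carry out the residual unfolding. The stabilizer $\Sigma_\gamma \cap U$ is expected to be a codimension-one subgroup of $U$, with the quotient $U/\bk{\Sigma_\gamma\cap U} \simeq \Ga$ parametrized by the coordinate $r_2$ attached to $x_{\alpha+\beta}$. Unfolding along this quotient produces simultaneously the character $\Psi_s\bk{u\bk{r_1,\ldots,r_5}}=\psi\bk{r_2+r_3}$ paired against $\varphi$, which assembles into $L_{\Psi_s}\bk{\varphi}$, and a partial Fourier transform of $f_s$ in the single parameter $r$, which assembles into $F^*\bk{g,s}$ once the normalizing factor $j\bk{s}$ is restored. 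The main obstacle is the orbit analysis itself: one must locate the open orbit representative explicitly inside $\Spin_8$ under the exceptional embedding of $G_2$, compute its $G$-stabilizer, and verify that the induced character on $U\bk{k}\backslash U\bk{\A}$ is precisely the \emph{split} character $\Psi_s$ rather than a character attached to a non-split cubic algebra. This is what forces the hypothesis that $\pi$ support the split Fourier coefficient, and explains the specific appearance of $w_2 w_3 x_{-\alpha_1}\bk{1}$ in the formula for $F^*$.
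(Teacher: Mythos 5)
Your skeleton --- substitute the series, decompose over $P_H(k)\backslash H(k)/G(k)$, dispose of the degenerate orbits, then unfold the open orbit along the one-dimensional quotient $U/U^{\mu}$ --- is the same as the paper's, and your description of the open-orbit step (stabilizer $T^{\mu}U^{\mu}$ with $U^{\mu}$ of codimension one in $U$, Fourier expansion producing the split character $\Psi_s$ together with the partial Fourier transform defining $F^{*}$) is essentially what the paper does.

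The genuine gap is in your vanishing criterion for the non-open orbits. You propose that an orbit dies whenever its stabilizer contains the unipotent radical of a proper parabolic of $G$, by vanishing of the corresponding constant term; but this only handles the closed orbit $\mu=1$, whose stabilizer is $P=MU$. There are five $G(k)$-orbits, and the three intermediate ones, represented by $w_2w_1,\, w_2w_3,\, w_2w_4$, have stabilizer $LR$, where $R=[V,V]$ is the three-dimensional commutator of the unipotent radical $V$ of the other maximal parabolic $Q=LV$. The unipotent part $N_\beta R$ of $LR$ is only four-dimensional, so $LR$ contains no unipotent radical of a proper parabolic of $G$, and cuspidality via constant terms does not make these terms vanish; under your criterion four orbits, not one, would survive the bookkeeping. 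The paper kills these three orbits by a different input: Rallis's identity (\cite[Theorem 5]{MR1020830}, equation (4.1) of the paper), which expresses the period of a cusp form over $R$ as a sum of Whittaker coefficients over $N_\beta(k)\backslash L(k)$; collapsing that sum with the outer integration leaves the factor $\int_{N_\beta(k)\backslash N_\beta(\A)}\psi(n)\,dn=0$. Without this step (or an equivalent argument), your proof does not reduce $\zint\bk{s,\varphi,f}$ to the open-orbit contribution, so the identity in the theorem is not yet established.
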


This computation  was performed by Dihua Jiang,
 but since his proof was never published we include it in \autoref{s_unfolding}.

The function $F^*\bk{g,s}$ is factorizable whenever the involved section $f_s$ is. In particular 
\[
F^*\bk{g,s}=\Pi_\nu F^*_\nu\bk{g_\nu,s}, \quad \text{where} \quad F_\nu^*\bk{g,s}=j_\nu\bk{s} \int\limits_{k_\nu}  f_{s,\nu}\bk{\mu x_{\alpha+\beta}\bk{r} g} \psi_\nu\bk{r} dr \ ,
\]
and for almost all places $f_{s,\nu}=f_{s,\nu}^0\in Ind^{H\bk{k_\nu}}_{P_H\bk{k_\nu}}\delta_{P_H}^s$ is a  spherical vector with $f_{s,\nu}^0\bk{1}=1$.

Note that as the space $\Hom_{U(\A)}\bk{\pi, \C_{\Psi_s}}$ is usually infinite dimensional the functional $L_{\Psi_s}$ is not necessarily factorizable. Nevertheless it will be shown that the
integral $\mathcal{Z}\bk{\varphi,f,s}$ is factorizable. The factorizabily of the integral follows from the next surprising local statement, that replaces the unramified computation.

\begin{Thm}[Unramified Computation]
\label{thm:unramified}
Let $\pi_\nu$ be an irreducible unramified representation of $G\bk{k_\nu}$ and let $v^0$ be a fixed spherical vector in $\pi_\nu$. Assume that $\Hom_{U\bk{k_\nu}}\bk{\pi_\nu, \C_{\Psi_{s,\nu}}}\neq 0$. There exists $s_0\in\R$ such that for any $\Real{s}\more{s_0}$ and {\bf any} $l\in \Hom_{U\bk{k_\nu}}\bk{\pi_\nu, \C_{\Psi_{s,\nu}}}$ it holds
\begin{equation}
\label{unramified}
\int\limits_{U\bk{k_\nu}\backslash G\bk{k_\nu}} l\bk{\pi_v\bk{g} v^0} F^*_v\bk{g,s} dg =\Lfun\bk{5s-2,\pi_\nu,\st} l\bk{v^0} \ ,
\end{equation}
where $F_\nu^*\bk{g,s}$ is the function corresponding to the normalized spherical section $f^0_\nu$.
\end{Thm}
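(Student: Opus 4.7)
The plan is to evaluate the integral in \eqref{unramified} by a Hecke-algebra strategy, with the novelty being an argument that sidesteps the explicit construction of a generating function for $\Lfun\bk{5s-2,\pi_\nu,\st}$.

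First, I would exploit the Iwasawa decomposition $G\bk{k_\nu}=U\bk{k_\nu}M\bk{k_\nu}K_\nu$ associated to the Heisenberg parabolic $P=MU$, together with the right $K_\nu$-invariance of $v^0$ and $F^*_\nu\bk{\cdot,s}$, to reduce the left-hand side of \eqref{unramified} to an integral over $M\bk{k_\nu}/\bk{M\bk{k_\nu}\cap K_\nu}\simeq GL_2\bk{k_\nu}/GL_2\bk{\mO_\nu}$ equipped with the modulus factor $\delta_P^{-1}$. From the definition of $F^*_\nu$ as an integral of the spherical $P_H$-induced section against $\psi_\nu$, the restriction of $F^*_\nu$ to $M$ can be computed as an iterated Tate integral compatible with the normalizing factor $j_\nu\bk{s}=\zeta_\nu\bk{5s}\zeta_\nu\bk{5s-1}^2\zeta_\nu\bk{10s-4}$. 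One also verifies that $F^*_\nu\in\mathcal{M}^0_{\Psi_{s,\nu}}$, so that the hidden $S_{\Psi_s}\bk{k_\nu}\simeq S_3$-invariance is available in subsequent manipulations.

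Conceptually, the heart of the argument is the following. If one could find $h_s$ in a completion of the spherical Hecke algebra $\Hecke_\nu$ with $F^*_\nu\bk{\cdot,s}=h_s^{\overline{\Psi_s}}$, then a straightforward unfolding would give
\[
\int\limits_{U\bk{k_\nu}\backslash G\bk{k_\nu}} l\bk{\pi_\nu\bk{g}v^0}\, F^*_\nu\bk{g,s}\, dg
= \int\limits_{G\bk{k_\nu}} l\bk{\pi_\nu\bk{g}v^0}\, h_s\bk{g}\, dg
= l\bk{\pi_\nu\bk{h_s}v^0}
= \widehat{h_s}\bk{t_{\pi_\nu}}\, l\bk{v^0},
\]
and the unramified identity would reduce to showing $\widehat{h_s}\bk{t_{\pi_\nu}}=\Lfun\bk{5s-2,\pi_\nu,\st}$. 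Producing $h_s$ explicitly is exactly the generating-function problem, hitherto open for the standard $\Lfun$-function of $G_2$. The strategy here is to bypass it: argue that the linear functional
\[
l\mapsto L_l\bk{F^*}:=\int\limits_{U\bk{k_\nu}\backslash G\bk{k_\nu}} l\bk{\pi_\nu\bk{g}v^0}\, F^*_\nu\bk{g,s}\, dg
\]
on $\Hom_{U\bk{k_\nu}}\bk{\pi_\nu,\C_{\Psi_{s,\nu}}}$ is, by a rigidity argument exploiting the $K_\nu$-sphericity of $v^0$ and the Hecke-eigenvector identities $l\bk{\pi_\nu\bk{h}v^0}=\widehat{h}\bk{t_{\pi_\nu}}l\bk{v^0}$ for $h\in\Hecke_\nu$, necessarily proportional to the evaluation functional $l\mapsto l\bk{v^0}$, with a scalar $C\bk{s,\pi_\nu}$ depending only on $\pi_\nu$ and $s$ but not on $l$.

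Finally, the scalar $C\bk{s,\pi_\nu}$ must be identified with $\Lfun\bk{5s-2,\pi_\nu,\st}$. Because $C$ is independent of $l$, one can test it against a generic unramified principal series where the Whittaker model is unique; there the integral is computable by a Casselman--Shalika-type manipulation on $M\simeq GL_2$, yielding the standard $\Lfun$-factor. Since both $C\bk{s,\pi_\nu}$ and $\Lfun\bk{5s-2,\pi_\nu,\st}$ are rational functions of the Satake parameters of $\pi_\nu$, the identity extends to all unramified $\pi_\nu$ supporting the $\bk{U,\Psi_s}$ model. The main obstacle is the rigidity step of the previous paragraph: establishing $L_l\bk{F^*}=C\bk{s,\pi_\nu}\,l\bk{v^0}$ for an arbitrary, possibly non-factorizable $l$ is precisely the novelty that \emph{replaces} the unramified computation in the classical generating-function framework, and carrying it out without naming $h_s$ is the technical heart of the paper.
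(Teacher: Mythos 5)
Your plan correctly identifies the high-level goal (show that integrating $F^*$ against $l(\pi(g)v^0)$ produces $\Lfun(5s-2,\pi,\st)\,l(v^0)$ for \emph{every} functional $l$, not just a fixed factorizable one) and correctly recalls the familiar mechanism whereby Fourier transforms of Hecke elements fold back through spherical vectors. But the central step in your argument is asserted, not proved, and it is precisely the part of the theorem that is hard.

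You claim that ``by a rigidity argument exploiting the $K_\nu$-sphericity of $v^0$ and the Hecke-eigenvector identities'' the functional $l\mapsto L_l(F^*)=\int_{U\backslash G}l(\pi(g)v^0)F^*(g,s)\,dg$ must be proportional to $l\mapsto l(v^0)$. No such rigidity holds for an arbitrary $F\in\mathcal{M}^0_{\Psi_s}$: for a general $U$-equivariant, right $K$-invariant $F$, two functionals $l_1, l_2$ with $l_1(v^0)=l_2(v^0)$ can give $L_{l_1}(F)\ne L_{l_2}(F)$. The Hecke-eigenvector identity $l(\pi(h)v^0)=\hat h(\pi)l(v^0)$ only applies to $h\in\Hecke_\nu$, and $F^*$ is \emph{not a priori} a Fourier transform $h^{\Psi_s}$ of any such $h$ (or a convergent limit thereof). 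Establishing this is exactly what the theorem requires: it is equivalent to the ``basic identity'' $\Delta^{\Psi_s}(\cdot,5s-2)=F^*(\cdot,s)$ for the (unknown, non-explicit) generating function $\Delta\in\Hecke[[q^{-s}]]$. Without supplying a mechanism to prove this, the proposal has assumed the conclusion.

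The paper's route, by contrast, actually produces the required fact by a convolution trick: it introduces the bi-$K$-invariant auxiliary function $D(t,s)=|\omega_1(t)|^{5s+1}$, proves $D=\Delta(\cdot,5s-2)*P$ via Macdonald's formula for spherical functions (so $D$ is manifestly a completed Hecke element), establishes by explicit computation that $D^{\Psi_s}=F^**P$, and shows that $P$ is invertible in a $C^*$-completion of $\Hecke$. This forces $F^*=\Delta^{\Psi_s}(\cdot,5s-2)$, which is the rigidity you wanted but did not prove. Your final step is also off: you propose to ``test against a generic unramified principal series where the Whittaker model is unique,'' but the relevant functionals lie in $\Hom_U(\pi,\C_{\Psi_s})$ for the \emph{degenerate} split character $\Psi_s$ of the Heisenberg radical $U$, which is never one-dimensional -- the non-uniqueness of this model is the entire point of the construction -- so no such uniqueness test is available.
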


The identity in the main theorem follows from \autoref{unramified} using standard argument as in \cite{MR965059}. For the sake of completeness of presentation the argument is included in \autoref{PSRderive}. This argument also defines $d_S\bk{s,\varphi_S,f_S}$ explicitly. 

The proof of \autoref{thm:unramified} is the most non-trivial part of the paper and can be found in \autoref{sec:unramified}. In fact the proof is quite amusing. Following the ideas of \cite{MR965059} it boils down to proving the identity between $F_s$ and a Fourier transform of the generating function $\Delta$ of $L\bk{s,\pi,\st}$. We could not find the explicit formula for $\Delta$, which must be very complicated. Instead we have proven that the two functions become equal after being convolved with a third function. Both sides are evaluated explicitly (\autoref{App:Computing F(,s)} and \autoref{App:Computing D-psi and convolution}). Finally we show in \autoref{invertible} that the latter convolution is in fact an invertible operation.
\vspace{10pt}

\begin{Thm}[Ramified Computation]
\label{thm:ramified}
For any $s_0\in\C$ there exist datum $\varphi_S$ and $f_S$ such that $d_S\bk{s_0,\varphi_S,f_S}$ is holomorphic and non vanishing in a neighbourhood of $s_0$.
\end{Thm}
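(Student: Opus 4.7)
The plan is to exploit the classical principle that by varying the ramified local sections one can force the integrand of $d_S$ to concentrate near a point where the (non-factorizable) global Fourier coefficient $L_{\Psi_s}(\varphi)$ is known to be non-zero, following the strategy originating in \cite{MR965059}.

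First, I would use the explicit expression for $d_S$ that is derived in \autoref{PSRderive}. After combining \autoref{thm: unfolding} with \autoref{thm:unramified}, the function $d_S(s,\varphi_S,f_S)$ becomes an integral over $U(k_S)\backslash G(k_S)$ of $L_{\Psi_s}(\varphi)$ (restricted along the $S$-adic points) against $\prod_{\nu\in S} F^*_\nu(g_\nu,s)$, with the unramified variables already integrated into the Euler product $\Lfun^S(s,\pi,\st)$.

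Second, since $\pi$ supports the split Fourier coefficient, there exists a smooth vector $\varphi\in\pi$ with $L_{\Psi_s}(\varphi)\not\equiv 0$. By smoothness, the map $g\mapsto L_{\Psi_s}(\varphi)(g)$ is continuous and nowhere vanishing on some open set $\Omega\subset G(\A)$, and is therefore bounded below in absolute value on any compact subset of $\Omega$. By right-translation invariance of the setup we can arrange that $\Omega$ contains points of the form $g^\circ\cdot g^S$ with $g^\circ\in G(k_S)$.

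Third, for each $\nu\in S$ I would choose $f_{s,\nu}$ to be a holomorphic section supported on a translate of $P_H(k_\nu)\cdot\mu\cdot V$, where $V$ is a small neighborhood of the identity in $H(k_\nu)$, with the $P_H$-equivariance imposed via a factor of $\delta_{P_H}^s$. Because $P_H\mu U$ is an open Bruhat-type cell inside $H$, such sections are well-defined and unambiguous, and the corresponding function $F^*_\nu(\cdot,s)$ is a smooth bump on $U(k_\nu)\backslash G(k_\nu)$, supported in an arbitrarily small neighborhood of a prescribed point of our choice, of constant sign, with nonzero integral, and of entire (indeed exponential-in-$s$) dependence on the spectral parameter.

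Fourth, arranging these bumps so that their product sits inside $\Omega$, the integrand of $d_S(s_0,\varphi_S,f_S)$ is continuous and of constant sign on its support, so the integral is non-zero at $s_0$; holomorphy in a neighborhood of $s_0$ is then immediate from dominated convergence since the $s$-dependence of the chosen sections factors through the entire function $\delta_{P_H}^s$. The main obstacle is the third step, namely verifying that one can indeed realize $F^*_\nu$ as a genuine bump function at an arbitrary point with controlled holomorphic $s$-dependence. This reduces to checking that $P_H\mu U$ is open in $H$ and that the partial Fourier transform along the one-parameter unipotent $x_{\alpha+\beta}(r)$ against $\psi_\nu$ preserves bump functions, both of which follow from a direct analysis of the Bruhat decomposition of $H=\Spin_8$ restricted to the relevant cell.
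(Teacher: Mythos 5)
There is a genuine gap, and it sits exactly in your third step. A standard section $f_{s,\nu}\in \Ind^{H\bk{k_\nu}}_{P_H\bk{k_\nu}}\delta_{P_H}^s$ is forced to be fully left $P_H\bk{k_\nu}$-equivariant, and the stabilizer of the open orbit point $\mu$ contains the one-dimensional torus $T^\mu=\set{h_{3\alpha+2\beta}\bk{t}}$, i.e.\ $\mu\, t\,\mu^{-1}\in P_H$ for $t\in T^\mu$. Consequently $F^*_\nu$ satisfies $F^*_\nu\bk{t g,s}=\FNorm{t}^{5s}F^*_\nu\bk{g,s}$ (up to the modulus shift coming from conjugating $x_{\alpha+\beta}\bk{r}$ past $t$), so its support in $U\bk{k_\nu}\backslash G\bk{k_\nu}$ necessarily contains entire noncompact $T^\mu$-orbits: no choice of $V$ can cut it off in that direction, because on the open cell the section is only free modulo the $P_H$-action. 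Hence $F^*_\nu$ is never a compactly supported bump at a prescribed point, and your "constant sign, nonzero integral, holomorphy by dominated convergence" conclusions do not follow. Concretely, after using the support of the section the local integral collapses not to a bump integral but to a Mellin transform $\int_{k_\nu^\times} l\bk{\pi_\nu\bk{h_{\omega_2}\bk{t}}v}\FNorm{t}^{5s}\,d^\times t$, whose convergence, holomorphy and nonvanishing at an arbitrary prescribed $s_0$ are precisely the problem; it can vanish or fail to converge for a fixed smooth $v$. The obstacle you flag (openness of the cell, Fourier transform of bumps) is not where the difficulty lies.

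The missing idea, which is how the paper proceeds, is to do the localization on the vector side rather than the section side: replace $v$ by $\phi\ast v=\int\phi\bk{r}\pi\bk{x_{2\alpha+\beta}\bk{r}}v\,dr$ for a Schwartz function $\phi$. Since $T^\mu$ acts on the root space of $2\alpha+\beta$ by scaling, this multiplies the Mellin integrand by $\hat{\phi}\bk{t}$, and choosing $\hat{\phi}=\Id_{1+\varpi^m\mO}$ (resp.\ $\hat{\phi}$ a bump near $t=1$ at the archimedean place) truncates the torus integral to a compact neighbourhood of $t=1$, giving exactly $l\bk{v}$ at each finite place --- independent of $s$ and valid for \emph{every} $l\in\Hom_{U\bk{k_\nu}}\bk{\pi_\nu,\C_{\Psi_s}}$ --- and an analytic, nonvanishing $d_\infty$ near $s_0$. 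The quantifier "every functional" is not cosmetic: because $L_{\Psi_s}$ is not factorizable, one cannot fix a nonvanishing point $g_0$ and argue place by place as in your steps 2 and 4; instead the finite places of $S$ are peeled off one at a time by applying Proposition~\ref{Prop:ramified non-arch} to the functional obtained from $L_{\Psi_s}$ by freezing the variables at the remaining places, reducing everything to the archimedean factor. Your proposal therefore needs both the $\phi\ast v$ multiplier trick and this "any functional" formulation to close the argument.
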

This theorem is proven in \autoref{ramified}.

\section{Unfolding}
\label{s_unfolding}
The proof of \autoref{thm: unfolding} is fairly standard. First we introduce some more notations that will be used in this section and also in \autoref{ramified}

Denote by $Q=LV$ the maximal parabolic subgroup of $G$ other than $P$. The Levi part $L\simeq GL_2$ is generated by the root $\beta$. The unipotent radical of the Borel subgroup of $L$ will be denoted by $N_\beta$. The unipotent radical $V$ of $Q$ is a three-step nilpotent group. Denote its commutator $\coset{V,V}$ by $R$.  It is generated by the subgroups $x_{2\alpha+\beta}, x_{3\alpha+\beta}$ and $x_{3\alpha+2\beta}$.

The following fact will be used (\cite[Theorem 5]{MR1020830}):
\begin{equation}
\label{rallis_equation}
\integral{R} \varphi\bk{rg} dr=\sum_{\nu \in N_\beta\bk{k}\backslash L\bk{k}} W_\psi\bk{\varphi}\bk{\nu g},
\end{equation}
where $W_\psi\bk{\varphi}$ is the standard Whittaker coefficient.

There are five $G\bk{k}$-orbits of $P_H\bk{k}\backslash H\bk{k}$. The representatives
of the orbits and their stabilizers are given in the next Lemma \cite[Lemma 2.1]{MR1617425}:
\begin{Lem}
The following is a list of representatives of $G\bk{k}$-orbits in	 $P_H\bk{k}\backslash H\bk{k}$ and their stabilizers:
\begin{enumerate}
\item $\mu=1$, and the stabilizer $G^\mu=P$.
\item $\mu=w_2w_1, w_2w_3, w_2w_4,$ and the stablizer $G^\mu=LR$.
\item $\mu= w_2w_3x_{-\alpha_1}(1)$ is a representative of the open orbit. The stabilizer of $P_H\bk{k}\mu G\bk{k}$ is $G^\mu=T^\mu\cdot U^\mu$ where
\[
T^\mu=\set{h_{3\alpha+2\beta}\bk{t}\mvert t\in k^\times},\quad
U^\mu=\set{u\bk{r_1,r_2,r_2,r_4,r_5}\mvert r_i \in k}
\]
\end{enumerate}
\end{Lem}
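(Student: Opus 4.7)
The proof classifies the $G(k)$-orbits on the 9-dimensional variety $X = P_H(k)\backslash H(k)$ and computes the $G$-stabilizer of each representative. My approach combines a Bruhat-style exhaustiveness argument with explicit root-subgroup calculations, leveraging the fact that $G \subset H$ is the fixed-point subgroup of the outer triality $S_3$, so that $G \cap P_H = P$ and the $G$-action on $X$ is compatible with the triality permutation of $\alpha_1, \alpha_3, \alpha_4$ (while fixing $\alpha_2$).

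For exhaustiveness, I would decompose $H$ using the Bruhat decomposition relative to $P_H$ on the left and a Borel $B_G \subset P$ on the right. Each element of $X$ then has a representative of the form $w \cdot u$ with $w$ running over a set of $W_{M_H}$-coset representatives in $W_H$ and $u$ in the opposite unipotent of $B_G$; collapsing by $G$ and grouping via triality leaves the five candidates in the lemma. Computing the stabilizers in the first four cases is then straightforward: for $\mu = 1$ we obtain $G^{1} = G \cap P_H = P$ by definition, and for $\mu = w_2 w_i$ one checks root by root which $G_2$-root subgroups $x_\gamma$ satisfy $(w_2 w_i) \cdot x_\gamma(r) \cdot (w_2 w_i)^{-1} \in P_H$, arriving at the description $G^\mu = LR$ of dimension $4 + 3 = 7$. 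The triality $S_3$ then implies that the three choices $i = 1,3,4$ produce three distinct but isomorphic orbits.

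The main obstacle is the open orbit representative $\mu = w_2 w_3 x_{-\alpha_1}(1)$, where $\mu^{-1} P_H \mu$ is no longer a standard parabolic of $H$. For this case I would compute $\mu \cdot x_\gamma(r) \cdot \mu^{-1}$ for each $G_2$-root $\gamma$ using the commutator formulas in $H$, and detect which linear combinations of the $U$-coordinates $(r_1,\ldots,r_5)$ yield an element of $P_H$. The constraint $r_2 = r_3$ in the description of $U^\mu$ should emerge as the relation forced by the coupled corrections produced by the extra factor $x_{-\alpha_1}(1)$ on the root groups $x_{\alpha+\beta}$ and $x_{2\alpha+\beta}$, whose conjugates only drop into the unipotent radical $U_H$ when the two coordinates cancel one another; the one-parameter torus $T^\mu = \{h_{3\alpha+2\beta}(t) : t \in k^\times\}$ should appear as the unique $G_2$-cocharacter normalizing $\mu^{-1} P_H \mu$. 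A final dimension count, $\dim G^\mu = 1 + 4 = 5$ giving orbit dimension $14 - 5 = 9 = \dim X$, confirms that this orbit is open and that the five orbits of dimensions $5, 7, 7, 7, 9$ exhaust $X$ and are pairwise distinct.
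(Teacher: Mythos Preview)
The paper does not prove this lemma at all; it is quoted verbatim from another source, cited in the text as \cite[Lemma 2.1]{MR1617425}. So there is no in-paper argument to compare your proposal against.

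That said, your outline is the standard approach and the numerics are correct: $\dim X = 9$, and the stabilizers $P$, $LR$, $T^\mu U^\mu$ have dimensions $9,7,5$ giving orbit dimensions $5,7,7,7,9$. Two small cautions if you actually carry it out. First, your use of triality is slightly inverted: since triality fixes both $G$ and $P_H$, it acts on $X$ and \emph{permutes} the three cosets $P_H w_2 w_i$ for $i=1,3,4$; this shows the three orbits are isomorphic as $G$-varieties, but it does not by itself show they are \emph{distinct} subsets of $X$ --- for that you still need a direct check that, say, $w_2 w_1 \notin P_H\, w_2 w_3\, G$. Second, the open-orbit stabilizer is where all the content is, and your language there (``should emerge'', ``should appear'') flags exactly the step that requires an honest computation with the $\Spin_8$ commutator relations; the constraint $r_2=r_3$ does come out of the interaction of $x_{-\alpha_1}(1)$ with the conjugates of $x_{\alpha+\beta}$ and $x_{2\alpha+\beta}$, but it is a calculation, not a heuristic.
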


\begin{proof}[Proof of \autoref{thm: unfolding}]
For $\Real\bk{s}>>0$ it holds
\[
\integral{G} \varphi\bk{g} E\bk{g,f_s} dg= \integral{G} \varphi\bk{g} \sum_{\gamma \in P_H\bk{k}\backslash H\bk{k}} f_s\bk{\gamma g}dg= \sum_{\mu \in P_H\bk{k}\backslash H\bk{k}/G\bk{k}} I_\mu\bk{\varphi,f_s}\ ,
\]
where
\[
I_\mu\bk{\varphi,f_s}=\int\limits_{G^\mu\bk{k}\backslash G\bk{\A}} \varphi\bk{g}f_s\bk{\mu g}\, dg \ .
\]

Next we show that $I_\mu\bk{\varphi,f_s}=0$ unless $\mu$ is a representative of the open orbit.
\begin{enumerate}
\item
$\mu=1$. Then
\[
I_\mu\bk{\varphi,f_s}=\int\limits_{P\bk{k}\backslash G\bk{\A}} \varphi\bk{g}f_s\bk{g}dg=
\int\limits_{M\bk{k}U\bk{\A}\backslash G\bk{\A}} f_s\bk{g}\integral{U} \varphi\bk{ug}\, du\, dg=0 \ ,
\]
since $\varphi$ is cuspidal.

\item $\mu=w_2w_1, w_2w_3, w_2w_4.$ Then
\[
I_\mu\bk{\varphi,f_s}=\int\limits_{L\bk{k}R\bk{k}\backslash G\bk{\A}} \varphi\bk{g}f_s\bk{\mu g}\, dg=\int\limits_{L\bk{k}R\bk{\A}\backslash G\bk{\A}}f_s\bk{\mu g}\integral{R} \varphi\bk{rg}\, dr\, dg \ .
\]
Using \autoref{rallis_equation} this equals
\begin{align*}
&\int\limits_{L\bk{k}R\bk{\A}\backslash G\bk{\A}}f_s\bk{\mu g}
\sum_{\nu \in N_\beta\bk{k}\backslash L\bk{k}} W_\psi\bk{\varphi}\bk{\nu g}=\\ 
&\int\limits_{N_\beta\bk{\A}R\bk{\A}\backslash G\bk{\A}} f_s\bk{\mu g} W_\psi\bk{\varphi}\bk{g}
\bk{\integral{N_\beta} \psi\bk{n} \,dn} \,dg=0  \ .
\end{align*}
\end{enumerate}

Now let us compute the contribution from the open orbit.

For $\mu=w_2w_3 x_{-\alpha_1}\bk{1}$ it holds

\[
I_\mu\bk{\varphi,f}=\int\limits_{T^\mu\bk{k}U^\mu\bk{\A}\backslash G\bk{\A}}
\bk{\integral{U^\mu} \varphi\bk{ug}\, du}\, f_s(\mu g)\, dg \ .
\]
Expanding the function given by an inner integral along the root $\alpha+\beta$ and collapsing the sum with the outer integration the above equals
\begin{equation}
\label{eulerian_unfolding}
\int\limits_{U^\mu\bk{\A}\backslash G\bk{\A}} \integral{U} \varphi\bk{ug} \overline{\Psi_s\bk{u}}\, du\, f_s\bk{\mu g}\, dg \ .
\end{equation}

Since $U=U^\mu \cdot x_{\alpha+\beta}$ we bring the integral to its final form
\begin{equation}
\label{finite_eulerian_unfolding}
\int\limits_{U\bk{\A}\backslash G\bk{\A}} \integral{U} \varphi\bk{ug} \overline{\Psi_s \bk{u}}\, du\, 
\int\limits_{\A}f_s\bk{\mu x_{\alpha+\beta}\bk{r}g} \psi\bk{r}\, dr \, dg= \int\limits_{U\bk{\A}\backslash G\bk{\A}} L_{\Psi_s}\bk{\varphi}\bk{g} F\bk{g,s}\, dg \ .
\end{equation}
\end{proof}

\section[Derivation of The Main Theorem]{Derivation of The Main Theorem from Theorems \ref{thm:unramified} and \ref{thm:ramified}}
\label{PSRderive}
\begin{proof}[Proof of \autoref{thm:Main}]
By definition
\begin{equation}
\label{eq:Integral as a limit of integrals}
\zint\bk{s,\varphi,f}=
\mathop{\mathop{\lim_{\longrightarrow}}_{S\subset\Omega\subset{\Places}}}_{\Card{\Omega}<\infty}
\int_{U\bk{\A}_{\Omega}\setminus G\bk{\A}_{\Omega}} L_{\Psi_s}\bk{\varphi}\bk{g}F^\ast\bk{g,s}\, dg \ ,
\end{equation}
where $\displaystyle G\bk{\A}_{\Omega}=\prod_{\nu\in\Omega}G\bk{k_{\nu}}$. Fix $s_0\in\R$ such that the right hand side of \autoref{eq:unfolded integral} converges for $\Real{s}\more{s_0}$. The integrals of the right hand side of \autoref{eq:Integral as a limit of integrals} must also converge there. Also fix $s_1\in\R$ such that \autoref{unramified} holds for $\Real{s}\more{s_1}$. For a finite set $S\subseteq\Omega$ and $\nu\notin\Omega$ we have
\begin{align*}
&\int_{U\bk{\A}_{\Omega\cup\{\nu\}}\setminus G\bk{\A}_{\Omega\cup\{\nu\}}} L_{\Psi_s}\bk{\varphi}\bk{g}F^\ast\bk{g,s}\, dg = \\
=& \int_{U\bk{\A}_{\Omega}\setminus G\bk{\A}_{\Omega}} \int_{U\bk{k_{\nu}}\setminus G\bk{k_{\nu}}} L_{\Psi_s}\bk{\varphi}\bk{g g_{\nu}}F^\ast\bk{g g_{\nu},s}\, dg_{\nu}\, dg= \\
=& \int_{U\bk{\A}_{\Omega}\setminus G\bk{\A}_{\Omega}} F^\ast\bk{g,s} \int_{U\bk{k_{\nu}}\setminus G\bk{k_{\nu}}} L_{\Psi_s}\bk{\varphi}\bk{g g_{\nu}}F^\ast\bk{g_{\nu},s}\, dg_{\nu}\, dg= \\
=&\Lfun\bk{5s-2,\pi_{\nu},\st} 
\int_{U\bk{\A}_{\Omega}\setminus G\bk{\A}_{\Omega}} L_{\Psi_s}\bk{\varphi}\bk{g} F^\ast\bk{g,s}\, dg \ ,
\end{align*}
where the last equality is due to \autoref{thm:unramified}. A priori the last equality holds only
 for $\Real{s}\more\max\set{s_0,s_1}$, but since $\Lfun\bk{5s-2,\pi_{\nu},\st}$ is a meromorphic 
function the equality actually holds for $\Real{s}\more{s_0}$. Plugging this into 
\autoref{eq:Integral as a limit of integrals} we get
\begin{align*}
\zint\bk{s,\varphi,f}&=
\mathop{\mathop{\lim_{\longrightarrow}}_{S\subset\Omega\subset{\Places}}}_{\Card{\Omega}<\infty} 
\int_{U\bk{\A}_{\Omega}\setminus G\bk{\A}_{\Omega}} L_{\Psi_s}\bk{\varphi}\bk{g}F^\ast\bk{g,s}\, dg= \\
&= \mathop{\mathop{\lim_{\longrightarrow}}_{S\subset\Omega\subset{\Places}}}_{\Card{\Omega}<\infty} 
\prod_{\nu\in{\Omega\setminus{S}}} \Lfun\bk{5s-2,\pi_{\nu},\st} 
\int_{U\bk{\A}_{S}\setminus G\bk{\A}_{S}} L_{\Psi_s}\bk{\varphi}\bk{g}F^\ast\bk{g,s}\, dg=\\
&=\Lfun^{S}\bk{5s-2,\pi,\st} \int_{U\bk{\A}_{S}\setminus G\bk{\A}_{S}} 
L_{\Psi_s}\bk{\varphi}\bk{g}F^\ast\bk{g,s}\, dg \ .
\end{align*}

We finish the proof by fixing our datum according to \autoref{thm:ramified} and taking
\[
d_{S}\bk{s,{\varphi}_S,f_{S}}=\int_{U\bk{\A}_S\setminus G\bk{\A}_S} L_{\Psi_s}\bk{\varphi}\bk{g}F^\ast\bk{g,s}\, dg \ .
\]
\end{proof}

\section{Unramified Computation}
\label{sec:unramified}
Let $F=k_{\nu}$ with the ring of integers $\mathcal O$ and uniformizer $\varpi$  for some $\nu\notin{S}$. By abuse of notations we denote in this section, and in \autoref{App:Computing D-psi and convolution} and \autoref{App:Computing F(,s)}, $\pi$ for $\pi_{\nu}$, $\psi$ for $\psi_{\nu}$ etc. In this section we prove \autoref{thm:unramified}.
Recall that $G\bk{F}$ contains the maximal compact subgroup $K=G\bk{\mO}$. We fix on $G$ the Haar measure $\mu$ such that $\mu\bk{K}=1$. 

Recall that the Satake isomorphism is an isomorphism of $\C$-algebras $\Hecke \cong Rep\bk{{^L}G}$. Denote by $A_j\in\Hecke$ the elements corresponding to $Sym^j\bk{\st}$ by the Satake isomorphism. In particular for any unramified representation $\pi$ and a $K$-invariant vector $v^0\in\pi$ it holds
\begin{equation}
\label{Aj:definition}
\int_G A_j\bk{g} \pi\bk{g} v^0\, dg= tr\bk{Sym^j\bk{\st}}\bk{t_\pi} v^0,
\end{equation}
where $t_\pi$ is the Satake parameter of $\pi$.

For any unramified representation $\pi$ the Satake isomorphism induces an algebra homomorphism that sends  $f\in\Hecke$ to the complex number $\hat{f}\bk{\pi}$ such that
\[
\int_G f\bk{g} \pi\bk{g} v^0\, dg = \hat{f}\bk{\pi} v^0 \ .
\]
In particular for $f_1,f_2 \in \Hecke$ it holds $\widehat{f_1\ast f_2}=\hat{f_1} \cdot \hat{f_2}$. The homomorphism $f\rightarrow \hat f\bk{\pi}$ can be extended linearly to a map $\Hecke\coset{\coset{q^{-s}}} \rightarrow \C\coset{\coset{q^{-s}}}$.

\begin{Lem}[Poincar\' e identity]
There exists a generating function $\Delta\bk{g,s}\in\Hecke\coset{\coset{q^{-s}}}$ such that for any unramified representation $\pi$ with a spherical vector $v^0$ and {\bf any} functional $l$ on $\pi$ it holds
\begin{equation}
\label{eq:Poincare identity}
\int_{G} \Delta\bk{g,s} l\bk{\pi\bk{g}v^{0}}dg= \Lfun\bk{s,\pi,\st} l\bk{v^{0}} \,
\end{equation}
for $\Real{s}>>0$
\end{Lem}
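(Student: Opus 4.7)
The plan is to construct $\Delta(g,s)$ explicitly as the generating series of the Hecke elements $A_{j}$ associated by the Satake isomorphism to the symmetric powers of $\st$. Set
\[
\Delta(g,s):=\sum_{j=0}^{\infty}A_{j}(g)\,q^{-js}\in\Hecke[[q^{-s}]].
\]
First I would verify that the Satake transform of this formal series is exactly $\Lfun\bk{s,\pi,\st}$. Expanding the geometric series
\[
\Lfun\bk{s,\pi,\st}=\frac{1}{\det\bk{I-\st\bk{t_{\pi}}q^{-s}}}=\sum_{j=0}^{\infty}tr\bk{Sym^{j}\bk{\st}}\bk{t_{\pi}}\,q^{-js},
\]
and recalling from \autoref{Aj:definition} that $\widehat{A_{j}}\bk{\pi}=tr\bk{Sym^{j}\bk{\st}}\bk{t_{\pi}}$, one obtains $\widehat{\Delta(\cdot,s)}\bk{\pi}=\Lfun\bk{s,\pi,\st}$ as a formal power series identity in $q^{-s}$.

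Next I would upgrade this to the analytic identity \autoref{eq:Poincare identity}. Each $A_{j}$ has compact support, so the vector-valued integral $\int_{G}A_{j}(g)\pi(g)v^{0}\,dg$ is well defined and, by \autoref{Aj:definition}, equals $tr\bk{Sym^{j}\bk{\st}}\bk{t_{\pi}}\,v^{0}$. In particular it lies in the one-dimensional line $\C\cdot v^{0}$, so any linear functional $l$ commutes with this integration. Summing against $q^{-js}$ yields
\[
\sum_{j=0}^{\infty}q^{-js}\int_{G}A_{j}(g)\,l\bk{\pi(g)v^{0}}\,dg=l\bk{v^{0}}\sum_{j=0}^{\infty}q^{-js}\,tr\bk{Sym^{j}\bk{\st}}\bk{t_{\pi}}=\Lfun\bk{s,\pi,\st}\,l\bk{v^{0}},
\]
and interpreting the left-hand side as $\int_{G}\Delta(g,s)\,l\bk{\pi(g)v^{0}}\,dg$ produces \autoref{eq:Poincare identity}.

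The main obstacle is the joint convergence and the interchange of sum and integral that legitimize this rewriting. For $\Real(s)$ larger than a constant determined by the maximal absolute value of the eigenvalues of $\st(t_{\pi})$, the $L$-factor expansion converges absolutely, and the same bound combined with the compact support of each $A_{j}$ yields absolute convergence of the iterated expression. The conceptual point worth emphasizing is that $\Delta(g,s)$ acts on the spherical vector $v^{0}$ by the scalar $\Lfun\bk{s,\pi,\st}$; hence pulling any linear functional out of the resulting integral is automatic, which is precisely why \autoref{eq:Poincare identity} holds for \emph{every} $l$, with no factorizability hypothesis whatsoever.
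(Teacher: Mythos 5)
Your proposal is correct and follows essentially the same approach as the paper: define $\Delta(\cdot,s)=\sum_{j\ge0}A_jq^{-js}$, expand $\Lfun(s,\pi,\st)$ as $\sum_j tr(Sym^j(\st))(t_\pi)q^{-js}$, and match the two via \autoref{Aj:definition}. Your added remarks on the interchange of sum and integral and on why $l$ passes through the vector-valued integral (because it lands in $\C\cdot v^0$) are sound refinements of the same argument, not a different route.
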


\begin{proof}
We must show that there exists $\Delta$ with $\hat{\Delta} \bk{\pi,s}=\Lfun\bk{s,\pi,\st}$. The construction is formal.

\begin{align*}
\Lfun\bk{s,\pi,\st} &= \frac{1}{\det\bk{1-q^{-s}\st\bk{t_{\pi}}}}=
\prod_{i=1}^7 \bk{1-q^{-s}\st\bk{t_{\pi}}_{ii}}^{-1}= \\
&= \prod_{i=1}^7 \sum_{j=0}^{\infty} \bk{q^{-s}\st\bk{t_{\pi}}_{ii}}^{j} =
 \sum_{j=0}^{\infty} tr\bk{Sym^j\bk{\st\bk{t_{\pi}}}} q^{-js} \ ,
\end{align*}
where $t_{\pi}$ is the Satake parameter of $\pi$. The  series converge absolutely for $\Real{s}>>0$.
 Plugging \autoref{Aj:definition} into the previous equality gives
\[
\Lfun\bk{s,\pi,\st} l\bk{v^0}= 
\int_{G} \bk{\sum_{j=0}^{\infty} A_j  q^{-js}} \bk{g} l\bk{g\cdot v^0} dg \ .
\]
The assertion holds for $\Delta\bk{\cdot,s}=\sum_{j=0}^{\infty} A_j q^{-js}$ for any unramified representation $\pi$. Uniqueness follows from the fact that the action of the spherical functions of 
unramified representations gives rise to a spectral decomposition of $\Hecke$.
\end{proof}

For any $l\in \Hom_{U\bk{F}}\bk{\pi,\C_{\Psi_s}}$ one has
\[
\Lfun\bk{s,\pi,\st}\,l(v^0)=\int\limits_G l\bk{\pi(g)v^0} \Delta\bk{g,s}dg = \int\limits_{U\backslash G} l\bk{\pi\bk{g} v^0}\Delta^{\Psi_s}\bk{g,s}dg \ .
\]

Thus, in order to prove that for all unramified $\pi$ and all $l\in \Hom_U\bk{\pi,\Psi_s}$ \autoref{unramified} holds, it is enough to show the basic idenitity
\begin{equation}
\label{basic_identity}
\Delta^{\Psi_s}\bk{g,5s-2} = F^*\bk{g,s} \ .
\end{equation}
We prove this equality a priori only for $\Real{s}\more\more{0}$ but since for any $g\in{G}$ we know the right hand side to define a meromorphic function then this equality holds for all $s\in\C$. While the right hand side is given explicitly, we do not have an explicit formula for the generating function $\Delta\bk{g,s}$.

To overcome that difficulty we introduce the new function $D\in \Hecke\coset{\coset{q^{-s}}}$.
Recall the Cartan decomposition $G=KT^+K$ where  
$$T^{+}=\set{t\in{T}\mvert \FNorm{\gamma\bk{t}}\leq{1}\,\, \forall \gamma\in\Phi^{+}}.$$ 
The function $D$ is bi-$K$-invariant and is defined on the torus $T^+$ by
\[
D(t,s)=\FNorm{\omega_1(t)}^{5s+1} \qquad \forall t\in T^{+} \ 
\]
 The relation  between $D$ and $\Delta$ can be seen from the following proposition.

\begin{Prop} There exists $P\bk{g,s}\in \Hecke\coset{q^{-s}}$ and $s_0\in\R$ such that for $\Real{s}\more{s_0}$ it holds
\[
D\bk{\cdot,s}=\Delta\bk{\cdot, 5s-2}\ast P\bk{\cdot,s} \ .
\]
More precisely
\begin{equation}
P\bk{\cdot, s}= \frac{P_0\bk{q^{2-5s}}A_0-P_1\bk{q^{2-5s}}A_1}
{\zfun\bk{5s-1}\zfun\bk{5s+1}\zfun\bk{5s-2}},
\end{equation}
where
\[
P_0\bk{z}=
\frac{z^4}{q^2}+\bk{\frac{1}{q^2}+\frac{1}{q}}z^3+ \frac{z^2}{q}+\bk{\frac{1}{q}+1}z+1,
\quad 
P_1\bk{z}=\frac{z^2}{q} \ .
\]
\end{Prop}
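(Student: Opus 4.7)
My plan is to apply the Satake transform. Since the Satake isomorphism extends to a $\C[[q^{-s}]]$-algebra map that sends convolution to multiplication and is injective on $\Hecke$, the identity $D(\cdot,s) = \Delta(\cdot, 5s-2) \ast P(\cdot, s)$ is equivalent to the scalar identity
\[
\hat{D}(t_\pi, s) = \Lfun(5s-2, \pi, \st) \cdot \hat{P}(t_\pi, s)
\]
holding for every unramified $\pi$, where $\hat A_0 = 1$ and $\hat A_1(t_\pi) = \Tr(\st(t_\pi))$ by \autoref{Aj:definition}. The right-hand side is therefore the explicit meromorphic function dictated by the proposed formula for $P$, so it suffices to compute $\hat{D}$ and compare.

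For the computation I use the Cartan decomposition. Writing a dominant coweight as $\check\lambda = m\omega_1^\vee + n\omega_2^\vee$ with $m,n \geq 0$, one computes $\langle\omega_1, \check\lambda\rangle = 2m+n$, so $D(\varpi^{\check\lambda}, s) = q^{-(5s+1)(2m+n)}$ and
\[
\hat{D}(t_\pi, s) = \sum_{m,n\geq 0} q^{-(5s+1)(2m+n)} \,\mathrm{vol}(K\varpi^{\check\lambda} K)\,\phi_\pi(\varpi^{\check\lambda}),
\]
with $\phi_\pi$ the Macdonald spherical function for $G_2$. Plugging in Macdonald's formula --- a sum over the $12$-element Weyl group of $G_2$ with coefficients built from the $c$-function --- and evaluating the resulting geometric series term by term produces $\hat D(t_\pi, s)$ as a quotient of a Weyl-symmetric polynomial in $(t_\pi, q^{-s})$ by denominator factors that include $\zfun(5s-1)\zfun(5s+1)\zfun(5s-2)$.

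The final step is purely algebraic. Dividing the expression obtained for $\hat D$ by $\Lfun(5s-2, \pi, \st)$, I expect the quotient to collapse to a $\C[q^{-5s}]$-linear combination of $\hat A_0 = 1$ and $\hat A_1 = \Tr(\st(t_\pi))$ alone, the coefficients of which are, up to sign, $P_0(q^{2-5s})$ and $P_1(q^{2-5s})$ divided by $\zfun(5s-1)\zfun(5s+1)\zfun(5s-2)$. Matching these against the stated polynomials and invoking injectivity of the Satake transform identifies $P$ with the formula in the proposition; since $A_0, A_1 \in \Hecke$, the resulting $P$ lies in $\Hecke[[q^{-s}]]$.

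The hard part will be the Weyl-sum simplification. Summing Macdonald's formula explicitly over $W(G_2)$ and verifying the precise polynomial numerators $P_0, P_1$ is a long but mechanical calculation. The genuinely substantive content --- beyond bookkeeping --- is that after division by $\Lfun(5s-2,\pi,\st)$ no $\Tr(Sym^j\st(t_\pi))$ with $j\geq 2$ survives, so the quotient lies in the span of $A_0$ and $A_1$ only; this collapse reflects the special role played by the fundamental weight $\omega_1$ (equivalently by the parabolic $Q$ with $\delta_Q = |\omega_1|^5$) and is the combinatorial miracle underlying the clean final formula for $P$.
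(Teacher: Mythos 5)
Your approach is the same as the paper's: reduce the convolution identity to a scalar identity via the Satake transform, compute $\hat{D}(\pi,s)$ by integrating $D$ against the Macdonald spherical function using the Cartan decomposition, and compare with $\Lfun(5s-2,\pi,\st)\cdot\hat P(\pi,s)$. The paper likewise defers the Weyl-sum evaluation to a ``direct computation,'' so your outline matches both the strategy and the level of detail given there.
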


\begin{proof}
Let $\omega_{\pi}$ be the normalized spherical function associated to $\pi$. For any functional $l$ of $\pi$ one has 
\[
\int_{G} D\bk{g,s}l\bk{\pi(g) v^0}\, dg=l\bk{v^0}\, \int_{G} D\bk{g,s} \omega_\pi\bk{g}\, dg \ .
\]
Using Macdonald's formula \cite[Theorem 4.2]{MR571057} for $\omega_\pi$ this integral turns to a sum of geometric progressions that converges for $\Real{s}\more\more{0}$. A direct computation yields
\begin{equation}
\label{eq:Macdonalds}
\hat{D}\bk{\pi,s}= \int_G D\bk{g,s}\omega_{\pi}\bk{g}\, dg= 
\Lfun\bk{5s-2,\pi,\st}\cdot Q\bk{s,\pi}\ .
\end{equation}
Here
\begin{equation}
Q\bk{\pi, s}=
\frac{P_0\bk{q^{2-5s}} -P_1\bk{q^{2-5s}} tr\bk{\st}\bk{t_\pi}}
{\zfun\bk{5s-1}\zfun\bk{5s+1}\zfun\bk{5s-2}}\ .
\end{equation}

On the other hand $\Lfun\bk{5s-2,\pi,st}=\hat\Delta\bk{\pi,5s-2}$ and obviously $Q\bk{\pi,s}=\hat{P}\bk{\pi,s}$. Since \autoref{eq:Macdonalds} holds for any unramified $\pi$ the proposition follows.

\end{proof}

Since the Fourier transform is a map of $\Hecke$ modules it follows:
\begin{Cor}
\[
D^{\Psi_s}\bk{s}= \Delta^{\Psi_s}\bk{5s-2} \ast P\bk{s} \ .
\]
\end{Cor}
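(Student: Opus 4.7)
The plan is to obtain this corollary as an immediate consequence of the preceding proposition by applying the Fourier transform $f \mapsto f^{\Psi_s}$ to both sides of the identity $D(\cdot,s) = \Delta(\cdot,5s-2) \ast P(\cdot,s)$. The substantive content has already been done, so the task is simply to verify that the Fourier transform interacts correctly with right-convolution by elements of $\Hecke$.

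First I would recall the structure: $\Hecke$ acts on itself by right convolution, and it acts on $\mathcal{M}_{\Psi_s}$ (and $\mathcal{M}^0_{\Psi_s}$) by right convolution as well. The statement that ``the Fourier transform is a map of $\Hecke$-modules'' means that the linear map
\[
\Hecke\longrightarrow \mathcal{M}_{\Psi_s},\qquad f\longmapsto f^{\Psi_s},
\]
intertwines these two right $\Hecke$-actions. To see this, one just computes: for $f_1,f_2\in \Hecke$ and $g\in G\bk{F}$,
\[
(f_1\ast f_2)^{\Psi_s}(g)=\int_{U\bk{F}}\!\!\int_{G\bk{F}} f_1(ugh^{-1})f_2(h)\,dh\,\overline{\Psi_s(u)}\,du,
\]
and swapping the order of integration (which is justified since $f_2$ is compactly supported and $f_1$ is left $U$-continuous, so all integrals converge absolutely) yields
\[
(f_1\ast f_2)^{\Psi_s}(g)=\int_{G\bk{F}} f_1^{\Psi_s}(gh^{-1})f_2(h)\,dh=(f_1^{\Psi_s}\ast f_2)(g).
\]

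Applying this identity with $f_1=\Delta(\cdot,5s-2)$ and $f_2=P(\cdot,s)$, and using the proposition $D(\cdot,s)=\Delta(\cdot,5s-2)\ast P(\cdot,s)$, we get
\[
D^{\Psi_s}(\cdot,s)=\bigl(\Delta(\cdot,5s-2)\ast P(\cdot,s)\bigr)^{\Psi_s}=\Delta^{\Psi_s}(\cdot,5s-2)\ast P(\cdot,s),
\]
which is the stated corollary. The only thing to note is that $\Delta$ and $P$ are formal power series in $q^{-s}$ with coefficients in $\Hecke$, so the identity is really an equality in $\mathcal{M}_{\Psi_s}\coset{\coset{q^{-s}}}$; the verification above holds coefficient-by-coefficient, so no analytic issue arises. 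There is no substantive obstacle here — the real work was packaged into the preceding proposition (the Macdonald-formula computation of $\hat{D}(\pi,s)$), and the corollary is simply a functorial consequence.
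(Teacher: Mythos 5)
Your argument is correct and is exactly the paper's route: the paper derives the corollary in one line from the observation that the Fourier transform $f\mapsto f^{\Psi_s}$ is a map of $\Hecke$-modules, which is precisely the intertwining identity $(f_1\ast f_2)^{\Psi_s}=f_1^{\Psi_s}\ast f_2$ that you verify by Fubini and then apply coefficient-by-coefficient to $D(\cdot,s)=\Delta(\cdot,5s-2)\ast P(\cdot,s)$. You have merely made explicit the verification the paper leaves implicit, so there is nothing to add.
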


The basic identity \ref{basic_identity} will follow once we prove
\begin{equation}
\label{new:basic:identity}
D^{\Psi_s}=F^\ast\conv P
\end{equation}
and 
\begin{Prop}
\label{invertible}
There exists $s_0$ such that whenever $\Real{s}>s_0$ 
 $f\ast P\bk{\cdot, s}=0$ implies  $f=0$
for any $f\in \mathcal{M}_{\Psi_s}$.
\end{Prop}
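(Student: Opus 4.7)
The plan is to show the convolution operator $T_P\colon f \mapsto f \ast P(\cdot, s)$ is injective on $\mathcal{M}_{\Psi_s}$ for $\Real{s}$ large by translating the condition into a Hecke eigenvalue equation and arguing that the required eigenvalue cannot be matched in the relevant spectrum. For $\Real{s}$ large enough that the factor $\zeta(5s-1)\zeta(5s+1)\zeta(5s-2)$ in the denominator of $P$ is nonzero, the equation $f \ast P(\cdot,s) = 0$ is equivalent to
\[
P_0(q^{2-5s})\, f \;=\; P_1(q^{2-5s})\, (f \ast A_1),
\]
so $f$ must be a Hecke eigenfunction of $A_1$ with eigenvalue $\lambda(s) := P_0(q^{2-5s})/P_1(q^{2-5s})$. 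Since $P_0(0)=1$ and $P_1(z)=z^2/q$, we have $|\lambda(s)| \sim q^{10\Real{s}-3} \to \infty$ as $\Real{s} \to \infty$.

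Next I would test this eigenvalue relation against unramified principal series. For any unramified irreducible $\pi$ of $G(F)$ with spherical vector $v^0$ and any $l \in \Hom_U(\pi, \C_{\Psi_s})$, the pairing
\[
F_\pi(f,l) \;:=\; \int_{U\backslash G} f(g)\, l(\pi(g)v^0)\, dg
\]
(defined in a suitable convergence region) satisfies $F_\pi(f \ast A_1, l) = \mathrm{tr}(\st)(t_\pi)\, F_\pi(f,l)$ by a standard manipulation involving a change of variables together with the identity (\ref{Aj:definition}). Combined with the eigenvalue relation above this yields $(\lambda(s) - \mathrm{tr}(\st)(t_\pi))\, F_\pi(f,l) = 0$. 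Since $|\lambda(s)| \to \infty$, for $\Real{s}$ sufficiently large $\lambda(s)$ lies strictly outside the range of $\mathrm{tr}(\st)(t_\pi)$ as $\pi$ ranges over any fixed bounded locus of Satake parameters, so $F_\pi(f,l) = 0$ for every such $\pi$ and every $l \in \Hom_U(\pi,\C_{\Psi_s})$.

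The final step is a Plancherel-type density argument to conclude $f = 0$ from the vanishing of $F_\pi(f,l)$ for $\pi$ in a large enough family and all admissible $l$. I would use the Iwasawa decomposition $G = UMK$ to identify $\mathcal{M}_{\Psi_s}$ with a space of functions on $M/(M \cap K)$, where $M \simeq GL_2$ is the Levi of the Heisenberg parabolic, and then invoke Mellin inversion on the diagonal torus of $M$. This last step is the main obstacle: $\mathcal{M}_{\Psi_s}$ carries no built-in growth condition, so convergence of $F_\pi(f,l)$ and density of the tested $\pi$ need care. A practical remedy is to first right-convolve $f$ with compactly supported bi-$K$-invariant auxiliaries (noting that $T_P$ commutes with all such convolutions) in order to tame the support, and then pass to the limit, thereby reducing to the standard spherical Plancherel theorem on $M$.
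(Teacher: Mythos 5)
Your first two steps are sound and in fact dualize the paper's key estimate. Rewriting $f\ast P=0$ as the eigenvalue equation $f\ast A_1 = \lambda(s) f$ with $\lambda(s)=P_0(q^{2-5s})/P_1(q^{2-5s})\to\infty$ is correct, and observing that $\lambda(s)$ eventually escapes the range of $\operatorname{tr}(\st)(t_\pi)$ over tempered $\pi$ is exactly the Gelfand-dual form of the norm bound that the paper uses. Indeed the paper proves the stronger statement that $P(\cdot,s)$ is \emph{invertible} in a $C^*$-completion $\hat{\Hecke}$ of the spherical Hecke algebra: writing $P$ as a multiple of $A_0 - \frac{P_1}{P_0}A_1$, the invertibility follows from a Neumann-series argument once $\left|\frac{P_1}{P_0}\right|\cdot\|A_1\| < 1$, which holds for $\Real s\gg 0$ since $P_0\to 1$ and $P_1\to 0$. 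The $C^*$-norm $\|A_1\|$ is precisely $\sup_{\pi\text{ tempered}}|\operatorname{tr}(\st)(t_\pi)|$, so the threshold on $\Real s$ in both arguments is the same. The difference is that the paper stops at invertibility in the Banach algebra and thereby avoids having to reconstruct $f$ from its spectral data, whereas you attempt an explicit Plancherel inversion.

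That last step is where the proposal has a genuine gap, as you yourself flag. The space $\mathcal{M}_{\Psi_s}$ carries no a priori integrability or growth restriction, so the pairings $F_\pi(f,l)$ need not converge, and no amount of right-convolution by compactly supported bi-$K$-invariant functions will repair this: convolving against elements of $\Hecke$ does not truncate the support of $f$ modulo $U$ nor tame its growth, it merely smears $f$ over $K$-double cosets while commuting with $T_P$. Thus the reduction ``to the standard spherical Plancherel theorem on $M$'' is not actually achieved. The paper sidesteps the entire issue by working intrinsically in $\hat{\Hecke}$: invertibility of $P$ there is a ring-theoretic statement that does not require testing against the unramified dual at all. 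If you want to keep the spectral flavour of your argument, the cleanest fix is to recognize that the $|\lambda(s)|>\|A_1\|$ estimate you derived is exactly the hypothesis for a Neumann-series inverse of $A_0-\lambda(s)^{-1}A_1$ in $\hat{\Hecke}$, and conclude invertibility of $P$ from there rather than via Plancherel inversion on $\mathcal{M}_{\Psi_s}$.
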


Indeed from \autoref{new:basic:identity} we get
\[
\bk{\Delta^{\Psi_s}\bk{\cdot, 5s-2} - F^\ast\bk{\cdot,s}} \conv P\bk{\cdot,s}=\bk{D^{\Psi_s}-F^\ast}\conv P=0
\]
and hence by \autoref{invertible} we have $\Delta^{\Psi_s}=F^\ast$ for $\Real{s}\more\more{0}$. As already mentioned this equality is actually true for all $s\in\C$. The only restriction on the convergence of the integral in \autoref{thm:unramified} is the domain of convergence of the Poincar\'e identity. We now turn to prove \autoref{invertible} and \autoref{new:basic:identity}.

The following observation is useful for the proof of \autoref{invertible}.
\begin{Remark}
We note that $\Hecke$ can be completed into a C$^{*}$-algebra $\hat{\Hecke}$ as a closed subspace of the reduced group C$^{*}$-algebra of $G$. One way to do this is to use the action of $\Hecke$ on $L^2\bk{K\setminus G/K}$ by convolution. This is a separable Hilbert space and $\Hecke$ admits an embedding into $\mathcal{B}\bk{L^2\bk{K\setminus G/K}}$ in which we complete it with respect to the operator norm. In fact, for our needs we only need to know that a C$^{*}$-norm and such a completion exist.
\end{Remark}

\begin{proof}[Proof of \autoref{invertible}]
We will show a stronger statement: there exists $s_0$ such that for any $\Real\bk{s}>s_0$ the element $P\bk{\cdot,s}$ is invertible in $\hat{\Hecke}$. For $\Real\bk{s}\more\more{0}$ this is equivalent to showing that 
\[
x:=A_0-\frac{P_1\bk{q^{2-5s}}}{P_0\bk{q^{2-5s}}}A_1
\]
is invertible. Since $\hat{\Hecke}$ is a C$^{*}$-algebra it will suffice to show that $\left\Vert \frac{P_1\bk{q^{2-5s}}}{P_0\bk{q^{2-5s}}}A_1\right\Vert\less{1}$.  We have
\[
\left\Vert \frac{P_1\bk{q^{2-5s}}}{P_0\bk{q^{2-5s}}}A_1\right\Vert=\FNorm{\frac{P_1\bk{q^{2-5s}}}{P_0\bk{q^{2-5s}}}} \left\Vert A_1\right\Vert
\]
and since 
\[
\lim_{\Real\bk{s}\rightarrow\infty} P_0\bk{q^{2-5s}}=1 \quad \text{and} \quad \lim_{\Real\bk{s}\rightarrow\infty} P_1\bk{q^{2-5s}}=0 \ ,
\]
there exists $s_0$ such that for $\Real\bk{s}\more{s_0}$ we have
\[
\FNorm{\frac{P_1\bk{q^{2-5s}}}{P_0\bk{q^{2-5s}}}} < \frac{1}{\left\Vert A_1\right\Vert} \ .
\]
\end{proof}

It remains to verify \autoref{new:basic:identity}. We shall evaluate
explicitly both functions and miraculously get the same answer.

\begin{Thm}
Both $D^{\Psi_s}$ and $F^\ast\conv P$ vanish outside $S_{\Psi_s} U T K$.
For $t=h_\alpha(t_1)h_\beta(t_2)$ such that $t_1,\frac{t_2}{t_1}\in\mO$ it holds  
\[
D^{\Psi_s}(t,s)=\bk{F^\ast \conv P}\bk{t,s}=
\piece{\frac{1+q^{1-5s}}{\zfun\bk{5s+1}} \FNorm{\frac{t_2}{t_1}}\FNorm{t_1}^{5s},& \FNorm{\frac{t_1^2}{t_2}}\less{1}\\ 
\frac{1+q^{1-5s}}{\zfun\bk{5s+1}}\FNorm{\frac{t_2}{t_1}}^{5s} \FNorm{t_1},& \FNorm{\frac{t_1^2}{t_2}}\more{1} \\
\frac{1+2q^{1-5s}}{\zfun\bk{5s+1}}\FNorm{t_1}^{5s+1},& \FNorm{\frac{t_1^2}{t_2}}={1}} \ .
\]
\end{Thm}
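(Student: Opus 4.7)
The plan is to verify the identity by independently computing $D^{\Psi_s}$ and $F^\ast \conv P$ as explicit functions in $\mathcal{M}^0_{\Psi_s}$, and showing that each matches the stated piecewise formula. First I would confirm that both sides lie in $\mathcal{M}^0_{\Psi_s}$: for $D^{\Psi_s}$ this follows from the bi-$K$-invariance of $D$ together with the $S_{\Psi_s}$-invariance of $\Psi_s$; for $F^\ast \conv P$, the left $\bk{U,\Psi_s}$-equivariance and left $S_{\Psi_s}$-invariance of $F^\ast$ can be read off its defining integral after a short Bruhat calculation in $Spin_8$, and convolution with elements of $\Hecke$ preserves these symmetries. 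Using the Iwasawa decomposition $G = UMK$ with $M \simeq GL_2$ generated by $\alpha$, together with the Cartan decomposition in $M$, any function in $\mathcal{M}^0_{\Psi_s}$ is determined by its values on the torus elements $t = h_\alpha\bk{t_1} h_\beta\bk{t_2}$; the claimed vanishing outside $S_{\Psi_s} U T K$ then amounts to checking that the remaining representatives of $S_{\Psi_s} U \backslash G / K$ contribute zero, which is a direct root-data computation.

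Next I would evaluate $D^{\Psi_s}\bk{t,s}$ by direct unfolding
\[
D^{\Psi_s}\bk{t,s} = \int\limits_{U\bk{F}} D\bk{u\,t,\,s}\,\overline{\Psi_s\bk{u}}\,du \ ,
\]
parametrising $u = u\bk{r_1,\ldots,r_5}$ and partitioning the domain according to the Cartan class of $u\,t$, using $D\bk{h,s}=\FNorm{\omega_1\bk{h}}^{5s+1}$ on $T^+$. Each piece reduces to a geometric series in the $\FNorm{r_i}$, and the three regimes distinguished by whether $\FNorm{t_1^2/t_2}$ is less than, greater than, or equal to $1$ emerge naturally from whether the dominant $\omega_1$-exponent is determined by the parameters $r_i$ or by $t_1, t_2$. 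Assembling the pieces produces the stated piecewise formula.

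In parallel I would evaluate $F^\ast\bk{t,s}$ by placing $w_2 w_3 x_{-\alpha_1}\bk{1} x_{\alpha+\beta}\bk{r}\,t$ in Iwasawa form with respect to $P_H$ in $Spin_8$, applying $\delta_{P_H}^s$, and carrying out the $r$-integration. The convolution $F^\ast \conv P$ is then expanded via
\[
P\bk{\cdot,s} = \frac{P_0\bk{q^{2-5s}}\,A_0 - P_1\bk{q^{2-5s}}\,A_1}{\zfun\bk{5s-1}\,\zfun\bk{5s+1}\,\zfun\bk{5s-2}} \ ,
\]
which turns the convolution into a finite linear combination of right translates of $F^\ast$ by the coset representatives supporting the Hecke operators $A_0$ and $A_1$, weighted by $P_0\bk{q^{2-5s}}$, $-P_1\bk{q^{2-5s}}$, and the zeta denominators.

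The main obstacle is the final collapse: the translates of $F^\ast$ produced by $A_1$ live a priori on a larger set than $T K$, and the clean piecewise expression emerges only after the off-torus contributions cancel and the on-torus contributions combine --- through the precise shapes of $P_0$ and $P_1$ dictated by Macdonald's formula \eqref{eq:Macdonalds} --- into exactly the expression one obtains for $D^{\Psi_s}$. This is the ``miraculous'' coincidence alluded to in the text; the bulk of the routine but voluminous calculations is deferred to \autoref{App:Computing D-psi and convolution} and \autoref{App:Computing F(,s)}, where they are carried out in detail.
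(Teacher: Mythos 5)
Your plan is essentially the paper's own proof: both sides are computed explicitly and compared --- $F$ via an Iwasawa-type manipulation in $Spin_8$ followed by convolution against the left-coset decomposition of $K\omega_1\bk{\unif}K$ supporting $A_1$, and $D^{\Psi_s}$ by unfolding over $U$ and slicing according to the value of $\FNorm{\omega_1}$ on the Cartan part of $ut$ (which the paper implements concretely through the embedding $\iota\colon G\rightarrow SO_7$ and the function $\Gamma\bk{g}=\max_{i,j}\FNorm{\iota\bk{g}_{i,j}}$), with the non-toral representatives $t\,x_\alpha\bk{d}$ checked to contribute zero. Apart from the slight imprecision that functions in $\M^0_{\Psi_s}$ are determined on $T\,x_\alpha\bk{d}$ modulo $K$ rather than on $T$ alone (which your next clause effectively corrects), this matches the argument carried out in the two appendices.
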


For the right hand side we first compute explicitly the function $F_s=\frac{F^\ast_s}{j\bk{s}}$ and then perform the convolution. This  tedious, but quite straightforward computation is performed in \autoref{App:Computing F(,s)}.

Now let us explain how to evaluate the left hand side. Let $SO_7$ be the 
special orthogonal group viewed it as a subgroup of $GL_7$, preserving the split symmetric form
$(\delta_{i,7-i})$.
Fix an embedding 
$\iota: G\bk{F}\rightarrow SO_7\bk{F}$ as in \cite{MR1617425}. 
 In \autoref{App:Computing D-psi and convolution} we give a realization of this map. 
Define a function $\Gamma: G\bk{F}\rightarrow \R$ by
\[
\Gamma\bk{g}=\max_{1\le i,j\le 7} \FNorm{\iota(g)_{i,j}} \ .
\]

The following result is easily checked.
\begin{Lem}
$\Gamma$ is a bi-$K$-invariant function and for $t\in T^{+}$
\[
\Gamma\bk{t}=\FNorm{\omega_1(t)}^{-1} \ .
\]
\end{Lem}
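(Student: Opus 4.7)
The plan is to verify the two claims separately, treating as input the explicit integral realization of $\iota$ from \autoref{App:Computing D-psi and convolution}.

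First I would establish bi-$K$-invariance. The embedding $\iota$ is obtained from the Chevalley integral model of the $7$-dimensional representation of $G$, so in particular $\iota(K) \subseteq SO_7(\mO)$. Given $k_1,k_2\in K$ and $g\in G(F)$, every entry of $\iota(k_1gk_2) = \iota(k_1)\iota(g)\iota(k_2)$ is an $\mO$-linear combination of entries of $\iota(g)$, hence $\Gamma(k_1gk_2)\le \Gamma(g)$. Applying the same inequality with $k_1^{-1},k_2^{-1}\in K$ in place of $k_1,k_2$ gives the reverse inequality, so equality holds.

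Next I would compute $\Gamma$ on $T^+$. By bi-$K$-invariance and the Cartan decomposition $G=KT^+K$ this determines $\Gamma$ everywhere. The $7$-dimensional representation $\st$ has weights equal to $0$ together with the six short roots $\pm\alpha,\pm(\alpha+\beta),\pm\omega_1$, with highest weight $\omega_1=2\alpha+\beta$. These weights are pairwise distinct, so the weight spaces are one-dimensional, and in a weight basis — which we may arrange to agree with the basis used in \autoref{App:Computing D-psi and convolution} — the matrix $\iota(t)$ is diagonal with entries given by the weights evaluated at $t$. Therefore
\[
\Gamma(t) = \max\left\{1,\,\FNorm{\alpha(t)}^{\pm 1},\,\FNorm{(\alpha+\beta)(t)}^{\pm 1},\,\FNorm{\omega_1(t)}^{\pm 1}\right\}.
\]
For $t\in T^+$ every positive root $\gamma$ satisfies $\FNorm{\gamma(t)}\le 1$, so only the negative weights can contribute values exceeding $1$. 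Using the decomposition $\omega_1=\alpha+(\alpha+\beta)$ gives $\FNorm{\omega_1(t)}=\FNorm{\alpha(t)}\cdot\FNorm{(\alpha+\beta)(t)}\le\min\bigl(\FNorm{\alpha(t)},\FNorm{(\alpha+\beta)(t)}\bigr)$ on $T^+$, so $\FNorm{\omega_1(t)}^{-1}$ dominates both $\FNorm{\alpha(t)}^{-1}$ and $\FNorm{(\alpha+\beta)(t)}^{-1}$. Hence the maximum is $\FNorm{\omega_1(t)}^{-1}$, as claimed.

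The main obstacle is purely bookkeeping: verifying that the specific basis used to realize $\iota$ in \autoref{App:Computing D-psi and convolution} is simultaneously a Chevalley-integral basis (needed for bi-$K$-invariance) and a basis of weight vectors for $T$ (needed to read off the diagonal form on the dominant chamber). For the standard $7$-dimensional representation of $G_2$, whose weights are all distinct and admit an integral model, such a basis exists and is essentially canonical up to units in $\mO$, so this is a routine check against the explicit matrices supplied in the appendix.
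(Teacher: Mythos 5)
Your argument is correct and is essentially the verification the paper has in mind (the paper only says the lemma "is easily checked," relying on the explicit integral realization of $\iota$ in \autoref{App:Computing D-psi and convolution}, whose matrix for $\iota(ut)$ indeed shows the diagonal torus entries are the weights $\pm\omega_1,\pm\alpha,\pm(\alpha+\beta),0$, while integrality of $\iota(K)$ — note the denominators $2$ are harmless since $\nu\nmid 2$ — gives bi-$K$-invariance by the standard ultrametric estimate on matrix entries). Nothing further is needed.
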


Thus $D\bk{g,s}=\sum_{k=0}^\infty D_k\bk{g}q^{-\bk{5s+1}k}$, where 
\[
D_k\bk{g}=
\piece{1, & \Gamma(g)=q^k \\ 0, & {\rm otherwise}} \ .
\]

For any $g\in{G}$ define $U_k\bk{g}=\set{u\in U: \Gamma(ug)\le q^k}$.
Then obviously 
\[
D^{\Psi_s}\bk{g,s}=\sum_{k=0}^{\infty} \bk{E_k\bk{g}-E_{k-1}\bk{g}} q^{-\bk{5s+1}k} \ ,
\]
where
\[
E_k\bk{g}= \int\limits_{U_k\bk{g}} \overline{\Psi_s\bk{u}} du, \ .
\]

The computation of $E_k\bk{g}$ can be further reduced to a calculation of volumes of certain sets. For a given $g$ there is at most two values of $k$ for which $E_k\bk{g}\neq 0$. The detailed computation is performed in \autoref{App:Computing D-psi and convolution}.

\section{Ramified Computation}
\label{ramified}
Fix a vector $\varphi=\otimes v_{\nu}$ in $\pi$ such that $L_{\Psi_s}\bk{\varphi}\not\equiv 0$ and $v_\nu$ is spherical outside of $S$. Recall from \autoref{thm: unfolding} that for the representative of the open orbit $\mu$
\[
d_S\bk{s,\varphi_S, f_s}= \int\limits_{U_S\backslash G_S} L_{\Psi_s}\bk{\varphi}\bk{g} F^\ast\bk{g,s}\, dg=j\bk{s} \int\limits_{U^\mu_S\backslash G_S}  L_\Psi\bk{\varphi}\bk{g} f\bk{\mu g,s}\, dg \ .
 \]

\subsection{Non-Archimedean case}
\begin{Prop}
\label{Prop:ramified non-arch}
Let $\nu$ be a finite prime. There exists $v\in \pi_\nu$ and a section $f\in Ind^{H\bk{k_\nu}}_{P_H\bk{k_v}}\delta_{P_H}^s$ such that for any $l\in \Hom_{U\bk{k_v}}\bk{\pi,\C_{\Psi_s}}$ and $s\in\C$ it holds
\[
\int\limits_{U^\mu_\nu\backslash G_\nu}  l\bk{\pi_\nu\bk{g}v} f\bk{\mu g,s} dg= l\bk{v} \ .
\]
\end{Prop}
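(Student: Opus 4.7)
The plan is the standard Rankin-Selberg test-vector argument: pick $v$ fixed by a small open compact subgroup of $G\bk{k_\nu}$, together with a section $f$ concentrated near $\mu$ on the open orbit of $P_H\backslash H$, and show that the integrand localizes to $l\bk{v}$ after the $\Psi_s$-equivariances cancel.

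First, by the same manipulation as in the proof of \autoref{thm: unfolding} --- writing $U=U^\mu\cdot x_{\alpha+\beta}$ and using the equivariance $l\bk{\pi\bk{x_{\alpha+\beta}\bk{r}}v'}=\psi\bk{r}l\bk{v'}$ --- one obtains
\[
\int\limits_{U^\mu_\nu\backslash G_\nu} l\bk{\pi\bk{g}v}f\bk{\mu g,s}\,dg = \int\limits_{U\bk{k_\nu}\backslash G\bk{k_\nu}} l\bk{\pi\bk{g}v}F\bk{g,s}\,dg,
\]
where $F\bk{g,s}=\int_{k_\nu}f\bk{\mu x_{\alpha+\beta}\bk{r}g,s}\psi\bk{r}\,dr$ transforms under $U\bk{k_\nu}$ as $F\bk{ug,s}=\overline{\Psi_s\bk{u}}F\bk{g,s}$. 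Working on $U\bk{k_\nu}\backslash G\bk{k_\nu}$ is decisive: the potentially troublesome $T^\mu$-direction in $U^\mu\backslash G$ gets integrated against $\psi\bk{r}$ in the $x_{\alpha+\beta}$-direction, and will be cut off by non-archimedean Fourier considerations.

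Next, I would pick a small open compact subgroup $K_0\subset G\bk{k_\nu}$ and a vector $v\in\pi_\nu$ fixed by $K_0$ (possible by smoothness of $\pi_\nu$). Let $\Xi\subset K_0$ be a compact open cross-section for the open embedding $G^\mu\backslash G\hookrightarrow P_H\backslash H$ near the identity, and define $f$ to be supported on $P_H\mu\Xi$ with $f\bk{p\mu\xi,s}=\delta_{P_H}\bk{p}^s\chi_\Xi\bk{\xi}$. Well-definedness as a section of $\Ind^{H\bk{k_\nu}}_{P_H\bk{k_\nu}}\delta_{P_H}^s$ follows from the observation that any overlap $\xi_1\xi_2^{-1}\in G^\mu\cap\Xi\Xi^{-1}$ lies in a compact subgroup whose conjugate by $\mu$ lands in $P_H\cap K_H$, on which $\delta_{P_H}$ is trivial. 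The same triviality ensures that $F\bk{g,s}$ is independent of $s$ on its support. Moreover, although $f\bk{\mu\,\cdot\,,s}$ has support $G^\mu\Xi$ that extends non-trivially along $T^\mu$, the partial Fourier transform $F\bk{g,s}$ has compact support in $U\bk{k_\nu}\backslash G\bk{k_\nu}$: for $g=t\xi$ with $t\in T^\mu$, a commutation of $t$ past $x_{\alpha+\beta}\bk{r}$ converts the $r$-integral into a Fourier transform evaluated at $\lambda\bk{t}$, which vanishes for $\FNorm{\lambda\bk{t}}$ outside a compact range since $r\mapsto f\bk{\mu x_{\alpha+\beta}\bk{r}\xi,s}$ is compactly supported and locally constant. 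Thus supp$\bk{F}\subset U\bk{k_\nu}K_0$ in $U\bk{k_\nu}\backslash G\bk{k_\nu}$ after $K_0$ and $\Xi$ are refined appropriately.

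Third, evaluation: on the support $UK_0$, write $g=uk$; then $l\bk{\pi\bk{uk}v}=\Psi_s\bk{u}l\bk{v}$ since $\pi\bk{k}v=v$, and $F\bk{uk,s}=\overline{\Psi_s\bk{u}}F\bk{k,s}$, so their product equals $l\bk{v}\cdot F\bk{k,s}$. Integrating over the compact fundamental domain of $UK_0/U$ yields $l\bk{v}\cdot c$ for a constant $c$ depending only on $\Xi$. Normalizing $\Xi$ so that $c=1$ gives the identity; since the integrand is compactly supported and $s$-independent, the identity holds for all $s\in\C$ with no meromorphic continuation needed.

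The main obstacle is the geometric verification in the second step: establishing that the section $f$ is well-defined as an element of $\Ind_{P_H}^H\delta_{P_H}^s$ and that the partial Fourier transform $F\bk{g,s}$ genuinely has compact support in $U\bk{k_\nu}\backslash G\bk{k_\nu}$ while remaining $s$-independent. This requires careful tracking of the open-orbit decomposition $H=P_H\sqcup P_H\mu G$, the semidirect structure $G^\mu=T^\mu\ltimes U^\mu$, the conductor-based Fourier cut-off in the $x_{\alpha+\beta}$-variable, and the triviality of $\delta_{P_H}$ on compact subgroups.
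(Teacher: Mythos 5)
Your reduction to the $U\bk{k_\nu}\backslash G\bk{k_\nu}$-integral against $F\bk{g,s}$ is sound and parallels the paper's use of the isomorphism $\Ind^H_{P_H}\delta_{P_H}^s\simeq\ind^G_{G^\mu}\chi_s$. The gap is in your second step: the claim that $F\bk{g,s}$ has compact support in $U\bk{k_\nu}\backslash G\bk{k_\nu}$ is false. The issue is precisely the $T^\mu$-direction, which your argument does not actually control. Writing $t=h_{3\alpha+2\beta}\bk{a}\in T^\mu$ and $\xi\in\Xi$, one finds (using $\mu T^\mu\mu^{-1}\subset P_H$, the modulus character, and a change of variable in $r$)
\[
F\bk{t\xi,s}=\FNorm{a}^{5s+1}\,\widehat{\phi_\xi}\bk{a},
\qquad
\phi_\xi\bk{r}:=f\bk{\mu x_{\alpha+\beta}\bk{r}\xi,s}.
\]
Here $\phi_\xi$ is a locally constant, compactly supported function, so $\widehat{\phi_\xi}$ is again Schwartz: it vanishes for $\FNorm{a}$ large, but $\widehat{\phi_\xi}\bk{0}=\int\phi_\xi\bk{r}\,dr$ is generically nonzero (for a section $f$ that is a characteristic function near $\mu$ it is strictly positive), so $\widehat{\phi_\xi}$ does \emph{not} vanish in any neighbourhood of $a=0$. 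Consequently $F$ has an infinite tail along $T^\mu$ in $U\bk{k_\nu}\backslash G\bk{k_\nu}$: it decays like $\FNorm{a}^{5s+1}$ but is not eventually zero. In particular your third step --- collapsing the integral to a compact fundamental domain $UK_0/U$ --- does not apply, and the $T^\mu$-integral $\int_{k_\nu^\times}l\bk{\pi\bk{h_{\omega_2}\bk{a}}v}\FNorm{a}^{5s+1}\widehat{\phi_\xi}\bk{a}\,d^\times a$ still has to be evaluated; it is not obviously $l\bk{v}$, and the factor $l\bk{\pi\bk{h_{\omega_2}\bk{a}}v}$ may grow as $\FNorm{a}\to 0$.

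The paper closes exactly this gap by \emph{modifying the test vector}, not the section. After cutting the auxiliary variables with a section as you describe, one is left with the one-dimensional integral over $T^\mu\cong k_\nu^\times$. The device is then to replace $v$ by $\phi\ast v=\int_{k_\nu}\phi\bk{r}\pi\bk{x_{2\alpha+\beta}\bk{r}}v\,dr$ for a Schwartz function $\phi$. Because $l$ is $\bk{U,\Psi_s}$-equivariant and $h_{2\alpha+\beta}$ scales $x_{2\alpha+\beta}$, this produces the multiplier $l\bk{\pi\bk{h_{2\alpha+\beta}\bk{a}}\bk{\phi\ast v}}=\hat\phi\bk{a}\,l\bk{\pi\bk{h_{2\alpha+\beta}\bk{a}}v}$, and one chooses $\phi$ so that $\hat\phi=\Id_{1+\varpi^m\mathcal O}$. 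This factor is simultaneously compactly supported \emph{and} bounded away from $0$ on its support, which is what kills the $T^\mu$-tail and collapses the integral to $l\bk{v}$ (up to a volume constant, with $l\bk{\phi\ast v}=\hat\phi\bk{1}l\bk{v}=l\bk{v}$). Your proposal never introduces such a multiplier, so the $T^\mu$-direction remains uncontrolled; you cannot fix this merely by shrinking $\Xi$ or $K_0$, since that does not remove $\widehat{\phi_\xi}\bk{0}\neq 0$.
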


\begin{proof}
Start with $v=v_{\nu}$, its  stabilizer contains a congruence compact subgroup  $K_m\subset K\bk{\mO_\nu}$ for some $m$.
Denote $\chi_s\bk{p}=\delta_{P_H}^s\bk{p^{\mu}}$. Since $\mu$ generates the open orbit, the map
\[
i:\Ind^H_{P_H} \delta_{P_H}^s\hookrightarrow \ind^G_{G^\mu} \chi_s \ ,
\]
defined  by $i(f)(g)=f(\mu g,s)$ is an isomorphism. By induction by stages the latter representation equals $\Ind^G_B \ind^B_{T^\mu U^\mu} \chi_s$.

Any $g\in G\bk{k_{\nu}}$ can be represented as $g^\mu h_{\beta}\bk{t}x_{\alpha}\bk{r_1}x_{\alpha+\beta}\bk{r_2}k$ 
with $g^\mu=u^\mu t^\mu \in G^\mu, k\in K$. We fix a section $f$ such that
\[
f\bk{g}=\chi_s\bk{g^\mu} \Id_{T^c\cap K_m}\bk{t} \Id_{U\cap K_m}\bk{r_1,r_2}\Id_{K_m}\bk{k} \ .
\]

Then
\[
\int\limits_{U^\mu_\nu\backslash G_\nu}  l\bk{\pi_\nu\bk{g}v} f\bk{\mu g,s} dg= \int\limits_{T^\mu_\nu} l\bk{\pi_\nu\bk{t^\mu} v} \chi_s\bk{t_\mu} dt_\mu= \int_{k^{\times}_\nu}l\bk{\pi\bk{h_{\omega_2}\bk{t}} \cdot v}\FNorm{t}^{5s} \, d^{\times}t \ .
\]

For any $\phi\in \Sch{k_v}$ define 
\[
\phi\ast v=\int_{k_v} \phi\bk{r}\pi\bk{x_{2\alpha+\beta}\bk{r}} v\, dr \ ,
\]
then 
\[
l\bk{h_{2\alpha+\beta}\bk{t} \phi\ast v}=\hat{\phi} \bk{t} l\bk{h_{2\alpha+\beta}\bk{t} v} \ .
\]
In particular, taking $\phi$ such that $\hat{\phi} =\Id_{1+\varpi^m \mathcal O}$ one has 
\[
\int\limits_{k_v^\times}l\bk{t\bk{\phi\ast v}} \FNorm{t}^{5s} \, dt= l\bk{v} \ .
\]
\end{proof}

Using the proposition above the computation of $d_S$ is reduced to the computation of $d_{\infty}$. Indeed, for a finite $\nu\in S$ denote $S'=S\backslash \set{\nu}$. Having $\varphi_S$ be a pure tensor vector $\otimes_{\nu'\in S'} v_{\nu'} \otimes v_\nu$ with $v_\nu$ as in \autoref{Prop:ramified non-arch}
\begin{align*}
d_S\bk{s,\varphi_S f_s} &= \int\limits_{U^\mu_{S}\backslash G_{S}} L_{\Psi_s}\bk{\varphi}\bk{g} f\bk{\mu g,s}\, dg= \\
&=\int\limits_{U^\mu_{S'}\backslash G_{S'}} \int\limits_{U^\mu_\nu\backslash G_\nu}   L_{\Psi_s}\bk{g_{S'} v\otimes g_\nu v_\nu}f_\nu\bk{\mu g_\nu,s} dg_\nu f_{S'}\bk{\mu g,s} dg= \\
&=\int\limits_{U^\mu_S\backslash G_S} L_{\Psi_s}\bk{g_{S'} v}f_{S'}\bk{\mu g_{s'},s} dg\  l\bk{v_\nu} \ .
\end{align*}

By induction the integral equals (up to a non-zero constant)
\[
d_{\infty}\bk{\varphi_{\infty}, f_{s,\infty},g}=  \int\limits_{U^\mu_{\infty}\backslash G_{\infty}} L_{\Psi_s}\bk{\varphi}\bk{g} f\bk{\mu g,s} dg \ .
\]

\subsection{Archimedean place}
\begin{Prop}
For any $s_0\in\C$ there exists $\varphi_{\infty}$ and $f_{\infty}$ so that $d_{S_\infty}(\varphi_{S_\infty}, f_{s_\infty},g)$ is analytic and non-zero in a neighbourhood of $s_0$.
\end{Prop}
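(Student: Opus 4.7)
The plan is to adapt the non-archimedean argument of \autoref{Prop:ramified non-arch} to the smooth setting, replacing characteristic functions of compact-open subgroups with smooth bump functions and invoking a Mellin-type analysis. First I would fix $\varphi_\infty$ so that $L_{\Psi_s}(\varphi)$ is globally non-zero, and use the open-orbit isomorphism
\[ i:\Ind^{H(k_\infty)}_{P_H(k_\infty)}\delta_{P_H}^s\xrightarrow{\sim}\ind^{G(k_\infty)}_{G^\mu(k_\infty)}\chi_s,\qquad i(f)(g)=f(\mu g,s), \]
to rewrite the archimedean factor. Via a smooth Iwasawa-type decomposition of $G(k_\infty)$ analogous to the one used in the non-archimedean proof, together with a partition of unity, I would take $i(f_\infty)$ to be a smooth $K_\infty$-finite section whose transverse support is a small neighborhood of the identity. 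This reduces $d_{S_\infty}$ to a one-dimensional Mellin integral of the form
\[ \int_{k_\infty^\times} l_\infty\!\bk{\pi_\infty\bk{h_{\omega_2}(t)}v}\FNorm{t}^{5s}\phi_0(t)\,d^\times t, \]
where $v\in\pi_\infty$ is a smooth vector, $l_\infty$ is the resulting local $(U,\Psi_s)$-functional, and $\phi_0\in C_c^\infty(k_\infty^\times)$ comes from the support of the section.

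Next I would invoke the convolution trick, now with $\phi\in\Sch{k_\infty}$. Setting $\phi\ast v:=\int_{k_\infty}\phi(r)\pi(x_{2\alpha+\beta}(r))v\,dr$, the $\Psi_s$-equivariance of $l_\infty$ combined with the conjugation identity $h_{\omega_2}(t) x_{2\alpha+\beta}(r) h_{\omega_2}(t)^{-1}=x_{2\alpha+\beta}(t^c r)$ (for a nonzero integer $c$) yields
\[ l_\infty\!\bk{\pi_\infty\bk{h_{\omega_2}(t)}(\phi\ast v)}=\hat\phi(t^c)\,l_\infty\!\bk{\pi_\infty\bk{h_{\omega_2}(t)}v}. \]
Choosing $\phi$ so that $\hat\phi$ is a non-negative Schwartz bump concentrated near some $t_0^c\in k_\infty^\times$ for which $l_\infty(\pi_\infty(h_{\omega_2}(t_0))v)\neq 0$, the integrand becomes smooth and compactly supported in $k_\infty^\times$. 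Hence the resulting integral is an entire function of $s$, and by shrinking the support of $\hat\phi$ around $t_0^c$ it is an arbitrarily close approximation of a non-zero constant multiple of $\FNorm{t_0}^{5s}$; in particular it is analytic and non-vanishing in a neighborhood of any prescribed $s_0\in\C$.

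The hard part will be producing such a test point $t_0$. If no $t_0\in k_\infty^\times$ makes $l_\infty(\pi_\infty(h_{\omega_2}(t_0))v)$ non-zero, then varying both $\varphi_\infty$ and the transverse localization of the section would force $l_\infty$ to vanish on a dense subspace of $\pi_\infty$, contradicting the non-vanishing of the global Fourier coefficient $L_{\Psi_s}(\varphi)$ guaranteed by our choice of $\varphi$. The secondary verifications---that $\phi\ast v$ remains in the smooth vectors of $\pi_\infty$ and, after $K_\infty$-averaging, in the space of admissible archimedean data, and that the Iwasawa-type localization of the first step introduces no boundary contributions---are standard features of the real-group setting and should cause no essential difficulty.
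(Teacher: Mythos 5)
Your proposal follows essentially the same route as the paper's proof: reduce $d_\infty$ via the open orbit and a suitably chosen archimedean section to a Mellin integral $\int_{k_\infty^\times} L_{\Psi_s}\bk{h_{\omega_2}\bk{t}\varphi_\infty}\FNorm{t}^{5s}\,dt$, insert $\hat\phi$ by letting a Schwartz function act through $x_{2\alpha+\beta}$, and localize $\hat\phi$ to a small compact set to get analyticity and non-vanishing near $s_0$. The only substantive difference is your final paragraph: what you call the hard part is handled in the paper more directly (and your density argument, which is vaguer than necessary, reduces to the same thing) -- since the functional arising in the integral is the global coefficient $L_{\Psi_s}$ with the other places fixed, one simply chooses $\varphi$ with $L_{\Psi_s}\bk{\varphi}\not\equiv 0$, translates so it is non-zero at the identity, and places the bump $\hat\phi$ near $t=1$, so no separate search for $t_0$ is needed.
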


\begin{proof}
Denote $\varphi_{\infty}=\otimes_{\nu|\infty}v_{\nu}$, arguing as in the non-Archimedean case it holds
\[
d_{\infty}\bk{s,\varphi_{\infty}, f_{s,\infty}}= \int\limits_{k_{\infty}^{\times}} L_{\Psi_s}\bk{h_{\omega_2}\bk{t} \varphi_{\infty}} \FNorm{t}^{5s}\, dt \ .
\]
Recall from \cite[2.6]{MR727854} that any Schwartz function 
$\phi\in\Sch{k_{\infty}}$ acts on $\pi_{\infty}$ by
\[
\phi\ast v=\int_{k_{\infty}} \phi\bk{r} \pi_{\infty}(x_{2\alpha+\beta}\bk{r} v)\, dr \ .
\]
As in the non-Archimedean case it holds
\[
d_{\infty}\bk{s,\phi\ast \varphi_{\infty}, f_{s,\infty}}= \int\limits_{k_{\infty}^{\times}}\hat{\phi}\bk{t} L_{\Psi_s}\bk{h_{\omega_2}\bk{t} \varphi_{\infty}} \FNorm{t}^{5s}\, dt \ .
\]
Recall that $L_{\Psi_s}\bk{\varphi}\neq{0}$. The function $\FNorm{t}^{5s}$ is $C^{\infty}$ in $t$ and analytic in $s$ on compact sets of the form $\set{t\mvert \FNorm{t-1}\less\epsilon}\times \set{s\mvert \FNorm{s-s_0}\less\epsilon}$. For $\phi\in\Sch{k_{\infty}}$ such that $\hat{\phi}$ is non-zero with compact support in $\set{t\mvert \FNorm{t-1}\less\epsilon}$ the function $d_{\infty}\bk{s,\phi\ast \varphi_{\infty}, f_{s,\infty}}$ is analytic in $\set{s\mvert \FNorm{s-s_0}\less\epsilon}$. One can choose $\epsilon$ and $\phi$ such that also
 $d_{\infty}\bk{s_0,\phi\ast \varphi_{\infty}, f_{s,\infty}}\neq 0$.
\end{proof}

\section{Application - $\Theta$-Lift For The Dual Pair $\bk{S_3,G_2}$}
The theta correspondence $\Theta_H$ for the dual pair $\bk{S_3, G_2}$ in the group $H\rtimes S_3$ has been studied in \cite{MR1918673}. The minimal representation $\Pi$ of $H$ can be extended to the group $H\rtimes S_3$. A cuspidal representation $\pi$ belongs to the image of $\Theta_H$ if 
\[
\integral{G}\varphi\bk{g} F\bk{g}dg\neq 0
\]
for $\varphi$ in the space $\pi$ and $F$ in the space of the minimal representation $\Pi$. It was proven in \cite{MR1918673} that any such representation $\pi$ supports the split Fourier coefficient. Besides, $\pi$ is a non-tempered representation and $\Lfun^S\bk{\pi,s,\st}$ has a double pole at $s=2$. Taking the residue (of depth $2$) at $s=2$ for the main equality we obtain the converse, i.e. the double pole of the standard $\Lfun$-function at $s=2$ characterizes the image of $\Theta_H$. In other words
\begin{Thm}
For a cuspidal representation $\pi$ of $G\bk{\A}$ that supports the split Fourier coefficient the following statements are equivalent
\begin{enumerate}
\item $\Lfun^S\bk{s,\pi,\st}$ has a double pole at $s=2$.
\item $\Theta_H\bk{\pi}\neq 0$.
\end{enumerate}
\end{Thm}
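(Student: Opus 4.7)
The implication $(2)\Rightarrow(1)$ is already established in \cite{MR1918673}, as is the fact that any representation in the image of $\Theta_H$ supports the split Fourier coefficient. So the plan is to prove $(1)\Rightarrow(2)$ by taking the depth-$2$ residue at the appropriate point of the main identity from \autoref{thm:Main}.

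First I would rewrite the main identity as $\zint(s,\varphi,f)=\Lfun^S(5s-2,\pi,\st)\,d_S(s,\varphi_S,f_S)$ and note the change of variable: a pole of $\Lfun^S(\,\cdot\,,\pi,\st)$ at $w=2$ corresponds to a pole of the right hand side at $s=4/5$, and $(s-4/5)^{2}\Lfun^S(5s-2,\pi,\st)$ has a finite nonzero limit at $s=4/5$ precisely when $\Lfun^S$ has a double pole at $w=2$. I would also record that the normalizing factor $j(s)=\zeta(5s)\zeta(5s-1)^{2}\zeta(10s-4)$ is finite and nonzero at $s=4/5$, so it plays no role in the residue analysis.

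Next, assuming $(1)$, I would choose the global section $f_s$ to be spherical outside $S$ and, on $S$, choose $f_S$ and $\varphi_S$ so that $d_S(s,\varphi_S,f_S)$ is holomorphic and nonzero in a neighbourhood of $s=4/5$; this is permitted by \autoref{thm:ramified}. With this data, the right hand side of the main identity has a double pole at $s=4/5$, hence so does $\zint(s,\varphi,f)$. Computing the double residue on the left,
\[
\lim_{s\to 4/5}(s-4/5)^{2}\zint(s,\varphi,f)=\integral{G}\varphi(g)\,F_{\Pi}(g)\,dg,
\]
where $F_\Pi(g)=\lim_{s\to 4/5}(s-4/5)^{2}E^{\ast}(g,f_s)$. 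By \autoref{thm:eisenstein}, $F_\Pi$ lies in the minimal representation $\Pi$ of $H$, and (using that the double pole is attained on the spherical section outside $S$) the chosen data ensures $F_\Pi\not\equiv0$. The non-vanishing of the left-hand double residue thus produces $\varphi\in\pi$ and $F_{\Pi}\in\Pi$ with $\int_{G(k)\backslash G(\A)}\varphi\,F_\Pi\neq 0$, which is exactly the non-vanishing condition defining $\Theta_H(\pi)\neq 0$.

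The main subtlety, and the step I would be most careful about, is coordinating the choices of data: the double pole of $E^{\ast}$ requires a section whose spherical components survive outside $S$, while the non-vanishing of $d_S$ requires freedom to choose ramified components inside $S$. These are compatible because the factorization in \autoref{thm:ramified} leaves the places outside $S$ spherical, and on those places \autoref{thm:eisenstein} guarantees the double pole is attained. Beyond this, one should verify that the limit and the integral over $G(k)\backslash G(\A)$ may be interchanged to identify $(s-4/5)^{2}\zint$ with the pairing of $\varphi$ against $F_\Pi$; this follows from cuspidality of $\varphi$ (rapid decay) and the standard meromorphic continuation of $E^{\ast}$, and should be essentially routine.
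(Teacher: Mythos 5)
Your proposal is correct and follows essentially the same route as the paper: the implication $(2)\Rightarrow(1)$ is quoted from \cite{MR1918673}, and $(1)\Rightarrow(2)$ is obtained by taking the depth-$2$ residue of the main identity at $s=4/5$ (i.e.\ at $s=2$ in the $\Lfun$-variable $5s-2$), using \autoref{thm:ramified} to make $d_S$ holomorphic and nonzero there and \autoref{thm:eisenstein} to identify the double residue of $E^\ast$ with a vector in the minimal representation $\Pi$, so that the nonvanishing pairing $\int_{G(k)\backslash G(\A)}\varphi\,F_\Pi\neq 0$ is exactly the criterion $\Theta_H(\pi)\neq 0$. Your added care about the change of variable, the harmlessness of $j(s)$ at $s=4/5$, and the limit--integral interchange simply makes explicit what the paper leaves implicit.
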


\appendix
\appendixpage
\section{Computing ${F\bk{\cdot,s}\conv P\bk{\cdot,s}}$}
\label{App:Computing F(,s)}
Recall that
\[
P(s)= P_0 A_0 + P_1 A_1
\]
and by \cite{MR1696481}
\[
A_0=\Id_{K},\quad
A_1=q^{-3}\bk{\Id_{K}+\Id_{K\omega_1\bk{\unif}K}} \ ,
\]
hence
\begin{equation}
\label{eq:Convolution formula}
F^\ast \bk{\cdot,s}\conv P\bk{\cdot,s}=
j\bk{s} \frac{\bk{P_0\bk{s}-q^{-3}P_1\bk{q^{-s}}} F\bk{\cdot,s}-q^{-3} P_1\bk{q^{2-5s}} F\bk{\cdot,s}\conv 1_{K\omega_1 K}\bk{\cdot}}
{\zfun\bk{5s-1}\zfun\bk{5s+1}\zfun\bk{5s-2}} \ .
\end{equation}
We shall compute each summand separately. In this section we prove the following result.
\begin{Prop}
\label{lemma:convolution}
The following holds for $\bk{F\bk{\cdot,s}\conv P\bk{\cdot,s}}$.
\begin{enumerate}
\item $\bk{F^\ast \bk{\cdot,s}\conv P\bk{\cdot,s}}\in\M_{\Psi_s}$.
\item $\bk{F^\ast \bk{\cdot,s}\conv P\bk{\cdot,s}}\bk{g}=0$ unless $g\in S_{\Psi_s}UTK$.
\item Let $t= h_{\alpha}\bk{t_1} h_{\beta}\bk{t_2}\in{T}$. If $t_1,\frac{t_2}{t_1}\in\mO$ it holds
\begin{equation}
\label{eq:value of convolution}
\bk{F^\ast \bk{\cdot,s}\conv P\bk{\cdot,s}}\bk{t}=\piece{\frac{1+q^{1-5s}}{\zfun\bk{5s+1}} \FNorm{\frac{t_2}{t_1}}\FNorm{t_1}^{5s},& \FNorm{\frac{t_1^2}{t_2}}\less{1}\\ 
\frac{1+q^{1-5s}}{\zfun\bk{5s+1}}\FNorm{\frac{t_2}{t_1}}^{5s} \FNorm{t_1},& \FNorm{\frac{t_1^2}{t_2}}\more{1} \\
\frac{1+2q^{1-5s}}{\zfun\bk{5s+1}}\FNorm{t_1}^{5s+1},& \FNorm{\frac{t_1^2}{t_2}}={1}} \ ,
\end{equation}
otherwise $\bk{F^\ast \bk{\cdot,s}\conv P\bk{\cdot,s}}\bk{t}=0$.
\end{enumerate}
\end{Prop}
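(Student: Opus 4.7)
My plan is to take the formula \eqref{eq:Convolution formula} as the starting point, so that the whole computation is reduced to evaluating $F\bk{\cdot,s}\conv 1_{K\omega_1\bk{\unif}K}$ (the convolution with $A_0=\Id_K$ is trivial). The input is the explicit description of $F\bk{\cdot,s}$ that is produced earlier in this appendix: both its support and its values on a complete set of representatives of $U\backslash G/K$ are known. All three parts of the proposition will be read off from this data once the convolution is carried out.

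Parts (1) and (2) are essentially formal. The function $F\bk{\cdot,s}$ lies in $\mathcal{M}_{\Psi_s}$ by construction, and right convolution against any bi-$K$-invariant element of $\Hecke$ preserves the $\Psi_s$-equivariance from the left, which gives (1). For (2), the explicit description places the support of $F\bk{\cdot,s}$ inside $S_{\Psi_s}UTK$, and $1_{K\omega_1\bk{\unif}K}$ has compact support; using the Bruhat--Iwasawa decomposition $G=S_{\Psi_s}UTK$ together with the left $\Psi_s$-equivariance one checks that no representative of $K\omega_1\bk{\unif}K$ can push the support outside $S_{\Psi_s}UTK$.

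The substantive work is in (3). The plan is to fix an explicit set of representatives $\{x_i\}$ for the decomposition $K\omega_1\bk{\unif}K=\bigsqcup_i x_iK$, so that
\[
\bk{F\bk{\cdot,s}\conv 1_{K\omega_1\bk{\unif}K}}\bk{t}=\sum_i F\bk{tx_i^{-1},s}.
\]
For a torus element $t=h_\alpha\bk{t_1}h_\beta\bk{t_2}$, one performs an Iwasawa decomposition of each $tx_i^{-1}$ as $\sigma u t' k$ with $\sigma\in S_{\Psi_s}$, $u\in U$, $t'\in T$, $k\in K$, and reads off the value of $F$ from its tabulated formula. The three cases in \eqref{eq:value of convolution} arise from three different patterns: the quantity $\FNorm{t_1^2/t_2}$ governs which of the $x_i$ bring $tx_i^{-1}$ into the support of $F$, and how the resulting Iwasawa decomposition distributes the mass between the torus factor $t'$ and the $\Psi_s$-character from $u$. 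The integrality conditions $t_1,t_2/t_1\in\mO$ also emerge from this analysis, since otherwise no representative produces a nonzero contribution.

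Finally the pieces are assembled: multiply the resulting sum by $-q^{-3}P_1\bk{q^{2-5s}}$, add the direct contribution $\bk{P_0\bk{q^{2-5s}}-q^{-3}P_1\bk{q^{2-5s}}}F\bk{t,s}$, and multiply by the overall factor $j\bk{s}/\bk{\zfun\bk{5s-1}\zfun\bk{5s+1}\zfun\bk{5s-2}}$. The main obstacle will be the bookkeeping in this last step: one must verify that the cancellation between $j\bk{s}=\zfun\bk{5s}\zfun\bk{5s-1}^2\zfun\bk{10s-4}$ and the denominator, combined with the polynomial factors $P_0\bk{q^{2-5s}}$ and $P_1\bk{q^{2-5s}}$, collapses the sum of contributions from the three orbits exactly to the tidy expressions $\bk{1+q^{1-5s}}/\zfun\bk{5s+1}$ (and $1+2q^{1-5s}$ on the boundary stratum $\FNorm{t_1^2/t_2}=1$). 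This is a direct but delicate algebraic manipulation; modulo this computation, the proposition follows.
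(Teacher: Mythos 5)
Your route is the same as the paper's: start from \autoref{eq:Convolution formula}, feed in the explicit values of $F\bk{\cdot,s}$, decompose $K\omega_1\bk{\unif}K$ into single cosets and evaluate the resulting finite sum case by case in $\FNorm{t_1^2/t_2}$. But the key step is set up incorrectly. Since $F\bk{\cdot,s}$ is right $K$-invariant but \emph{not} left $K$-invariant, the identity $\bk{F\conv\Id_{K\omega_1\bk{\unif}K}}\bk{g}=\sum_i F\bk{gb_i^{-1}}$ holds only for a decomposition $K\omega_1\bk{\unif}K=\coprod_i Kb_i$, i.e.\ with $K$ on the \emph{left}. With your decomposition $K\omega_1\bk{\unif}K=\coprod_i x_iK$ the expression $\sum_i F\bk{tx_i^{-1},s}$ is not even well defined (replacing $x_i$ by $x_ik$ changes $F\bk{tx_i^{-1}}$), and the correct formula for that decomposition is $\sum_i F\bk{tx_i,s}$ (using that the double coset is inversion-stable). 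This is not a pedantic point: the representatives available in the literature and tabulated in the paper are of the form $b_i'K$ with $b_i'=n_it_i$ in the Borel, and the paper must spend a separate step converting them into representatives of cosets $Kb_i$, via the Cartan antiinvolution and the longest Weyl element; plugging the tabulated $b_i'$ directly into your formula would produce wrong intermediate sums. Your plan needs this conversion built in.

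A second gap is your claim that part (2) is formal. The support of $F\bk{\cdot,s}\conv\Id_{K\omega_1\bk{\unif}K}$ is a priori contained in $S_{\Psi_s}UTK\cdot K\omega_1\bk{\unif}K$, which is much larger than $S_{\Psi_s}UTK$; the vanishing off $S_{\Psi_s}UTK$ is not a support-translation statement but comes from cancellation forced by $\Psi_s$ (exactly as in the computation of $F$ itself, where the conductor of $\psi$ kills the terms violating \autoref{eq:Conditions for m}). So one must actually evaluate the convolution at the non-toral representatives $h_\alpha\bk{t_1}h_\beta\bk{t_2}x_\alpha\bk{d}$ with $\FNorm{d}>1$ and verify it vanishes, as the paper does (it presents the toral case and asserts the non-toral case is handled similarly). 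Finally, be aware that what you call delicate bookkeeping is the actual content of the appendix: the four-case evaluation according to $\FNorm{t_1}$, $\FNorm{t_2/t_1}$ and $\FNorm{t_1^2/t_2}$, with the cancellations against $j\bk{s}$, $P_0$, $P_1$. Deferring it is fine for a plan, but the proposition is only established once those sums are carried out with the corrected coset representatives.
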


\subsection{The spaces $\M_{\Psi_s}, \M^0_{\Psi_s}$}
In this subsection we list some properties of $\M_{\Psi_s}$ and $\M^0_{\Psi_s}$ which will be used in this section and in \autoref{App:Computing D-psi and convolution}. By Iwasawa decomposition any function $H\in \M_{\Psi_s}$ is determined by the values it attains on $B_M/(B_M\cap K)$, i.e. on the elements
\[
g=h_\alpha\bk{t_1} h_\beta\bk{t_2}x_{\alpha}\bk{d} \ ,
\]
where $d\in F/\mO$. In this appendix if $d\in \mO$ we choose a representative $d=1$.

Note that for any positive root $\gamma$ and $\FNorm{d}\geq{1}$ one has
\begin{align}
\label{eq:transpose with respect to cartan decompostion}
x_{-\gamma}(d)&=x_\gamma\bk{d^{-1}} h_{\gamma}\bk{d^{-1}} k \nonumber \\
x_{\gamma}(d)&=h_{\gamma}\bk{d} x_{-\gamma}\bk{d}  k' \ ,
\end{align}
for some $k,k'\in K$.

Using the invariance properties one easily checks the following lemma.
\begin{Lem} Let $g=h_\alpha\bk{t_1}h_\beta\bk{t_2}x_{\alpha}\bk{d}$.
\begin{enumerate}
\item Let  $H\in \M_{\Psi_s}$. Then $H\bk{g}=0$ unless
\begin{equation}
\label{eq:Condition for m without Spsi invariance}
t_1,\ dt_1+\frac{t_2}{t_1},\ 2d\frac{t_2}{t_1}+d^2t_1\in\mO \ .
\end{equation}
\item Let  $H\in \M^0_{\Psi_s}$. Then $H\bk{g}=0$ unless
\begin{equation}
\label{eq:Conditions for m}
t_1,\frac{t_2}{t_1},d^2t_1,d\frac{t_2}{t_1}\in\mO \ .
\end{equation}
\end{enumerate}
\end{Lem}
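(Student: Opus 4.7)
The plan is to exploit the two equivariance properties of $H$ simultaneously. Since $g = h_\alpha(t_1) h_\beta(t_2) x_\alpha(d) \in M$ and the Levi $M$ normalises the unipotent radical $U$ of $P$, for every $u_K \in U \cap K = U(\mO)$ we have $g u_K g^{-1} \in U$. Writing $g u_K = (g u_K g^{-1}) \cdot g$ and applying $H$, the left $(U,\Psi_s)$-equivariance together with the right $K$-invariance force
\[
\Psi_s\bk{g u_K g^{-1}} H(g) = H(g u_K) = H(g),
\]
so either $H(g) = 0$ or $\Psi_s(g u_K g^{-1}) = 1$ for every $u_K \in U(\mO)$.

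To obtain the three conditions of part (1), I would write $u_K = u(r_1, \ldots, r_5)$ with $r_i \in \mO$ and expand $g u_K g^{-1}$ in canonical $U$-form. Since $\Psi_s$ is supported on the root subgroups $x_{\alpha+\beta}$ and $x_{2\alpha+\beta}$, only the coefficients at these two coordinates contribute. The conjugation of $u_K$ by $x_\alpha(d)$ is computed via the $G_2$-Chevalley commutators $[x_\alpha(d), x_\gamma(r_i)]$ for $\gamma \in \{\beta, \alpha+\beta, 2\alpha+\beta\}$ (all other $\gamma \in U$ commute with $x_\alpha$ since $\alpha + \gamma \notin \Phi$), which introduce additional terms in $x_{\alpha+\beta}$ and $x_{2\alpha+\beta}$ polynomial in $d$; the subsequent conjugation by $h_\alpha(t_1) h_\beta(t_2)$ rescales the coefficient at $x_\gamma$ by $\gamma\bk{h_\alpha(t_1) h_\beta(t_2)}$, which evaluates to $t_2/t_1$ at $\gamma = \alpha+\beta$ and to $t_1$ at $\gamma = 2\alpha+\beta$. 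Collecting terms, the coefficients of $r_3, r_2, r_1$ in $\Psi_s\bk{g u_K g^{-1}}$ are, up to units arising from the Chevalley structure constants of $G_2$ (which are units under the running assumption $\nu\nmid 2,3$), equal respectively to $t_1$, $dt_1 + t_2/t_1$ and $2d(t_2/t_1)+d^2t_1$. Requiring $\Psi_s\bk{g u_K g^{-1}} = 1$ for every $(r_1,\ldots,r_5) \in \mO^5$ yields the three conditions of \eqref{eq:Condition for m without Spsi invariance}.

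For part (2), I would use the additional invariance under $S_{\Psi_s}$, in particular under the reflection $w_\alpha$. Combining $w_\alpha h_\alpha(t_1) h_\beta(t_2) w_\alpha^{-1} = h_\alpha(t_2/t_1) h_\beta(t_2)$ with $w_\alpha x_\alpha(d) w_\alpha^{-1} = x_{-\alpha}(\pm d)$ and rewriting $x_{-\alpha}(\pm d)$ via \eqref{eq:transpose with respect to cartan decompostion} produces an Iwasawa representative of $w_\alpha g$ of the form $h_\alpha(t_1^\sharp) h_\beta(t_2) x_\alpha(d^\sharp) k'$ with $k' \in K$, where the values $(t_1^\sharp, d^\sharp)$ depend on whether $\FNorm{d}\leq 1$ or $\FNorm{d}>1$. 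Applying part (1) to $g$ and to this Iwasawa representative gives two polynomial systems. For $\FNorm{d}\leq 1$, the condition $t_1^\sharp \in \mO$ forces $t_2/t_1 \in \mO$, whence $d^2 t_1, d(t_2/t_1) \in \mO$ since $d \in \mO$. For $\FNorm{d}>1$, the two cubic conditions on $d(t_2/t_1)$ and $d^2 t_1$ produced by the two systems are linearly independent with determinant a unit under the hypothesis $\nu\nmid 2,3$, so they force $d^2 t_1, d(t_2/t_1) \in \mO$; combining with the first system yields $t_2/t_1 \in \mO$ as well, giving \eqref{eq:Conditions for m}.

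The main technical difficulty is the explicit book-keeping with the Chevalley commutator formulas of $G_2$, whose integer structure constants can reach $\pm 3$ and whose iterated brackets generate correction terms in $x_{3\alpha+2\beta}$ upon re-ordering the resulting product into the canonical form of $U$. Fortunately, since $\Psi_s$ is supported only on $x_{\alpha+\beta}$ and $x_{2\alpha+\beta}$, these correction terms fall outside the support of $\Psi_s$, so only the leading contributions at $x_{\alpha+\beta}$ and $x_{2\alpha+\beta}$ need to be tracked explicitly.
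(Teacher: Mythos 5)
Your proposal is correct and uses exactly the invariance argument the paper alludes to with its remark that ``one easily checks the following lemma'': for part (1) you extract the necessary condition $\Psi_s(g u_K g^{-1}) = 1$ for all $u_K \in U(\mO)$ from the left $(U,\Psi_s)$-equivariance and right $K$-invariance, and for part (2) you exploit the extra $w_\alpha$-invariance, rewrite $w_\alpha g$ in Iwasawa form via \eqref{eq:transpose with respect to cartan decompostion}, and combine the two resulting linear systems (whose $2\times2$ determinant $\pm 3$ is a unit since $\nu\nmid 2,3$). The Chevalley structure-constant bookkeeping you defer is indeed the only remaining computation, and your observation that only the $x_{\alpha+\beta}$ and $x_{2\alpha+\beta}$ coordinates contribute to $\Psi_s$ is precisely why it stays manageable.
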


The following lemma will be useful in the computation of the second summand of \autoref{eq:Convolution formula}.

\begin{Lem}
\label{lemma:convolution of right K-invariant functions}
Let $H\in \M_{\Psi_s}$, $t\in T$  and $\Id_{KtK}$ be a characteristic function of the double coset $KtK$. Then 
\[
H\ast \Id_{KtK}\bk{g}=\sum_i H\bk{gb_i^{-1}} \ ,
\]
where $KtK=\coprod Kb_i$. Note that the representatives $b_i$ can be taken in the Borel subgroup $B$ of $G$.
\end{Lem}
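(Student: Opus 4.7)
My plan is to verify the identity by a direct change-of-variables computation. I would begin by unfolding the convolution as an integral over the support of $\Id_{KtK}$, writing
\[
\bk{H \conv \Id_{KtK}}\bk{g} = \int_G H\bk{gx^{-1}} \Id_{KtK}\bk{x}\, dx = \int_{KtK} H\bk{gx^{-1}}\, dx.
\]
Using the hypothesized decomposition $KtK = \coprod_i Kb_i$, this splits as $\sum_i \int_{Kb_i} H\bk{gx^{-1}}\, dx$. I would then parametrize each coset by $x = kb_i$ with $k \in K$, so that $gx^{-1} = gb_i^{-1} k^{-1}$. Since $H \in \M_{\Psi_s}$ is right-$K$-invariant by definition, the integrand equals the constant $H\bk{gb_i^{-1}}$ on each coset. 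Combined with the Haar normalization $\mu(K) = 1$ fixed at the start of \autoref{sec:unramified}, each coset contributes exactly $H\bk{gb_i^{-1}}$, yielding the claimed formula.

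For the final assertion that the $b_i$ can be taken in $B$, I would invoke the Iwasawa decomposition $G = KB$: given any representative $b_i \in G$, write $b_i = k_i b_i'$ with $k_i \in K$ and $b_i' \in B$; then $Kb_i = Kk_i b_i' = Kb_i'$, so replacing $b_i$ by $b_i'$ preserves the disjoint decomposition while placing each representative in the Borel.

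No serious obstacle is anticipated here: the entire argument is a one-line change of variables followed by an invocation of Iwasawa. The only points requiring care are matching conventions, specifically that convolution on the right by a bi-$K$-invariant function sends $\M_{\Psi_s}$ to itself (clear, since left-$(U,\Psi_s)$-equivariance is unaffected by right multiplication and bi-$K$-invariance of $\Id_{KtK}$ preserves right-$K$-invariance of the output), and that we chose the decomposition $KtK = \coprod Kb_i$ as a union of \emph{left} $K$-cosets so that the formula takes the stated form $H\bk{gb_i^{-1}}$ rather than $H\bk{b_i^{-1} g}$.
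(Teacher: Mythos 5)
Your proof is correct: unfolding $H\ast \Id_{KtK}$ as an integral over $KtK$, splitting along the decomposition $KtK=\coprod_i Kb_i$, using right $K$-invariance of $H\in\M_{\Psi_s}$ together with the normalization $\mu(K)=1$, and invoking the Iwasawa decomposition $G=KB$ to replace each $b_i$ by a representative in $B$ is exactly the standard verification. The paper states this lemma without proof, so your argument coincides with the intended (omitted) one, including the correct attention to the convolution convention and to using left cosets $Kb_i$ so that the formula reads $H\bk{gb_i^{-1}}$.
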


\subsection{ Computation of $F_s$}
\begin{Prop}
\label{Lemma:Computing F(,s)} Assume that $g= h_{\alpha}\bk{t_1} h_{\beta}\bk{t_2}x_\alpha\bk{d}\in M$ satisfy \autoref{eq:Conditions for m}.
It holds
\[
F\bk{g,s}=\piece{
\frac{\zfun\bk{5s-1}}{\zfun\bk{5s}}\FNorm{t_1}^{5s}\FNorm{\frac{t_2}{t_1}}\bk{1-\FNorm{\unif\frac{t_2}{t_1}}^{5s-1}},& \FNorm{d^2\frac{t_1^2}{t_2}+d}\leq{1}\\
\frac{\zfun\bk{5s-1}}{\zfun\bk{5s}}\FNorm{t_1}^{5s}\FNorm{\frac{t_2}{t_1}}\FNorm{{d^2\frac{t_1^2}{t_2}+d}}^{1-5s}\bk{1-\FNorm{\unif\bk{{d^2\frac{t_1^2}{t_2}+d}}\frac{t_2}{t_1}}^{5s-1}},& \FNorm{d^2\frac{t_1^2}{t_2}+d}\geq{1} \ .
}
\]
For $g\in M$ violating \autoref{eq:Conditions for m} we have $F\bk{g,s}=0$.
\end{Prop}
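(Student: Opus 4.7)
The plan is to evaluate $F(g,s) = \int_{k_\nu} f_s(\mu\, x_{\alpha+\beta}(r)\, g)\, \psi(r)\, dr$ pointwise in $r$ by computing an Iwasawa decomposition $\mu\, x_{\alpha+\beta}(r)\, g = p(r)\, k(r)$ with $p(r) \in P_H$ and $k(r) \in H(\mO)$. Since $f_s$ is the normalized spherical section of $\operatorname{Ind}_{P_H}^{H}\delta_{P_H}^s$, the integrand collapses to $\delta_{P_H}^s(p(r))$, and the problem reduces to tracking how the Levi part depends on $r$.

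First I would strip off the torus factor. Using $(\alpha+\beta)(h_\alpha(t_1)h_\beta(t_2)) = t_2/t_1$, the relation $x_{\alpha+\beta}(r) h_\alpha(t_1)h_\beta(t_2) = h_\alpha(t_1)h_\beta(t_2)\, x_{\alpha+\beta}(r t_1/t_2)$ combined with the substitution $r \mapsto r t_2/t_1$ (yielding a Jacobian $\FNorm{t_2/t_1}$) brings the integrand to $f_s(\mu\, h\, x_{\alpha+\beta}(r)\, x_\alpha(d))\,\psi(r t_2/t_1)$. Since $\mu = w_2 w_3 x_{-\alpha_1}(1)$, the conjugate $\mu\, h\, \mu^{-1}$ is an explicit element of $T_H$, computable from the action of $w_2 w_3$ on the root lattice of $H$; its $\delta_{P_H}^s$-value is a monomial in $(t_1,t_2)$ that can be pulled outside the integral. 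This reduces the problem to computing $\int f_s(\mu\, x_{\alpha+\beta}(r)\, x_\alpha(d))\,\psi(r t_2/t_1)\, dr$.

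The heart of the calculation is the Iwasawa decomposition of $\mu\, x_{\alpha+\beta}(r)\, x_\alpha(d)$. Expanding the commutator $[x_{\alpha+\beta}(r), x_\alpha(d)] \in \langle x_{2\alpha+\beta}, x_{3\alpha+\beta}, x_{3\alpha+2\beta}\rangle$ with explicit structure constants and then absorbing the $x_{-\alpha_1}(1)$-factor of $\mu$ yields an element whose decomposition hinges on the valuation of the quantity $d^2 t_1^2/t_2 + d$. When $\FNorm{d^2 t_1^2/t_2 + d} \leq 1$ the resulting element already sits in $P_H \cdot K_H$, producing the first branch of the formula. When $\FNorm{d^2 t_1^2/t_2 + d} > 1$ one invokes the Bruhat identity $x_\gamma(c) = h_\gamma(c)\, x_{-\gamma}(c^{-1})\, w_\gamma\, k$ in $H$ to return to $P_H \cdot K_H$, and the resulting $\delta_{P_H}^s$-contribution is the factor $\FNorm{d^2 t_1^2/t_2 + d}^{1-5s}$ appearing in the second branch.

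Finally, with the Iwasawa decomposition in hand, the $r$-integral splits along the valuation filtration of $k_\nu$: on each annulus the integrand is a monomial in $q^{-5s}$ weighted by a volume, and the character $\psi(r t_2/t_1)$ truncates the sum at the point where $\FNorm{r t_2/t_1}$ crosses $1$. The resulting geometric series sums to the factor $\zfun(5s-1)/\zfun(5s)$ together with the explicit $(1-\FNorm{\varpi(\cdots) t_2/t_1}^{5s-1})$ term, and multiplying back the prefactors extracted in the previous step gives the stated formula. The vanishing of $F(g,s)$ for $g$ violating \eqref{eq:Conditions for m} follows because in those cases the entire line $\{\mu\, x_{\alpha+\beta}(r)\, g : r \in k_\nu\}$ misses the support $P_H \cdot K_H$ of $f_s$. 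The main obstacle is the core Iwasawa computation: since $\mu \notin G$, every commutator relation must be translated into $D_4$-root arithmetic, and the $G_2$ structure constants must be handled precisely to confirm that the polynomial $d^2 t_1^2/t_2 + d$ is in fact the correct case-split indicator.
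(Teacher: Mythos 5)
Your overall strategy matches the paper's: write $\mu\, x_{\alpha+\beta}(r)\, g$ in Iwasawa form, pull out $\delta_{P_H}^s$ of the Levi part, and evaluate the remaining $r$-integral over annuli as a geometric series truncated by $\psi$.  However, there is a genuine error in your first reduction step.  You claim that $\mu\, h\, \mu^{-1}$ is an explicit element of $T_H$ (with $h = h_\alpha(t_1) h_\beta(t_2)$) and that this lets you peel off a monomial in $(t_1,t_2)$ and reduce to the torus-free integral $\int f_s(\mu\, x_{\alpha+\beta}(r)\, x_\alpha(d))\,\psi(r t_2/t_1)\, dr$.  This is false: $\mu = w_2 w_3\, x_{-\alpha_1}(1)$ contains the unipotent factor $x_{-\alpha_1}(1)$, which does \emph{not} commute with $h$.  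Rather, $\alpha_1(h) = t_1^2/t_2$, so passing $h$ to the left across $x_{-\alpha_1}(1)$ turns it into $x_{-\alpha_1}(t_1^2/t_2)$.  This residual unipotent element stays inside the argument of $f_s$; it is exactly what, after conjugating $w_3$ to the right and absorbing the $x_{\alpha_1}(d)$, $x_{-\alpha_3}(d)$ contributions coming from $x_\alpha(d)$, collapses to $x_{-\alpha_1}\bigl(d^2\tfrac{t_1^2}{t_2}+d\bigr)$ and governs the case split.

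The fact that the final formula depends on $d^2 \tfrac{t_1^2}{t_2} + d$ already shows your reduction cannot be right: the reduced integral you propose depends only on $d$ and on the character scale $t_2/t_1$, and one cannot recover the separate parameter $t_1^2/t_2$ from those two.  You later say the decomposition ``hinges on the valuation of $d^2 t_1^2/t_2 + d$'' --- correct, but inconsistent with the earlier claim that the torus had been fully extracted.  To repair the argument, follow the paper: after moving $h$ to the left, keep the conjugated factor $x_{-\alpha_1}(t_1^2/t_2)$ in place, push $w_3$ to the right turning $x_{\alpha_3}(d)$ into $x_{-\alpha_3}(d)$, and use $x_{-\gamma}(d) = x_\gamma(d^{-1}) h_\gamma(d^{-1}) k$ (for $\FNorm{d}\geq 1$) to assemble the single unipotent $x_{-\alpha_1}(d^2 t_1^2/t_2 + d)$ before doing the Bruhat/Iwasawa case split.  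With that correction the rest of your outline (geometric series giving $\zfun(5s-1)/\zfun(5s)$ times the truncation factor, and the vanishing when the conditions on $g$ fail because the orbit misses $P_H K_H$) goes through as in the paper.
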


\begin{proof}
We recall that
\[
F\bk{g,s}=\int\limits_{F} f_s\bk{\mu x_{\alpha+\beta}\bk{r} g} \psi\bk{r} dr \ ,
\]
where $f_s$ here the spherical section such that $f_s\bk{1}=1$.
For $g$ as above we have
\begin{align*}
F\bk{g,s}=&\int\limits_{F} f_s\bk{w_2w_3x_{-\alpha_1}\bk{1} x_{\alpha+\beta}\bk{r}  h_{\alpha}\bk{t_1} h_{\beta}\bk{t_2} x_{\alpha}\bk{d}} \psi\bk{r}  dr = \\
=&\FNorm{t_1}^{5s} \int\limits_{F} f_s\bk{w_2w_3x_{-\alpha_1}\bk{\frac{t_1^2}{t_2}} x_{\alpha_2+\alpha_3}\bk{\frac{t_1}{t_2} r} x_{\alpha_1}\bk{d}x_{\alpha_3}\bk{d}} \psi\bk{r} dr \ .
\end{align*}
Making a change of variables $r'=\frac{t_1}{t_2}$ and conjugating $w_3$ to the right we get
\begin{align*}
F\bk{g,s}=&\FNorm{t_1}^{5s}\FNorm{\frac{t_2}{t_1}} \int\limits_{F} f_s\bk{w_2 x_{-\alpha_1}\bk{\frac{t_1^2}{t_2}} x_{\alpha_2}\bk{r'} x_{\alpha_1}\bk{d}x_{-\alpha_3}\bk{d}} \psi\bk{\frac{t_2}{t_1} r'} dr' \ .
\end{align*}
Due to \autoref{eq:transpose with respect to cartan decompostion} we have
\begin{align*}
F\bk{m,s}=&\FNorm{t_1}^{5s}\FNorm{\frac{t_2}{t_1}} \int\limits_{F} f_s\bk{w_2 x_{-\alpha_1}\bk{\frac{t_1^2}{t_2}} x_{\alpha_2}\bk{r'} x_{\alpha_1}\bk{d}x_{\alpha_3}\bk{d^{-1}} h_{\alpha_3}\bk{d^{-1}}} \psi\bk{\frac{t_2}{t_1} r'} dr' \ .
\end{align*}
Conjugating the elements associated with $\alpha_3$ to the left and using a similar equality for $\alpha_1$ yields
\begin{align*}
F\bk{g,s}=&\FNorm{\frac{t_1}{d}}^{5s}\FNorm{\frac{t_2}{t_1}}  \int\limits_{F} f_s\bk{w_2 x_{-\alpha_1}\bk{\frac{t_1^2}{t_2}} x_{\alpha_2}\bk{\frac{r'}{d}} x_{\alpha_1}\bk{d}} \psi\bk{\frac{t_2}{t_1} r'} dr' = \\
=& \FNorm{\frac{t_1}{d}}^{5s}\FNorm{\frac{t_2}{t_1}}  \int\limits_{F} f_s\bk{w_2 x_{-\alpha_1}\bk{\frac{t_1^2}{t_2}} x_{\alpha_2}\bk{\frac{r'}{d}} h_{\alpha_1}\bk{d} x_{-\alpha_1}\bk{d}} \psi\bk{\frac{t_2}{t_1} r'} dr' = \\
=& \FNorm{t_1}^{5s}\FNorm{\frac{t_2}{t_1}}  \int\limits_{F} f_s\bk{w_2 x_{\alpha_2}\bk{r'} x_{-\alpha_1}\bk{d^2\frac{t_1^2}{t_2}+d} } \psi\bk{\frac{t_2}{t_1} r'} dr' \ .
\end{align*}
If $\FNorm{d^2\frac{t_1^2}{t_2}+d}\leq{1}$ we have
\begin{align*}
F\bk{g,s}=& \FNorm{t_1}^{5s}\FNorm{\frac{t_2}{t_1}}  \int\limits_{F} f_s\bk{w_2 x_{\alpha_2}\bk{r'}} \psi\bk{\frac{t_2}{t_1} r'} dr' \ .
\end{align*}

The integral is evaluated by separation to $\mO$ and $F\setminus\mO$ and once again using \autoref{eq:transpose with respect to cartan decompostion}. It holds
\begin{align*}
&\int\limits_{F} f_s\bk{w_2 x_{\alpha_2}\bk{r'}} \psi\bk{\frac{t_2}{t_1} r'} dr'= \int\limits_{\mO} f_s\bk{w_2 x_{\alpha_2}\bk{r'}} \psi\bk{\frac{t_2}{t_1} r'} dr' + \int\limits_{F\setminus\mO} f_s\bk{w_2 x_{\alpha_2}\bk{r'}} \psi\bk{\frac{t_2}{t_1} r'} dr' =\\
&= 1 + \int\limits_{F\setminus\mO} \FNorm{r'}^{-5s} \psi\bk{\frac{t_2}{t_1} r'} dr' = 1+\sum\limits_{1\less q^k\less \FNorm{\frac{t_2}{t_1}}} q^{-5ks} \int\limits_{\FNorm{r}=q^k} \psi\bk{r}dr -\FNorm{\frac{t_2}{t_1}}^{-5s} \int\limits_{\FNorm{r}=\FNorm{\frac{t_2}{t_1}}} \psi\bk{r}dr \\
&= \frac{\zfun\bk{5s-1}}{\zfun\bk{5s}} \bk{1-\FNorm{\unif\frac{t_2}{t_1}}^{5s-1}} \ .
\end{align*}
And hence
\[
F\bk{g,s}= \frac{\zfun\bk{5s-1}}{\zfun\bk{5s}}\FNorm{t_1}^{5s}\FNorm{\frac{t_2}{t_1}}\bk{1-\FNorm{\unif\frac{t_2}{t_1}}^{5s-1}} \ .
\]

Assume now that $\FNorm{d^2\frac{t_1^2}{t_2}+d}\more{1}$ and denote $p=d^2\frac{t_1^2}{t_2}+d$. It holds
\begin{align*}
F\bk{g,s}=& \FNorm{t_1}^{5s}\FNorm{\frac{t_2}{t_1}}  \int\limits_{F} f_s\bk{w_2 x_{\alpha_2}\bk{r'} x_{\alpha_1}\bk{p^{-1}} \check{\alpha_1}\bk{p^{-1}}} \psi\bk{\frac{t_2}{t_1} r'} dr' =\\
=& \FNorm{\frac{t_1}{p}}^{5s}\FNorm{\frac{t_2}{t_1}}  \int\limits_{F} f_s\bk{w_2 x_{\alpha_2}\bk{\frac{r'}{p}}} \psi\bk{\frac{t_2}{t_1} r'} dr'= \tag{$r''=\frac{r'}{p}$} \\
=& \FNorm{\frac{t_1}{p}}^{5s}\FNorm{p\frac{t_2}{t_1}}  \int\limits_{F} f_s\bk{w_2 x_{\alpha_2}\bk{r''}} \psi\bk{\frac{t_2}{t_1} pr''} dr'' \ .
\end{align*}
If $\FNorm{p\frac{t_2}{t_1}}\leq{1}$ then as in the previous case
\[
F\bk{g,s}=\frac{\zfun\bk{5s-1}}{\zfun\bk{5s}}\FNorm{t_1}^{5s}\FNorm{\frac{t_2}{t_1}}\FNorm{{d^2\frac{t_1^2}{t_2}+d}}^{1-5s}\bk{1-\FNorm{\unif\bk{{d^2\frac{t_1^2}{t_2}+d}}\frac{t_2}{t_1}}^{5s-1}} \ .
\]
If on the other hand $\FNorm{p\frac{t_2}{t_1}}\more{1}$ in a similar way to the previous cases it holds
\begin{align*}
&\int\limits_{F} f_s\bk{w_2 x_{\alpha_2}\bk{r}} \psi\bk{\frac{t_2}{t_1} pr} dr = \int\limits_{\mO} \psi\bk{\frac{t_2}{t_1} pr} dr + \int\limits_{F\setminus\mO} \FNorm{r}^{-5s} \psi\bk{\frac{t_2}{t_1} pr} dr =0+0  \ ,
\end{align*}
since $\psi$ is of conductor $\mO$. Note that when $\FNorm{p\frac{t_2}{t_1}}\more{1}$ \autoref{eq:Conditions for m} is violated.
\end{proof}

\begin{Remark}
Direct check shows that $F\bk{g,s}$ is $S_{\Psi_s}$-invariant and hence $F\bk{\cdot,s}\in \M^0_{\Psi_s}$.
\end{Remark}

\subsection{ Decomposition of $K\omega_1(\varpi)K$ into left $K$ cosets}
We recover the list of left $K$ cosets in $K\omega_1(\varpi)K$ from \cite[Proposition 13.3 and Proposition 14.2]{MR1932327}. The decomposition of $K\omega_1\bk{\varpi}K=\coprod b'_i K$ as a union of right $K$ cosets is given described there, after listing them we will make them into left cosets. Here $b'_i=n_i t_i$ belong to Borel subgroup $B=NT$. Fix $Y$ to be Teichm\" uller representatives in $\mO$ of $\mO/\bk{\unif}$ (or any other set of representatives) and $Z$ to be a set of representatives in $\mO$ of $\mO/\bk{\unif^2}$.
\begin{center}
\begin{tabular}{|c|c|c|}
\hline
Class $mod\ M\bk{\mO}$ & \# cosets & Representatives \\ 
\hline
$h_{-\omega_2}\bk{\unif}$ & $1$ & $h_{-\omega_2}\bk{\unif}$ \\ \hline
\multirow{2}{*}{$h_{-\beta}\bk{\unif}$} &
\multirow{2}{*}{$q^6$} &  $u\bk{r_1,r_2,r_3,r_4,r_5}h_{\omega_2}\bk{\unif}$ \\ 
 & &$r_1,r_2,r_3,r_4\in Y,\ r_5\in Z$ \\ \hline
\multirow{4}{*}{$h_{\alpha}\bk{\unif} h_{\beta}\bk{\unif}$} &
\multirow{4}{*}{$q\bk{q+1}$} & $u\bk{r_1,0,0,0,0}h_{-\alpha}\bk{\unif} h_{-\beta}\bk{\unif}$ \\
& & $r_1\in Y$ \\ \cline{3-3}
& & $u\bk{0,0,0,r_4,0}x_{\alpha}\bk{z}h_{-\beta}\bk{\unif}$\\
& & $r_4,z\in Y$ \\ \hline
\multirow{4}{*}{$h_{\omega_2}\bk{\unif}$} &
\multirow{4}{*}{$q^4\bk{q+1}$} & $u\bk{r_1,r_2,0,0,r_5} h_{\beta}\bk{\unif}$ \\ 
& & $r_2,r_5\in Y,\ r_1\in Z$ \\ \cline{3-3}
& & $u\bk{0,0,r_3,r_4,r_5}x_{\alpha}\bk{z}h_{\alpha}\bk{\unif} h_{\beta}\bk{\unif}$\\ 
& & $r_3,r_5,z\in Y,\ r_4\in Z$ \\ \hline
\multirow{6}{*}{$1$} &
\multirow{6}{*}{$q^3-1$} &
$u\bk{0,0,0,0,\frac{r_5}{\unif}}$ \\
& & $r_5\in Y,\ r_5\not\equiv{0}$ \\ \cline{3-3}
& & $u\bk{\frac{r_1}{\unif},0,0,0,\frac{r_5}{\unif}}$\\ 
& & $r_1, r_5\in Y,\ r_1\not\equiv{0}$\\ \cline{3-3}
& & $u\bk{\frac{y^3r_1}{\unif},\frac{y^2r_1}{\unif},\frac{yr_1}{\unif},\frac{r_1}{\unif},\frac{r_5}{\unif}}$\\ 
& & $r_1, r_5,y\in Y,\ r_1\not\equiv{0}$\\ \hline
\end{tabular}
\end{center}
We need now to make the right coset representatives $\set{b_i'}$ into left coset representatives. Let $w_0=w_\alpha w_\beta w_\alpha w_\beta w_\alpha w_\beta\in K$ be the longest element in the Weyl group of $G$. Recall that $w_0$ send $\gamma$ to $-\gamma$ for all $\gamma\in\Phi$. Also note that $\set{w_0b_i'}$ is also a full set of representatives of right cosets.

Denote by $\theta$ the Cartan antiinvolution, fixing the torus $T$ such that $\theta\bk{x_\gamma\bk{r}}=x_{-\gamma}\bk{r}$. Let $w_0\in K$ 
be a lifting to $G$ of the longest Weyl group element such that 
$\theta\bk{w_0}=w_0$. Then 
\begin{equation}
\label{eq:left coset representatives}
K\omega_1\bk{\varpi}K=\theta \bk{K\omega_1\bk{\varpi} K}=
\theta\bk{\coprod_i w_0 b_i' K}= \coprod_i K \theta\bk{b'_i} w_0=\coprod_i K  t_i^{-1} n_i \ .
\end{equation}
Fixing $b_i=t_i^{-1} n_i$ gives a set of left coset representatives.

\subsection{Convolution}
Combining \autoref{eq:Convolution formula}, \autoref{Lemma:Computing F(,s)} and \autoref{eq:left coset representatives} the computation of the convolution is straightforward. We shall present the computation for toral elements only. The case of non-toral elements is dealt similarly.

By \autoref{lemma:convolution of right K-invariant functions} we have
\[
\bk{F\bk{\cdot,s}\conv \Id_{K\lambda_1 K}}\bk{g}=\sum_{i}  F\bk{gb_i^{-1},s} \ .
\]

Assume that $g=t= h_{\alpha}\bk{t_1}  h_{\beta}\bk{t_2}$. By $S_{\Psi_s}$-invariance we may assume that $\FNorm{\frac{t_1^2}{t_2}}\leq{1}$. The case $\FNorm{\frac{t_1^2}{t_2}}\more{1}$ follows by symmetry from $\FNorm{\frac{t_1^2}{t_2}}\less{1}$, since $F\bk{\cdot,s}\in \M_{\Psi_s}$. We can write $\bk{F\bk{\cdot,s}\conv \Id_{K\lambda_1 K}}\bk{t}$ as follows
\begin{align}
\label{eq:convolution as a sum}
\bk{F\bk{\cdot,s}\conv \Id_{K\omega_1\bk{\unif}K}}\bk{t}=& 
F\bk{h_{-\omega_1}\bk{\unif}t,s}+q^6 F\bk{h_{\omega_1}\bk{\unif} t,s}+q F\bk{h_{-\alpha}\bk{\unif} h_{-\beta}\bk{\unif} t,s}+\nonumber \\
+& q\sum\limits_{y\in{\mO/\bk{\unif}}} F\bk{h_{-\beta}\bk{\unif} t\, x_{\alpha}\bk{-\frac{y}{\unif}},s}+ q^4 F\bk{h_{\beta}\bk{\unif} t,s}+\nonumber \\
+& q^4\sum\limits_{y\in{\mO/\bk{\unif}}} F\bk{h_{\alpha}\bk{\unif}h_{\beta}\bk{\unif} t\, x_{\alpha}\bk{-\frac{y}{\unif}},s}+\nonumber \\
+& \bk{q\sum\limits_{r,y\in{\mO/\bk{\unif}}}\psi\bk{-\frac{y}{\unif}\bk{\frac{t_2}{t_1}y+t_1}r}-1} F\bk{t,s}  \ .
\end{align}

We separate this computation into four cases depending on the absolute value of $t_1$ and $t_2$. All the following results follow by applying \autoref{Lemma:Computing F(,s)} to the summands in \autoref{eq:convolution as a sum}. Denote $\FNorm{t_1}=q^{-n}$ and $\FNorm{t_2}=q^{-m}$.

\begin{enumerate}
\item Assume $\FNorm{t_2}=\FNorm{t_1}=1$:
\[
\bk{F\bk{\cdot,s}*\Id_{K\lambda_1 K}}\bk{t}=\frac{q^{6-10s}+q^{5-5s}+3q^{4-5s}+2q^2-1}{\zfun\bk{5s}}
\]
and also
\[
F\bk{1,s}=\frac{1}{\zfun\bk{5s}} \ .
\]
Plugging this into \autoref{eq:Convolution formula} yields
\begin{align*}
\bk{F^\ast \bk{\cdot,s}\conv P\bk{\cdot,s}}\bk{t}=
\frac{1+2q^{1-5s}}{\zfun\bk{5s+1}} \ .
\end{align*}

\item Assume $\FNorm{t_2}\less\FNorm{t_1}$ and $\FNorm{\frac{t_1^2}{t_2}}=1$:
\begin{align*}
\bk{F\bk{\cdot,s}*\Id_{K\lambda_1 K}}\bk{t}=\frac{\zfun\bk{5s-1}}{\zfun\bk{5s}}&\left(\frac{q^{1+5s-n-5ns}}{\zfun\bk{5ns}}+\frac{q^{5-5s-n-5ns}}{\zfun\bk{5\bk{n+2}s}}+\frac{q^{2-n-5ns}}{\zfun\bk{5ns}}+\frac{q^{4-n-5(n+1)s}}{\zfun\bk{5\bk{n+1}s}}+\right. \\
&+\left(q^3-1\right)\frac{q^{-n-5ns}}{\zfun\bk{5\bk{n+1}s}}+
q\left(2\frac{q^{1-n-5ns}}{\zfun\bk{5ns}}+(q-2)\frac{q^{2-n-5(n+1)s}}{\zfun\bk{5\bk{n-1}s}}\right)+\\
&\left.+q^4\left(2\frac{q^{-n-5(n+1)s}}{\zfun\bk{5\bk{n+1}s}}+(q-2)\frac{q^{1-n-5(n+2)s}}{\zfun\bk{5ns}}\right)\right)
\end{align*}
and also
\[
F\bk{t,s}=\frac{\zfun\bk{5s-1}}{\zfun\bk{5s}}\frac{q^{-n-5ns}}{\zfun\bk{5\bk{n+1}s}} \ .
\]
Plugging this into \autoref{eq:Convolution formula} yields
\begin{align*}
\bk{F^\ast \bk{\cdot,s}\conv P\bk{\cdot,s}}\bk{t}= 
\frac{1+2q^{1-5s}}{\zfun\bk{5s+1}}\FNorm{t_1}^{5s+1} \ .
\end{align*}

\item Assume $\FNorm{t_2}\less\FNorm{t_1}$ and $\FNorm{\frac{t_1^2}{t_2}}\less 1$:
\begin{align*}
\bk{F\bk{\cdot,s}*\Id_{K\lambda_1 K}}\bk{t}=\frac{\zfun\bk{5s-1}}{\zfun\bk{5s}}&\left(\frac{q^{1+5s-m+n-5ns}}{\zfun\bk{5\bk{m-n}s}}+\frac{q^{5-5s-m+n-5ns}}{\zfun\bk{5\bk{m-n+2}s}}+\frac{q^{1-m+n-5ns+5s}}{\zfun\bk{5\bk{m-n+1}s}}+\right.\\
&+\frac{q^{3-5ns-m+n}}{\zfun\bk{5\bk{m-n+2}s}}+q\left(\frac{q^{1-5ns-m+n}}{\zfun\bk{5\bk{m-n}s}}+(q-1)\frac{q^{2-5s-m+n-5ns}}{\zfun\bk{5\bk{m-n-1}s}}\right)+\\
&\left.+q^4\left(\frac{q^{n-m-5s-5ns}}{\zfun\bk{5\bk{m-n+1}s}}+(q-1)\frac{q^{1-10s-m+n-5ns}}{\zfun\bk{5\bk{m-n}s}}\right)\right.+\\
&\left.+\left(q^3-1\right)\frac{q^{n-m-5ns}}{\zfun\bk{5\bk{m-n+1}s}}\right)
\end{align*}
and also
\[
F\bk{t,s}=\frac{\zfun\bk{5s-1}}{\zfun\bk{5s}}\frac{q^{n-m-5ns}}{\zfun\bk{5\bk{m-n+1}s}} \ .
\]
Plugging this into \autoref{eq:Convolution formula} yields
\begin{align*}
\bk{F^\ast \bk{\cdot,s}\conv P\bk{\cdot,s}}\bk{t}= 
\frac{1+q^{1-5s}}{\zfun\bk{5s+1}} \FNorm{\frac{t_2}{t_1}}\FNorm{t_1}^{5s} \ .
\end{align*}

\item Assume $\FNorm{t_2}=\FNorm{t_1}$ and $\FNorm{\frac{t_1^2}{t_2}}\less 1$:
\begin{align*}
\bk{F\bk{\cdot,s}*\Id_{K\lambda_1 K}}\bk{t}=\frac{\zfun\bk{5s-1}}{\zfun\bk{5s}}&\left(\frac{q^{5-5s-5ns}}{\zfun\bk{10s-2}}+\frac{q^{1+5s-5ns}}{\zfun\bk{5s-1}}+\frac{q^{3-5ns}}{\zfun\bk{10s-2}}+\right.\\
&\left.+\frac{q^{4-5s-5ns}}{\zfun\bk{5s-1}}+\left(q^2-1\right)\frac{q^{-5ns}}{\zfun\bk{5s-1}}\right)
\end{align*}
and also
\[
F\bk{t,s}=\frac{\zfun\bk{5s-1}}{\zfun\bk{5s}}\frac{q^{-5ns}}{\zfun\bk{5s-1}} \ .
\]
Plugging this into \autoref{eq:Convolution formula} yields
\begin{align*}
\bk{F^\ast \bk{\cdot,s}\conv P\bk{\cdot,s}}\bk{t}= 
\frac{1+q^{1-5s}}{\zfun\bk{5s+1}}\FNorm{t_1}^{5s} \ .
\end{align*}

\end{enumerate}

\section{Computation of $D_{\Psi_s}$}
\label{App:Computing D-psi and convolution}
Recall that our aim is to compute
\[
E_k\bk{g}=\int_{U_k\bk{g}} \overline{\Psi_s\bk{u}} du \ .
\]
We treat first the case where $g\in S_{\Psi_s} UTK$ and then the case where $g\notin S_{\Psi_s} UTK$.

We note the following helpful fact that will be used repeatedly through out this section.
\begin{Lem}
\label{lemma:measuring lemma}
For $a,b,c\in\N$ with $a+b\ge c$ it holds
\[
\meas\set{\bk{x,y}\mvert \FNorm{x}\leq{q^a},\quad \FNorm{y}\leq{q^b},\quad \FNorm{xy}\leq{q^c}}= q^{c}\bk{1+\bk{a+b-c}\bk{1-q^{-1}}} \ .
\]
\end{Lem}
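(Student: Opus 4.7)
The plan is a direct unwinding of the measure by conditioning on the valuation of $x$. Parameterize the region by writing $\val(x) = -n$ for $n \leq a$ (so $\FNorm{x} = q^{-n}$ with $n \geq -a$); the three constraints translate to $n \geq -a$, $\val(y) \geq -b$, and $\val(y) \geq -c - n$. Thus for fixed $\val(x) = -n$, the admissible $y$ form the set $\set{y \mvert \FNorm{y} \leq q^{\min\bk{b,c+n}}}$, whose measure equals $q^{\min\bk{b,c+n}}$.

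Next I would split the sum over $n$ at the threshold $n = b - c$, which is the value of $n$ at which the two upper bounds on $\FNorm{y}$ cross. The hypothesis $a + b \geq c$ is exactly what guarantees that this threshold satisfies $b - c \geq -a$, so the range $\set{-a \leq n \leq b-c-1}$ is nonempty (or empty, in which case the formula still works) and, more importantly, the ``upper'' piece $\set{n \geq b-c}$ is an unconditional tail. Using $\meas\set{x \mvert \FNorm{x} = q^{-n}} = q^{-n}\bk{1 - q^{-1}}$ and the geometric sum $\sum_{n \geq N} q^{-n}\bk{1-q^{-1}} = q^{-N}$, the tail contributes
\[
\sum_{n \geq b-c} q^{b} \cdot q^{-n}\bk{1-q^{-1}} = q^{b} \cdot q^{-\bk{b-c}} = q^{c} \ .
\]

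For the complementary finite range $-a \leq n \leq b - c - 1$, the relevant fiber measure is $q^{c+n}$, giving
\[
\sum_{n=-a}^{b-c-1} q^{c+n} \cdot q^{-n}\bk{1-q^{-1}} = q^{c}\bk{1-q^{-1}}\bk{a + b - c} \ .
\]
Adding the two contributions yields $q^{c}\bk{1 + \bk{a+b-c}\bk{1-q^{-1}}}$, as claimed.

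There is no real obstacle here; the only bookkeeping point to watch is the boundary case $a + b = c$, in which the second sum is empty and the formula correctly reduces to $q^{c}$, and the degenerate case where $b - c < -a$ is excluded by hypothesis. Both edge cases are consistent with the stated formula, so the single computation above handles all admissible $(a,b,c)$.
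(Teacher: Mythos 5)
Your computation is correct and is exactly the routine fibering-over-the-valuation-of-$x$ argument that the paper leaves unproved (it states the lemma as a ``helpful fact'' with no proof), so there is nothing to reconcile: the tail sum giving $q^{c}$ and the $a+b-c$ equal terms giving $q^{c}\bk{a+b-c}\bk{1-q^{-1}}$ are both right, as is the treatment of the boundary case $a+b=c$. The only blemish is the parenthetical ``$\val(x)=-n$'', which contradicts your own normalization $\FNorm{x}=q^{-n}$ (it should read $\val(x)=n$, $n\ge -a$); the constraints and sums you actually use are internally consistent, so this is a typo rather than a gap.
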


\subsection{ Toral elements } 
For $t=h_\alpha(t_1)h_\beta(t_2)$ and $u=u(r_1,r_2,r_3,r_4,r_5)$, the matrix $\iota(ut)$ has form 
\begin{equation}
\begin{pmatrix}
1 & 0 & r_2 & r_3 & \frac{-r_4}{2} & \frac{r_2r_3+r_5}{2} & \frac{r_2r_4-r_3^2}{2} \\
0 & 1 & r_1 & r_2& \frac{-r_3}{2} & \frac{r_1r_3-r_2^2}{2} & \frac{r_1r_4-2r_2r_3-r_5}{2} \\
0 & 0 & 1 & 0 & 0 & \frac{r_3}{2} & \frac{r_4}{2} \\
0 & 0 & 0 & 1 & 0 & -r_2 & -r_3 \\
0 & 0 & 0 & 0 & 1 & -r_1 & -r_2 \\
0 & 0 & 0 & 0 & 0 & 1 & 0 \\
0 & 0 & 0 & 0 & 0 & 0 & 1
\end{pmatrix}
\cdot
\begin{pmatrix}
t_1\\
&\frac{t_2}{t_1}\\
&&\frac{t_1^2}{t_2}\\
&&&1\\
&&&&\frac{t_2}{t_1^2}\\
&&&&&\frac{t_1}{t_2}\\
&&&&&&\frac{1}{t_1}
\end{pmatrix} \ .
\end{equation}
Consider an element $g\in T$ and denote it by $t=h_\alpha(t_1)h_\beta(t_2)$. 
Denote $\FNorm{t_1}=q^{-n}$ and $\FNorm{t_2}=q^{-m}$.
 By \autoref{eq:Conditions for m} $E_k\bk{t}=0$ unless $\FNorm{t_1}, \FNorm{\frac{t_2}{t_1}}\le 1$.
 Since $E_k\in \M^0_{\Psi_s}$ and $w_\alpha \in S_{\Psi_s}\subset K$ one has 
$$E_k\bk{t}=E_k\bk{w_\alpha t w_\alpha^{-1}}.$$ 
In particular we can assume  $\FNorm{\alpha\bk{t}}=\FNorm{\frac{t_1^2}{t_2}}\leq{1}$ or 
$|t_1|\le |\frac{t_2}{t_1}|$.  Also, $U_k\bk{t}=\emptyset$ unless $|t_1|\ge q^{-k}$.
To sum up we have to compute $E_k(t)$ where
\[
q^{-k}\leq \FNorm{t_1}\leq\FNorm{\frac{t_2}{t_1}}\leq 1 \ .
\]
We may exchange integration over $U_k\bk{t}$ to integration over a smaller and simpler set, namely
\begin{Lem}
\label{lemma:reducing domain of integration}
\[
E_k\bk{t}=\int\limits_{\overline{U_k\bk{t}}} \overline{\Psi_s\bk{u}} \, du \ ,
\]
where
\[
\overline{U_k\bk{t}}=\set{u\bk{r_1,r_2,r_3,r_4,r_5}\in U_k\bk{t}\mvert \FNorm{r_2},\FNorm{r_3}\leq q} \ .
\]
\end{Lem}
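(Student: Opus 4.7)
The goal is to show that integrating $\overline{\Psi_s\bk{u}}=\psi\bk{-r_2-r_3}$ over $U_k\bk{t}\setminus\overline{U_k\bk{t}}$ yields zero. By the $S_{\Psi_s}$-invariance of $E_k$, in particular under the simple reflection $w_\alpha\in S_{\Psi_s}$ which exchanges $\alpha+\beta\leftrightarrow 2\alpha+\beta$ and hence $r_2\leftrightarrow r_3$ in the parametrization, it is enough to handle the region $\set{u\in U_k\bk{t}\mvert\FNorm{r_2}>q}$.

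First I would use the explicit matrix $\iota\bk{ut}$ to enumerate the defining constraints of $U_k\bk{t}$. Writing $\FNorm{t_1}=q^{-n}$, $\FNorm{t_2}=q^{-m}$ with $0\le n\le m\le 2n\le 2k$, these are the linear bounds $\FNorm{r_1}\le q^{k+n-m}$ and $\FNorm{r_2},\FNorm{r_3},\FNorm{r_4}\le q^{k-n}$ coming from columns $3$--$5$ and the last two rows, together with the four quadratic bounds $\FNorm{r_1 r_3-r_2^2}\le q^{k+n-m}$, $\FNorm{r_2 r_4-r_3^2}\le q^{k-n}$, $\FNorm{r_2 r_3+r_5}\le q^{k+n-m}$, and $\FNorm{r_1 r_4-2r_2 r_3-r_5}\le q^{k-n}$ coming from the entries in positions $\bk{2,6},\bk{1,7},\bk{1,6},\bk{2,7}$. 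Since the character depends only on $r_2,r_3$, I would integrate out $r_5,r_1,r_4$ first. The variable $r_5$ appears linearly in exactly two constraints with different radii, so its integration produces a factor $q^{k-n}$ together with the derived determinantal constraint $\FNorm{r_1 r_4-r_2 r_3}\le q^{k+n-m}$. The remaining $(r_1,r_4)$ integration against the constraints $\FNorm{r_1 r_3-r_2^2}\le q^{k+n-m}$, $\FNorm{r_2 r_4-r_3^2}\le q^{k-n}$, $\FNorm{r_1 r_4-r_2 r_3}\le q^{k+n-m}$ then produces a volume $V\bk{r_2,r_3}$ depending only on $\FNorm{r_2},\FNorm{r_3}$ and the determinantal combinations.

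The key assertion is then that for each $j\ge 2$ and each fixed $r_3$, the character integral $\int_{\FNorm{r_2}=q^j}V\bk{r_2,r_3}\,\psi\bk{-r_2}\,dr_2=0$. I would verify this by splitting according to the sizes of $\FNorm{r_2},\FNorm{r_3}$ relative to $q^{k-n}$ and $q^{k+n-m}$: in each regime one checks that either the admissible set is empty (which happens whenever the quadratic constraints force a product $\FNorm{r_i r_j}$ to exceed the available linear bounds), or $V\bk{r_2,r_3}$ is invariant under translations $r_2\mapsto r_2+\epsilon$ with $\FNorm{\epsilon}\le q^{j-1}$, so that $\int_{\varpi^{1-j}\mO}\psi\bk{-\epsilon}\,d\epsilon=0$ forces the slice integral to vanish for $j\ge 2$. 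The main obstacle will be verifying this translation invariance in the tight regime where the quadratic constraint $\FNorm{r_1 r_3-r_2^2}\le q^{k+n-m}$ is binding and hence a priori sensitive to shifts of $r_2$; one must show that in that case the admissible $(r_1,r_4)$ collapses to a thin coset whose location is locally constant in $r_2$ modulo $\varpi^{1-j}\mO$, and the required bound $2j-1\le k+n-m$ either holds or the admissible set is empty. This case analysis is precisely the content of the explicit piecewise formulas that will be derived in \autoref{App:Computing D-psi and convolution}.
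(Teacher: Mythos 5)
Your overall strategy is the right one: the integrand's dependence on $(r_2,r_3)$ is concentrated in the character $\psi\bk{-r_2-r_3}$, so if the remaining volume factor is sufficiently locally constant on the shells $\FNorm{r_2}=q^j$ (or $\FNorm{r_3}=q^j$) with $j\ge 2$, the shell integral vanishes and one is left with $\overline{U_k\bk{t}}$. But you take a much harder route to this than the paper, and it is not actually closed.

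The paper's argument is a two-line conjugation trick requiring no explicit analysis of the constraints whatsoever: conjugation by $h\bk{s_1,s_2}=h_\beta\bk{s_1}h_{2\alpha+\beta}\bk{s_2}$ with $s_1,s_2\in\mO^\times$ lies in $K$, commutes with $t$, and sends $u\bk{r_1,r_2,r_3,r_4,r_5}$ to an element whose second and third coordinates are $s_1 r_2, s_2 r_3$. Since $\Gamma$ is bi-$K$-invariant, $U_k\bk{t}$ is stable under this conjugation, and since $\delta_P\bk{h\bk{s_1,s_2}}=1$ the measure of the fiber $U_k^{\bk{x,y}}\bk{t}$ is unchanged under $\bk{x,y}\mapsto\bk{s_1 x, s_2 y}$. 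Thus the fiber volume depends only on $\FNorm{x},\FNorm{y}$, and then the shell integrals $\int_{\FNorm{z}=q^j}\overline{\psi\bk{z}}\,dz$ vanish for $j\ge 2$. You instead write out all the constraints of $U_k\bk{t}$, integrate out $r_5,r_1,r_4$, and then try to prove additive translation invariance of the resulting $V\bk{r_2,r_3}$ under $r_2\mapsto r_2+\epsilon$ with $\FNorm{\epsilon}\le q^{j-1}$. That invariance would indeed suffice, and in fact it \emph{follows} from the observation above (a shift by $\epsilon$ with $\FNorm{\epsilon}<\FNorm{r_2}$ preserves $\FNorm{r_2}$), but you try to establish it directly by tracking how each quadratic constraint moves under the shift, and you yourself flag this as ``the main obstacle'' and defer it to a case analysis you do not carry out. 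In particular the claimed dichotomy ``$2j-1\le k+n-m$ holds or the admissible set is empty'' is not verified, so as written the proof has a genuine gap exactly at the step the lemma hinges on. The fix is to replace the translation argument with the unit-scaling (conjugation) argument, which disposes of all cases at once and needs none of the explicit $\iota\bk{ut}$ bookkeeping.
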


\begin{proof}
For any $x,y \in F$ define 
\[
U^{\bk{x,y}}_k\bk{t}=\set{u\bk{r_1,r_2,r_3,r_4,r_5}\in U_k\bk{t}\mvert r_2=x,r_3=y}
\]
and note that for $s_1,s_2 \in \mO^\times$
\[
U^{\bk{s_1 x,s_2 y}}_k\bk{t}= h\bk{s_1,s_2}U^{\bk{x,y}}_k\bk{t} h^{-1}\bk{s_1,s_2}\ ,
\]
where $h\bk{s_1,s_2}=h_\beta\bk{s_1}h_{2\alpha+\beta}\bk{s_2}$. Since $\delta_P\bk{h\bk{s_1,s_2}}=1$ it follows that $\meas\bk{U^{\bk{s_1x,s_2y}}_k\bk{t}}=\meas\bk{U^{\bk{x,y}}_k\bk{t}}$ which means it depends only on $t$, $\FNorm{x}$ and $\FNorm{y}$. In particular if $\FNorm{x}=q^i, \FNorm{y}=q^j$ we denote $\meas\bk{U^{\bk{x,y}}_k\bk{t}}$ by $\meas \bk{U^{i,j}_k\bk{t}}$.

Thus
\begin{align*}
E_k\bk{t}&=\int\limits_{\overline{U_k\bk{t}}} \overline{\Psi_s\bk{u}} \, du 
= \int\limits_{F\times F} \meas\bk{U^{\bk{x,y}}_k\bk{t}} \overline{\psi\bk{x+y}}\, dx\, dy=\\
&=\sum^\infty_{i,j=-\infty}  \meas\bk{U^{i,j}_k\bk{t}}
\int\limits_{\FNorm{x}=q^{i}}\overline{\psi\bk{x}}\, dx \int\limits_{\FNorm{y}=q^{j}} \overline{\psi\bk{y}}\, dy \ .
\end{align*}
Since $\int\limits_{\FNorm{z}=q^k}\overline{\psi(z)}dz=0$ for $k\more 1$, the proposition follows.
\end{proof}

\begin{Remark}
We can describe $\overline{U_k\bk{t}}$ by a short list of inequalities. Namely $u\in\overline{U_k\bk{t}}$ if and only if
\begin{align*}
&k\geq{n} \\
&\FNorm{r_2}, \FNorm{r_3}\leq{q}\\
&\FNorm{r_1},\FNorm{r_2},\FNorm{r_3}, \FNorm{r_2r_3+r_5},\FNorm{r_1r_3-r_2^2} \leq{q^{k+n-m}}\\
&\FNorm{r_2},\FNorm{r_3},\FNorm{r_4}, \FNorm{r_2r_4-r_3^2},\FNorm{r_1r_4-2r_2r_3-r_5}\leq q^{k-n} \ .
\end{align*}
\end{Remark}

\begin{Cor}
\label{cor:E_k as sum of four volumes}
\[
E_k\bk{t}=\meas\bk{U_k^{0,0}\bk{t}}-\meas\bk{U_k^{0,1}\bk{t}} -\meas\bk{U_k^{1,0}\bk{t}}+\meas\bk{U_k^{1,1}\bk{t}} \ .
\]
\end{Cor}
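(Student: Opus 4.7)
The plan is to combine the integral representation from the preceding lemma with an explicit evaluation of the relevant character integrals and then identify which contributions survive. Starting from the expansion
\[
E_k\bk{t}=\sum_{i,j}\meas\bk{U_k^{i,j}\bk{t}}\int\limits_{\FNorm{x}=q^{i}}\overline{\psi\bk{x}}\,dx\int\limits_{\FNorm{y}=q^{j}}\overline{\psi\bk{y}}\,dy
\]
derived in the proof of \autoref{lemma:reducing domain of integration}, where the sum is effectively over $i,j\leq 1$ since $\overline{U_{k}\bk{t}}$ enforces $\FNorm{r_2},\FNorm{r_3}\leq q$, I would first compute the one-variable character integrals. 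Because $\psi$ has conductor $\mO$, one has $\int_{\FNorm{x}\leq q}\overline{\psi\bk{x}}\,dx=0$, whence $\int_{\FNorm{x}=q}\overline{\psi\bk{x}}\,dx=-1$; and for $i\leq 0$, $\psi$ is trivial on the domain, giving $\int_{\FNorm{x}=q^{i}}\overline{\psi\bk{x}}\,dx=q^{i}\bk{1-q^{-1}}$, with $\sum_{i\leq 0}q^{i}\bk{1-q^{-1}}=1$.

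The crucial step is to prove the constancy claim
\[
\meas\bk{U_{k}^{i,j}\bk{t}}=\meas\bk{U_{k}^{\epsilon_{1},\epsilon_{2}}\bk{t}}, \qquad \text{where } \epsilon_{\ell}=\begin{cases}0,&\text{if the }\ell\text{-th index is }\leq 0,\\ 1,&\text{if it equals }1.\end{cases}
\]
Granting this, the double sum telescopes: the contributions from $\bk{i,j}$ with $i,j\leq 0$ sum to $\meas\bk{U_k^{0,0}}\cdot 1\cdot 1$, those with $i=1,\,j\leq 0$ sum to $\meas\bk{U_k^{1,0}}\cdot\bk{-1}\cdot 1$, those with $i\leq 0,\,j=1$ sum to $\meas\bk{U_k^{0,1}}\cdot 1\cdot\bk{-1}$, and the lone term $i=j=1$ contributes $\meas\bk{U_k^{1,1}}\cdot\bk{-1}\cdot\bk{-1}$, yielding exactly the claimed alternating sum.

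To establish the constancy claim I would substitute $\FNorm{r_{2}}\in\set{\leq 1,=q}$ and $\FNorm{r_{3}}\in\set{\leq 1,=q}$ into the defining inequalities of $\overline{U_{k}\bk{t}}$ listed in the Remark, and use the ultrametric inequality together with the numeric relations $0\leq n\leq m\leq 2n$ and $k\geq n$ (which follow from the standing assumption $\FNorm{t_{1}}\leq\FNorm{t_{2}/t_{1}}\leq 1$) to eliminate the dependence on $r_{2},r_{3}$. The key manipulation is the measure-preserving translation $r_{5}\mapsto r_{5}-2r_{2}r_{3}$, after which the constraint $\FNorm{r_{1}r_{4}-2r_{2}r_{3}-r_{5}}\leq q^{k-n}$ becomes $\FNorm{r_{1}r_{4}-r_{5}}\leq q^{k-n}$, while the range of the shifted variable differs from $\FNorm{r_{5}}\leq q^{k+n-m}$ only by a quantity of norm at most $1\leq q^{k+n-m}$, hence is unchanged.

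The principal obstacle lies in the regions where $\FNorm{r_{2}}=q$ or $\FNorm{r_{3}}=q$: here $\FNorm{r_{2}^{2}}=q^{2}$ may exceed $q^{k+n-m}$, so the quadratic term in $\FNorm{r_{1}r_{3}-r_{2}^{2}}\leq q^{k+n-m}$ is not absorbed automatically, and analogously for $\FNorm{r_{2}r_{4}-r_{3}^{2}}\leq q^{k-n}$. In these borderline cases the constraint becomes a congruence condition forcing $r_{1}r_{3}$ (respectively $r_{2}r_{4}$) to lie in a fixed coset modulo a power of $\unif$. One has to verify, via an additional change of variables in $r_{1}$ or $r_{4}$ (using that $\FNorm{r_{3}}$ or $\FNorm{r_{2}}$ is a prescribed power of $q$ in the relevant region, so multiplication is a measure-scaling bijection on the appropriate lattice), that the resulting fiber volume still depends on $\bk{r_{2},r_{3}}$ only through $\bk{\epsilon_{1},\epsilon_{2}}$. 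Once these ultrametric case distinctions are dispatched, the corollary follows immediately from the telescoping computation above.
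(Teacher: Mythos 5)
Your argument is correct and is essentially the paper's own proof: expand $E_k\bk{t}$ by fibering over $\bk{r_2,r_3}$, use that $\meas\bk{U_k^{i,j}\bk{t}}$ depends only on whether each index is $\leq 0$ or $=1$, and evaluate the character integrals ($1$ over $\FNorm{x}\leq 1$, $-1$ over $\FNorm{x}=q$) to collapse the sum into the four-term alternating expression. The constancy claim you devote most of your effort to is precisely the statement the paper simply notes without proof, so your sketch (the translation in $r_5$ plus the ultrametric case analysis when $\FNorm{r_2}$ or $\FNorm{r_3}$ equals $q$) only supplies extra detail rather than a different route.
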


\begin{proof}
We first note that for every $i,i'\leq{0}$, $j\leq 1$ and $k$ it holds
\[
\meas\bk{U^{i,j}_k\bk{t}}=\meas\bk{U^{i',j}_k\bk{t}}, \quad
\meas\bk{U^{j,i}_k\bk{t}}=\meas\bk{U^{j,i'}_k\bk{t}} \ .
\]
We also recall that
\[
\int\limits_{\FNorm{r}\leq 1} \overline{\psi\bk{r}}=1, \quad \int\limits_{\FNorm{r}=q} \overline{\psi\bk{r}}=-1 \ .
\]
The claim then follows by a simple computation
\begin{align*}
E_k\bk{t}&=\sum^\infty_{i,j=-\infty}  \meas\bk{U^{i,j}_k\bk{t}}
\int\limits_{\FNorm{x}=q^{i}} \overline{\psi\bk{x}}\, dx \int\limits_{\FNorm{y}=q^{j}} \overline{\psi\bk{y}}\, dy= \\
&=\meas\bk{U_k^{0,0}\bk{t}} \int\limits_{\FNorm{x}\leq{1}} \overline{\psi\bk{x}}\, dx \int\limits_{\FNorm{y}\leq{1}} \overline{\psi\bk{y}}\, dy -
\meas\bk{U_k^{0,1}\bk{t}} \int\limits_{\FNorm{x}\leq{1}} \overline{\psi\bk{x}}\, dx \int\limits_{\FNorm{y}=q} \overline{\psi\bk{y}}\, dy - \\
&-\meas\bk{U_k^{1,0}\bk{t}} \int\limits_{\FNorm{x}=q} \overline{\psi\bk{x}}\, dx \int\limits_{\FNorm{y}\leq{1}} \overline{\psi\bk{y}}\, dy
+\meas\bk{U_k^{1,1}\bk{t}} \int\limits_{\FNorm{x}=q} \overline{\psi\bk{x}}\, dx \int\limits_{\FNorm{y}=q} \overline{\psi\bk{y}}\, dy=\\
&=\meas\bk{U_k^{0,0}\bk{t}}-\meas\bk{U_k^{0,1}\bk{t}} -\meas\bk{U_k^{1,0}\bk{t}}+\meas\bk{U_k^{1,1}\bk{t}} \ .
\end{align*}
\end{proof}

\begin{Prop}
For $t$ as above, with $\FNorm{t_1}=q^{-n}$, it holds
\begin{enumerate}
\item $E_k\bk{t}=0$ for $k\neq n, n+1$.
\item $E_n\bk{t}=\piece{1 & \FNorm{\alpha\bk{t}}=1\\ \FNorm{\alpha\bk{t}}^{-1} & \FNorm{\alpha\bk{t}}\less 1}, \quad
E_{n+1}\bk{t}=\piece{2q^2 & \FNorm{\alpha\bk{t}}=1\\ 2q^2 \FNorm{\alpha\bk{t}}^{-1} & \FNorm{\alpha\bk{t}}\less 1}$.
\end{enumerate}
\end{Prop}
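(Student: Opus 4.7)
The plan is to evaluate $E_k(t)$ from the four-term alternating sum supplied by \autoref{cor:E_k as sum of four volumes}. The first step is to integrate out $r_5$: the two affine conditions $\FNorm{r_2r_3+r_5}\le q^{k+n-m}$ and $\FNorm{r_1r_4-2r_2r_3-r_5}\le q^{k-n}$ place $r_5$ in the intersection of two cosets of balls in $F$. Writing $a=k+n-m$ and $b=k-n$, so that $a\ge b\ge 0$ on the relevant range $n\le m\le 2n$, this intersection has measure $q^b$ and is non-empty precisely when the coupling condition $\FNorm{r_1r_4-r_2r_3}\le q^a$ holds. After this reduction the remaining constraints on $(r_1,r_4)$ are $\FNorm{r_1}\le q^a$, $\FNorm{r_4}\le q^b$, the coupling, and the two polynomial inequalities $\FNorm{r_1r_3-r_2^2}\le q^a$ and $\FNorm{r_2r_4-r_3^2}\le q^b$.

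For each of the four slices indexed by $(i,j)\in\{0,1\}^2$ (with $i=0$ standing for $\FNorm{r_2}\le 1$ and $i=1$ for $\FNorm{r_2}=q$, similarly for $j$ and $r_3$), I would examine how these polynomial inequalities reduce. When $\FNorm{r_2^2}\le q^a$ and $\FNorm{r_3^2}\le q^b$ they are automatic; otherwise they force a stricter bound on $\FNorm{r_1}$ or $\FNorm{r_4}$, and in the borderline case they impose a residue-field congruence. For example, when $\FNorm{r_2}=\FNorm{r_3}=q$ and $k=n+1$, the inequality $\FNorm{r_1r_3-r_2^2}\le q$ forces $\FNorm{r_1}=q$ together with $\bar a_1\bar a_3\equiv\bar a_2^2\pmod{\varpi}$ upon writing $r_i=\varpi^{-1}a_i$. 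Once the admissible regime for $(\FNorm{r_1},\FNorm{r_4})$ is identified, the slice volume is the product of $q^b$ with a volume supplied by \autoref{lemma:measuring lemma}.

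The cases in $k$ then fall out. For $k<n$ the condition $k\ge n$ from the Remark fails and $E_k(t)=0$. For $k=n$ we have $b=0$, all polynomial constraints collapse, only $U_n^{0,0}(t)$ contributes, and its volume is $q^a=\FNorm{\alpha(t)}^{-1}$ (equal to $1$ when $\FNorm{\alpha(t)}=1$). For $k=n+1$ the off-diagonal slices vanish because their polynomial conditions are incompatible with the absolute-value bounds, $U_{n+1}^{0,0}(t)$ contributes $q(2q-1)$ via the measuring lemma, and $U_{n+1}^{1,1}(t)$ contributes a residue-field count equal to $q$; combining yields $E_{n+1}(t)=2q^2$, with an extra factor $\FNorm{\alpha(t)}^{-1}$ appearing in the non-spherical case. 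For $k\ge n+2$ each slice volume takes the form $q^{2b}\bigl(1+(c-i-j)(1-q^{-1})\bigr)\FNorm{\alpha(t)}^{-1}$ with $c=k-n$, and the arithmetic identity $\sum_{i,j\in\{0,1\}}(-1)^{i+j}(c-i-j)=0$ forces $E_k(t)=0$.

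The main obstacle is bookkeeping the polynomial constraints in the slice $U^{1,1}$, whose effect transitions from a residue-field congruence at $k=n+1$ to a loose absolute-value bound at $k\ge n+2$, and the two regimes must be handled separately. Splitting the argument at the outset into $\FNorm{\alpha(t)}=1$ (where $a=b$) and $\FNorm{\alpha(t)}<1$ (where $a>b$) helps: in the second case the bounds on $\FNorm{r_1}$ and $\FNorm{r_4}$ are asymmetric and need to be tracked independently, but the structure of the argument and the final cancellation in the alternating sum are parallel.
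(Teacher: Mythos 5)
Your overall plan coincides with the paper's: reduce $E_k(t)$ to the four-term alternating sum of slice volumes from \autoref{cor:E_k as sum of four volumes}, compute each volume with \autoref{lemma:measuring lemma}, and observe the cancellation $\sum_{i,j}(-1)^{i+j}(c-i-j)=0$ for $k\geq n+2$. The computations at $k=n$ and at $k\geq n+2$ are correct and match what the paper does.

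The gap is at $k=n+1$ in the non-spherical case $\FNorm{\alpha(t)}<1$. You assert that \emph{both} off-diagonal slices vanish, but the asymmetry you yourself flag (the bound on $\FNorm{r_1}$ is $q^{k+n-m}$, strictly larger than the bound $q^{k-n}$ on $\FNorm{r_4}$) kills only one of them. When $\FNorm{r_3}=q$ the constraint $\FNorm{r_2r_4-r_3^2}\leq q^{k-n}=q$ is hopeless against $\FNorm{r_2r_4}\leq q$, so $U_{n+1}^{0,1}(t)=\emptyset$; but when $\FNorm{r_2}=q$ the corresponding constraint $\FNorm{r_1r_3-r_2^2}\leq q^{2n-m+1}\geq q^2$ is automatic, so $U_{n+1}^{1,0}(t)$ is nonempty, with measure $q^{2n-m+2}$. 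Including its $-q^{2n-m+2}$ contribution, the alternating sum gives $E_{n+1}(t)=q^{2n-m+2}(2-q^{-1})-0-q^{2n-m+2}+q^{2n-m+1}=q^{2n-m+2}=q^2\FNorm{\alpha(t)}^{-1}$, not $2q^2\FNorm{\alpha(t)}^{-1}$. In other words, the "extra factor $\FNorm{\alpha(t)}^{-1}$" intuition fails precisely at $k=n+1$: the structure is not parallel to the spherical case, and the factor of $2$ is lost.

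Note, in your defense, that the proposition as printed contains the same error. The value $2q^2\FNorm{\alpha(t)}^{-1}$ is inconsistent with the paper's own slice computation and with the displayed formula for $D^{\Psi_s}(t,s)$ (which carries $1+q^{1-5s}$, not $1+2q^{1-5s}$, in the non-spherical case); the intended value is $E_{n+1}(t)=q^2\FNorm{\alpha(t)}^{-1}$. Your proof reproduces the typo rather than catching it, because of the missing $(1,0)$ slice.
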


\begin{proof}
We separate the proof according to the absolute value of $\alpha\bk{t}$.
\begin{itemize}
\item Assume that $\FNorm{\alpha\bk{t}}=1$, i.e. $\FNorm{\frac{t_1^2}{t_2}}=1$.
\begin{enumerate}
\item Assume $k=n$, then $u\in\overline{U_k\bk{t}}$ if and only if
\[
\FNorm{r_1},\FNorm{r_2},\FNorm{r_3},\FNorm{r_4},\FNorm{r_5}\leq{1} \ .
\]
In this case $U_k^{0,1}\bk{t},U_k^{1,0}\bk{t},U_k^{1,1}\bk{t}=\emptyset$ and $U_k^{0,0}\bk{t}=\mO^3$.
Hence
\[
E_n\bk{g}=1
\]

\item Assume $k=n+1$, then $u\in\overline{U_k\bk{t}}$ if and only if
\begin{align*}
&\FNorm{r_1},\FNorm{r_2},\FNorm{r_3},\FNorm{r_4} \leq{q}\\
&\FNorm{r_2r_3+r_5},\FNorm{r_1r_3-r_2^2}, \FNorm{r_2r_4-r_3^2},\FNorm{r_1r_4-2r_2r_3-r_5}\leq q \ .
\end{align*}
We demonstrate the measurement of $U_k^{i,j}\bk{t}$ in this case as an example to the calculation held in all other cases. 
Assume that $\FNorm{r_2},\FNorm{r_3}\leq{1}$, then $u\in\overline{U_k\bk{t}}$ if and only if
\[
\FNorm{r_1},\FNorm{r_4},\FNorm{r_5}, \FNorm{r_1r_4} \leq{q}\\
\]
and hence, by \autoref{lemma:measuring lemma},
\[
\meas\bk{U_k^{0,0}\bk{t}}=q\bk{q+\bk{q-1}}=2q^2-q \ .
\]

Assume $\FNorm{r_2}=q$ and $\FNorm{r_3}\leq{1}$. Then $\FNorm{r_1r_3}\leq{q}$ but also $\FNorm{r_1r_3-r_2^2}\leq{q}$ which contradicts the fact that $\FNorm{r_2^2}=q^2$. Hence $U_k^{0,1}\bk{t}=\emptyset$, by a similar argument $U_k^{1,0}\bk{t}=\emptyset$.

Assume that $\FNorm{r_2},\FNorm{r_3}=q$. Let us parametrize $U_k^{1,1}\bk{t}$ in the following way
\begin{align*}
r_1&=\frac{x+r_2^2}{r_3} \\
r_4&=\frac{y+r_3^2}{r_2} \\
r_5&=z-r_2r_3 \ .
\end{align*}
The domain of integration for the new variables is $\FNorm{x},\FNorm{y},\FNorm{z}\leq{q}$. Also
\[
dr_1=\frac{dx}{q},\quad dr_4=\frac{dy}{q},\quad dr_5=dz \ .
\]
Note that now
\[
\FNorm{r_1r_4-2r_2r_3-r_5}=\FNorm{\frac{x+r_2^2}{r_3}\cdot\frac{y+r_3^2}{r_2}-r_2r_3-z} = 
\FNorm{\frac{xy+xr_3^2+yr_2^2}{r_2r_3}-z}\leq{q} \ .
\]
Hence
\[
\meas\bk{U_k^{1,1}\bk{t}}=\int\limits_{\unif^{-1}\mO} \frac{dx}{q} \int\limits_{\unif^{-1}\mO} \frac{dy}{q} \int\limits_{\unif^{-1}\mO} dz=q \ .
\]

Combining the computed $\meas\bk{U^{i,j}_k\bk{t}}$ yields
\begin{align*}
E_{n+1}\bk{t}=& \meas\bk{U_k^{0,0}\bk{t}}-\meas\bk{U_k^{0,1}\bk{t}}- \meas\bk{U_k^{1,0}\bk{t}}-\meas\bk{U_k^{1,1}\bk{t}}=\\
=& \bk{2q^2-q}-0-0+q=2q^2 \ .
\end{align*}

\item Assume $k\more{n+1}$, then $u\in\overline{U_k\bk{t}}$ if and only if
\begin{align*}
&\FNorm{r_2}, \FNorm{r_3}\leq{q}\\
&\FNorm{r_1},\FNorm{r_4},\FNorm{r_5}, \FNorm{r_2r_4},\FNorm{r_1r_3}, \FNorm{r_1r_4}\leq q^{k-n} \ .
\end{align*}
Hence, according to \autoref{lemma:measuring lemma},
\begin{align*}
\meas\bk{U_k^{0,0}\bk{t}}&=q^{2\bk{k-n}}\bk{1+\bk{k-n}\bk{1-q^{-1}}} \\
\meas\bk{U_k^{1,0}\bk{t}}&=\meas\bk{U_k^{0,1}\bk{t}}=q^{2\bk{k-n}}\bk{1+\bk{k-n-1}\bk{1-q^{-1}}} \\
\meas\bk{U_k^{1,1}\bk{t}}&=q^{2\bk{k-n}}\bk{1+\bk{k-n-2}\bk{1-q^{-1}}} \ ,
\end{align*}
and then
\begin{align*}
E_k\bk{g}=0 \ .
\end{align*}


Evaluating $D^{\Psi_s}$ at $t$ yields
\[
D^{\Psi_s}\bk{t}= q^{-n}E_n\bk{t} +q^{-n-1}\bk{ E_{n+1}\bk{t}-E_{n}\bk{t}}+ q^{-n-2}E_{n+1}\bk{t}= \frac{1+2q^{1-5s}}{\zfun\bk{5s+1}}\FNorm{t_1}^{5s+1} \ .
\]
\end{enumerate}

\item Assume that $\FNorm{\alpha\bk{t}}\less 1$, i.e. $\FNorm{\frac{t_1^2}{t_2}}\less{1}$.
\begin{enumerate}
\item Assume $k=n$, then $u\in\overline{U_k\bk{t}}$ if and only if
\begin{align*}
&\FNorm{r_2},\FNorm{r_3},\FNorm{r_4},\FNorm{r_1r_4-r_5}\leq 1\\
&\FNorm{r_1}, \FNorm{r_5} \leq{q^{2n-m}} \ .
\end{align*}
By making a change of variables $r_5=x+r_1r_4$ this is equivalent to
\begin{align*}
&\FNorm{r_2},\FNorm{r_3},\FNorm{r_4},\FNorm{x}\leq 1\\
&\FNorm{r_1}, \FNorm{r_1r_4} \leq{q^{2n-m}} \ .
\end{align*}
Hence, according to \autoref{lemma:measuring lemma},
\begin{align*}
\meas\bk{U_k^{0,0}\bk{t}}&=q^{2n-m} \\
\meas\bk{U_k^{1,0}\bk{t}}&= \meas\bk{U_k^{0,1}\bk{t}}= \meas\bk{U_k^{1,1}\bk{t}}=0 \ ,
\end{align*}
and then
\[
E_n\bk{t}=q^{2n-m} \ .
\]

\item Assume $k=n+1$, then $u\in\overline{U_k\bk{t}}$ if and only if
\begin{align*}
&\FNorm{r_2}, \FNorm{r_3}, \FNorm{r_4}, \FNorm{r_2r_4-r_3^2},\FNorm{r_1r_4-2r_2r_3-r_5}\leq{q}\\
&\FNorm{r_1},\FNorm{r_5},\FNorm{r_1r_3} \leq{q^{2n-m+1}} \ .
\end{align*}
Hence, according to \autoref{lemma:measuring lemma} and arguments similar to case 2 with $\FNorm{\alpha\bk{t}}=1$,
\begin{align*}
&\meas\bk{U_k^{0,0}\bk{t}}=q^{2n-m+2}\bk{1+\bk{1-q^{-1}}}, &\meas\bk{U_k^{1,0}\bk{t}}=q^{2n-m+2} \\ &\meas\bk{U_k^{0,1}\bk{t}}= 0, &\meas\bk{U_k^{1,1}\bk{t}}=q^{2n-m+1} \ ,
\end{align*}
and then
\[
E_n\bk{t}=q^{2n-m+2} \ .
\]

\item Assume $k\more{n+1}$, then $u\in\overline{U_k\bk{g}}$ if and only if
\begin{align*}
&\FNorm{r_2}, \FNorm{r_3}\leq{q}\\
&\FNorm{r_1}, \FNorm{r_5},\FNorm{r_1r_3} \leq{q^{k+n-m}}\\
&\FNorm{r_4}, \FNorm{r_2r_4},\FNorm{r_1r_4-r_5}\leq q^{k-n}.
\end{align*}
By making a change of variables $r_5=x+r_1r_4$ this is equivalent to
\begin{align*}
&\FNorm{r_2}, \FNorm{r_3}\leq{q}\\
&\FNorm{r_1}, \FNorm{r_1r_4},\FNorm{r_1r_3} \leq{q^{k+n-m}}\\
&\FNorm{r_4}, \FNorm{r_2r_4},\FNorm{x}\leq q^{k-n}.
\end{align*}
Hence, according to \autoref{lemma:measuring lemma},
\begin{align*}
\meas\bk{U_k^{0,0}\bk{t}}&=q^{k-n}q^{k+n-m}\bk{1+\bk{k-n}\bk{1-q^{-1}}} \\
\meas\bk{U_k^{1,0}\bk{t}}&=q^{k-n}q^{k+n-m}\bk{1+\bk{k-n-1}\bk{1-q^{-1}}} \\
\meas\bk{U_k^{0,1}\bk{t}}&=q^{k-n}q^{k+n-m}\bk{1+\bk{k-n-1}\bk{1-q^{-1}}} \\
\meas\bk{U_k^{1,1}\bk{t}}&=q^{k-n}q^{k+n-m}\bk{1+\bk{k-n-2}\bk{1-q^{-1}}} \ ,
\end{align*}
and then
\[
E_n\bk{t}=0 \ .
\]

Evaluating $D^{\Psi_s}$ at $t$ yields
\[
D^{\Psi_s}\bk{t}= \frac{1+q^{1-5s}}{\zfun\bk{5s+1}} \FNorm{\frac{t_2}{t_1}}\FNorm{t_1}^{5s} \ .
\]
\end{enumerate}
\end{itemize}
\end{proof}

\subsection{Non-toral case}
This case is  technically more involved than the case of the toral elements, but all the ideas for the toral elements can be carried to this case as well. We will prove the following result.
\begin{Prop}
\label{Prop:non toral Fourier coefficient}
$E_k(g)=0$ for $g\notin S_{\Psi_s} UTK$.
\end{Prop}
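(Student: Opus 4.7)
My plan is to use the transformation properties of $E_k$ and the Iwasawa decomposition to reduce the statement to elements of the form $g=x_\alpha(d)t$ with $d\in F$, $t\in T$, and then exhibit oscillation of $\Psi_s$ along a subgroup of $U$ preserving the integration domain $U_k(g)$.

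First, I would verify that $E_k\in\mathcal{M}^0_{\Psi_s}$. Right $K$-invariance is immediate from the bi-$K$-invariance of $\Gamma$. The identity $E_k(u_0 g)=\Psi_s(u_0)E_k(g)$ follows from a substitution $v=uu_0$ in the defining integral, using that $U$ is unimodular and $\Psi_s$ factors through the abelianization $U/R$. Finally, since $S_{\Psi_s}\subset K$ normalizes $U$ and fixes $\Psi_s$ by construction, a further substitution $v=sus^{-1}$ yields $E_k(s^{-1}g)=E_k(g)$.

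Using the Iwasawa decomposition $G=U\cdot N_\alpha\cdot T\cdot K$ (with $N_\alpha$ the root subgroup attached to $\alpha$), write $g=u_0\cdot x_\alpha(d)\cdot t\cdot k$. By the covariance above, $E_k(g)=\Psi_s(u_0)E_k(x_\alpha(d)t)$, reducing the claim to $g=x_\alpha(d)t$ with $x_\alpha(d)t\notin S_{\Psi_s}UTK$. An explicit analysis, based on the $SL_2$-Bruhat factorization of $x_{-\alpha}$ combined with conjugation by the two generators of $S_{\Psi_s}\simeq S_3$, translates this non-containment into concrete inequalities on $\FNorm{d}$, $\FNorm{\alpha(t)}$ and $\FNorm{\beta(t)}$; roughly, none of the six $S_3$-transforms of $(d,t)$ should have their $x_\alpha$-Iwasawa parameter in $\mO$.

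For the final step, I would identify a subgroup $U_0\subseteq U\cap gKg^{-1}$ with $\Psi_s|_{U_0}\not\equiv 1$. For any $u_0\in U\cap gKg^{-1}$, the equality $\Gamma(ugu_0)=\Gamma(ug\cdot (g^{-1}u_0 g))=\Gamma(ug)$ shows that $U_k(g)\cdot u_0=U_k(g)$; then Fubini immediately gives $E_k(g)=0$. The construction of $U_0$ is the main obstacle: I would compute the conjugates $gx_{\alpha+\beta}(r)g^{-1}$ and $gx_{2\alpha+\beta}(r)g^{-1}$ using the matrix realization $\iota:G\hookrightarrow SO_7$ of \autoref{App:Computing D-psi and convolution}, and argue that under the assumption $g\notin S_{\Psi_s}UTK$ at least one of these conjugates lies in $K$ for some $r\notin\mO$. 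Since $\Psi_s$ is non-trivial on both root subgroups $x_{\alpha+\beta}$ and $x_{2\alpha+\beta}$, this supplies the required subgroup and concludes the vanishing.
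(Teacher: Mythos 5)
Your preparatory steps are fine and agree with the paper: $E_k\in\M^0_{\Psi_s}$, the Iwasawa reduction to $g=t\,x_\alpha\bk{d}$, and the use of $w_\alpha\in S_{\Psi_s}\subset K$ are all used there too. The gap is in your final step, and it is not a technical one: the mechanism you propose cannot exist at the points where this proposition has real content. Note that if some $u_0\in U\cap gKg^{-1}$ with $\Psi_s\bk{u_0}\neq 1$ existed, then \emph{every} $H\in\M_{\Psi_s}$ would vanish at $g$, since $H\bk{u_0g}=\Psi_s\bk{u_0}H\bk{g}$ while also $H\bk{u_0g}=H\bk{g\cdot g^{-1}u_0g}=H\bk{g}$. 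But the set of $g$ killed by such equivariance arguments is exactly the one described by \autoref{eq:Condition for m without Spsi invariance} and \autoref{eq:Conditions for m}, and it is strictly smaller than the complement of $S_{\Psi_s}UTK$: by \autoref{Lemma:Computing F(,s)} the function $F\bk{\cdot,s}\in\M^0_{\Psi_s}$ is non-zero at non-toral elements $g=h_\alpha\bk{t_1}h_\beta\bk{t_2}x_\alpha\bk{d}$ satisfying \autoref{eq:Conditions for m} with $\FNorm{d}\more 1$ and $\FNorm{d^2\frac{t_1^2}{t_2}+d}\geq 1$ (for instance $\FNorm{t_1}=q^{-2}$, $\FNorm{t_2}=q^{-4}$, $\FNorm{d}=q$), and these are precisely the points outside $S_{\Psi_s}UTK$ that the proposition must handle beyond the easy lemma. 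At such $g$ no $u_0$ (hence no subgroup $U_0$) of the kind you want can exist, so the claim that "at least one of these conjugates lies in $K$ for some $r\notin\mO$" must fail there; your argument only recovers the vanishing already encoded in \autoref{eq:Conditions for m}. This is also why the paper only asserts vanishing off $S_{\Psi_s}UTK$ for $D^{\Psi_s}$ and for the convolution $F^\ast\conv P$, not for $F$ itself.

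The paper's proof is accordingly computational rather than formal. After reducing (as in \autoref{lemma:reducing domain of integration}) to the subset $\widehat{U_k\bk{g}}$ where $\FNorm{r_2+r_3}\leq q$, it evaluates $E_k\bk{g}$ for $k=n$ and $k=n+1$ by computing the four volumes $\meas\bk{U_k^{i,j}\bk{g}}$ with \autoref{lemma:measuring lemma} and observing that the alternating sum cancels; for $k\more n+1$ it exploits an invariance of the reduced domain under the shift $\bk{r_1,x,r_3,r_4,r_5}\mapsto\bk{r_1,x+\unif^{-1},r_3,r_4,r_5+2r_3\unif^{-1}}$, giving $E_k\bk{g}=\psi\bk{\unif^{-1}}E_k\bk{g}=0$. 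The crucial point is that this is a symmetry of the specific set $\widehat{U_k\bk{g}}$ (and only for $k\more n+1$), not a right translation by an element of $U\cap gKg^{-1}$; and in the low-$k$ cases the vanishing comes from volume cancellations that no single oscillating subgroup can produce. To repair your argument you would essentially have to carry out these explicit computations, i.e. follow the paper's route.
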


Let $g=t x_\alpha(d)$, where $t=h_{\alpha}\bk{t_1} h_{\beta}\bk{t_2}$ and $\FNorm{d}>1$. Since $g\notin S_{\Psi_s} UTK$ it holds $\FNorm{d^2\alpha\bk{t}+d}\geq 1$.
By \autoref{eq:Conditions for m} $E_k\bk{t}=0$ unless
\[
t_1,\frac{t_2}{t_1},d^2t_1,d\frac{t_2}{t_1}\in\mO \ .
\]
Since $E_k\in \M^0_{\Psi_s}$ and $w_\alpha \in S_{\Psi_s}\subset K$ one has $E_k\bk{g}=E_k\bk{w_\alpha g w_\alpha^{-1}}$. Hence it is enough to compute $E_k\bk{g}$ when $\FNorm{d\alpha\bk{t}}=\FNorm{d\frac{t_1^2}{t_2}}\leq 1$.

The matrix $\iota\bk{x_{\alpha}\bk{d}}$ has the form
\[
\begin{pmatrix}
1 & d & 0 & 0 & 0 & 0 & 0 \\
0 & 1 & 0 & 0 & 0 & 0 & 0 \\
0 & 0 & 1 & -d & -\frac{d^2}{2} & 0 & 0 \\
0 & 0 & 0 & 1 & d & 0 & 0 \\
0 & 0 & 0 & 0 & 1 & 0 & 0 \\
0 & 0 & 0 & 0 & 0 & 1 & -d \\
0 & 0 & 0 & 0 & 0 & 0 & 1
\end{pmatrix} \ .
\]
Denote $\FNorm{t_1}=q^{-n}$, $\FNorm{t_2}=q^{-m}$ and $\FNorm{d\alpha\bk{t}}=q^l$. Under this notations $U_k\bk{g}=\emptyset$ when $k\less{n}$ and so we may assume that $k\geq{n}$.

We now reduce the domain of integration. The proof of this lemma is similar to the proof of \autoref{lemma:reducing domain of integration} and is omitted.
\begin{Lem}
\[
E_k\bk{t}=\int\limits_{\widehat{U_k\bk{g}}} \overline{\Psi_s\bk{u}} du \ ,
\]
where
\[
\widehat{U_k\bk{g}}=\set{ u(r_1, r_2, r_3, r_4, r_5)\in U_k\bk{g}\mvert \FNorm{r_2+r_3}\leq q} \ .
\]
\end{Lem}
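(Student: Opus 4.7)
The plan is to adapt the argument of the toral Lemma~\ref{lemma:reducing domain of integration} to the non-toral setting. In the toral case one used the two-parameter subgroup $\set{h_\beta(s_1)h_{2\alpha+\beta}(s_2)\mvert s_1,s_2\in\mO^\times}$ to produce independent rescalings of $r_2$ and $r_3$, which sufficed to prune each coordinate to $\FNorm{r_j}\leq q$. In the non-toral setting, conjugation by a generic such element would move $x_\alpha(d)$, so we must restrict to the kernel of $\alpha$; this still allows us to rescale the sum $r_2+r_3$, which is all the character $\Psi_s$ depends on.

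First I would identify the right symmetry. For $a\in\mO^\times$, set $h_a:=h_\alpha(a)h_\beta(a^2)\in T(\mO)\subset K$. A direct calculation using the $G_2$ Cartan pairings ($\langle\alpha,\check\beta\rangle=-1$, $\langle\beta,\check\alpha\rangle=-3$) gives $\alpha(h_a)=1$ and
\[
\beta(h_a)=(\alpha+\beta)(h_a)=(2\alpha+\beta)(h_a)=(3\alpha+\beta)(h_a)=a, \qquad (3\alpha+2\beta)(h_a)=a^2 \ .
\]
In particular $h_a$ commutes with $t$ and centralizes $x_\alpha(d)$, so $h_agh_a^{-1}=g$, and conjugation acts on $U$ by $u(r_1,r_2,r_3,r_4,r_5)\mapsto u(ar_1,ar_2,ar_3,ar_4,a^2r_5)$ with unimodular Jacobian $\FNorm{a}^6=1$.

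Next I would exploit this symmetry. Since $h_a\in K$ and $\Gamma$ is bi-$K$-invariant,
\[
\Gamma\bk{h_auh_a^{-1}\cdot g}=\Gamma\bk{h_a(ug)h_a^{-1}}=\Gamma(ug) \ ,
\]
so $U_k(g)$ is stable under conjugation by $h_a$. The change-of-variables $u\mapsto h_auh_a^{-1}$ therefore yields
\[
E_k(g)=\int_{U_k(g)}\overline{\psi\bk{a(r_2+r_3)}}\,du\qquad\text{for every }a\in\mO^\times \ .
\]
The same identity holds on the subset $V:=U_k(g)\cap\set{u\mvert\FNorm{r_2+r_3}\geq q^2}$, which is exactly the complement of $\widehat{U_k(g)}$ in $U_k(g)$; hence it suffices to prove $\int_V\overline{\psi(r_2+r_3)}\,du=0$. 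Averaging in $a$ and applying Fubini,
\[
\int_V\overline{\psi(r_2+r_3)}\,du=\frac{1}{\mu(\mO^\times)}\int_V\left[\int_{\mO^\times}\overline{\psi\bk{a(r_2+r_3)}}\,da\right]du \ .
\]
A standard computation shows $\int_{\mO^\times}\overline{\psi(az)}\,da=0$ whenever $\FNorm{z}\geq q^2$ (write $\mO^\times=\mO\setminus\unif\mO$ and use that $\psi$ is of conductor $\mO$). Since every $u\in V$ satisfies $\FNorm{r_2+r_3}\geq q^2$, the inner integral vanishes identically, so the outer integral is $0$.

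The main obstacle is the first step: one must find a torus element in $K$ whose conjugation simultaneously (i) centralizes $x_\alpha(d)$, i.e.\ lies in $\ker\alpha$, and (ii) induces the same scaling on $x_{\alpha+\beta}$ and $x_{2\alpha+\beta}$, so that the character factor $\psi(r_2+r_3)$ is uniformly rescaled. The condition $\alpha(h)=1$ pins down the one-parameter subgroup $\set{h_\alpha(a)h_\beta(a^2)}$, and only then does the $G_2$ root data conspire to equate the scalings on $r_2$ and $r_3$. Once this symmetry is in hand, the cancellation is the familiar unit-character averaging already implicit in the toral case.
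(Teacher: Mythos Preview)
Your argument is correct and is precisely the adaptation the paper has in mind when it says the proof ``is similar to the proof of \autoref{lemma:reducing domain of integration} and is omitted.'' The key point you identify---that one must restrict from the two-parameter torus $h_\beta(s_1)h_{2\alpha+\beta}(s_2)$ used in the toral case to the one-parameter subgroup $\ker\alpha\cap T(\mO)$ in order to fix $x_\alpha(d)$, and that this subgroup happens to scale $r_2$ and $r_3$ by the same unit---is exactly the content of the omitted argument. Your presentation via averaging over $\mO^\times$ is a mild repackaging of the paper's slice-and-measure approach (showing $\meas\bk{U_k^{(z)}(g)}$ depends only on $\FNorm{z}=\FNorm{r_2+r_3}$, then summing against $\int_{\FNorm{z}=q^i}\overline{\psi(z)}\,dz$), but the two are equivalent and your version is arguably cleaner.
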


\begin{Remark}
Denote $b=d\frac{t_1^2}{t_2}$. When $\FNorm{d\alpha\bk{t}}\leq{1}$, we have $u\in\widehat{U_k\bk{g}}$ if and only if
\begin{align*}
&k\geq n\\
&\FNorm{r_2+r_3}\leq{q}\\
&\FNorm{r_1},\FNorm{r_2},\FNorm{r_3}, \FNorm{r_2r_3+r_5},\FNorm{r_1r_3-r_2^2} \leq{q^{k+n-m}}\\
&\FNorm{br_1-r_2},\FNorm{br_2-r_3},\FNorm{br_3-r_4}\leq q^{k-n}\\
&\FNorm{r_2r_4-r_3^2-br_2r_3-br_5},\FNorm{r_1r_4-2r_2r_3+br_2^2-br_1r_3-r_5}\leq q^{k-n} \ .
\end{align*}
\end{Remark}

We are now ready to prove \autoref{Prop:non toral Fourier coefficient}.
\begin{proof}
\begin{itemize}
\item Assume that $\FNorm{b}\less{1}$ i.e. $l\less{0}$. Note that under this assumption
\[
\widehat{U_k\bk{g}}=\overline{U_k\bk{g}}
\]
and thus
\[
E_k\bk{g}=\meas\bk{U_k^{0,0}\bk{g}}-\meas\bk{U_k^{0,1}\bk{g}} -\meas\bk{U_k^{1,0}\bk{g}}+\meas\bk{U_k^{1,1}\bk{g}} \ .
\]
\begin{enumerate}
\item Assume $k=n$, then $u\in\widehat{U_k\bk{g}}$ if and only if
\begin{align*}
&\FNorm{r_2}\leq{q}\\
&\FNorm{r_1},\FNorm{r_5} \leq{q^{2n-m}}\\
&\FNorm{r_3},\FNorm{r_4}, \FNorm{br_1-r_2}, \FNorm{r_2r_4-br_5},\FNorm{r_1r_4-2r_2r_3+br_2^2-br_1r_3-r_5}\leq 1 \ .
\end{align*}
Hence, according to \autoref{lemma:measuring lemma} and arguments performed in the toral case,
\begin{align*}
&\meas\bk{U_k^{0,0}\bk{g}}=q^{-2l} ,
&\meas\bk{U_k^{1,0}\bk{g}}=q^{-2l} \\
&\meas\bk{U_k^{0,1}\bk{g}}=0 ,
&\meas\bk{U_k^{1,1}\bk{g}}=0 \ ,
\end{align*}
and then
\[
E_k\bk{g}=0 \ .
\]

\item Assume $k=n+1$, then $u\in\widehat{U_k\bk{g}}$ if and only if
\begin{align*}
&\FNorm{r_1}\leq q^{1-l}\\
&\FNorm{r_2},\FNorm{r_3},\FNorm{r_4},\FNorm{r_2r_4-r_3^2-br_5},\FNorm{r_1r_4-2r_2r_3-br_1r_3-r_5}\leq q \ .
\end{align*}
Hence, according to \autoref{lemma:measuring lemma} and arguments performed in the toral case,
\begin{align*}
&\meas\bk{U_k^{0,0}\bk{g}}=q^{2-l}\bk{1+\bk{1-q^{-1}}} ,
&\meas\bk{U_k^{1,0}\bk{g}}=q^{2-l}\bk{1+\bk{1-q^{-1}}} \\
&\meas\bk{U_k^{0,1}\bk{g}}=\bk{1-l}q^{2-l}\bk{q-q^{-1}} ,
&\meas\bk{U_k^{1,1}\bk{g}}=\bk{1-l}q^{2-l}\bk{q-q^{-1}} \ ,
\end{align*}
and then
\[
E_k\bk{g}=0 \ .
\]

\item Assume $k\more{n+1}$ and denote $x=r_2+r_3$. Then $u\in\widehat{U_k\bk{g}}$ if and only if
\begin{align*}
&\FNorm{x}\leq{q}\\
&\FNorm{r_5-r_3^2},\FNorm{r_1r_3-r_3^2} \leq{q^{k+n-m}}\\
&\FNorm{r_4},\FNorm{br_1},\FNorm{br_5+r_3r_4-\bk{b+1}r_3^2}, 
\FNorm{r_1\bk{r_4-br_3}+\bk{b+2}r_3^2-2xr_3-r_5}\leq q^{k-n} \ .
\end{align*}
The set $\widehat{U_k\bk{g}}$ is invariant under the change of variables
\[
\bk{r_1,x,r_3,r_4,r_5}\mapsto\bk{r_1,x+\unif^{-1},r_3,r_4,r_5+2r_3\unif^{-1}} \ .
\]
Making this change of variables in the integral yields
\[
E_k\bk{g}=\psi\bk{\unif^{-1}}E_k\bk{g} \ ,
\]
hence
\[
E_k\bk{g}=0 \ .
\]
\end{enumerate}

\item When $\FNorm{b}=\FNorm{d\frac{t_1^2}{t_2}}={1}$ the calculation is more involved and is omitted. Nonetheless $E_k$ vanishes on such elements and hence $D^{\Psi_s}$ also vanishes.
\end{itemize}
\end{proof}

\bibliographystyle{abbrv}
\bibliography{bib}

\begin{thebibliography}{10}

\bibitem{MR525651}
A.~N. Andrianov.
\newblock Multiplicative arithmetic of {S}iegel's modular forms.
\newblock {\em Uspekhi Mat. Nauk}, 34(1(205)):67--135, 1979.

\bibitem{MR2192819}
D.~Bump.
\newblock The {R}ankin-{S}elberg method: an introduction and survey.
\newblock In {\em Automorphic representations, {$L$}-functions and
  applications: progress and prospects}, volume~11 of {\em Ohio State Univ.
  Math. Res. Inst. Publ.}, pages 41--73. de Gruyter, Berlin, 2005.

\bibitem{MR1361787}
D.~Bump, M.~Furusawa, and D.~Ginzburg.
\newblock Non-unique models in the {R}ankin-{S}elberg method.
\newblock {\em J. Reine Angew. Math.}, 468:77--111, 1995.

\bibitem{MR571057}
W.~Casselman.
\newblock The unramified principal series of {$p$}-adic groups. {I}. {T}he
  spherical function.
\newblock {\em Compositio Math.}, 40(3):387--406, 1980.

\bibitem{MR2181091}
W.~T. Gan.
\newblock Multiplicity formula for cubic unipotent {A}rthur packets.
\newblock {\em Duke Math. J.}, 130(2):297--320, 2005.

\bibitem{MR1932327}
W.~T. Gan, B.~Gross, and G.~Savin.
\newblock Fourier coefficients of modular forms on {$G_2$}.
\newblock {\em Duke Math. J.}, 115(1):105--169, 2002.

\bibitem{MR1918673}
W.~T. Gan, N.~Gurevich, and D.~Jiang.
\newblock Cubic unipotent {A}rthur parameters and multiplicities of square
  integrable automorphic forms.
\newblock {\em Invent. Math.}, 149(2):225--265, 2002.

\bibitem{MR1203229}
D.~Ginzburg.
\newblock On the standard {$L$}-function for {$G_2$}.
\newblock {\em Duke Math. J.}, 69(2):315--333, 1993.

\bibitem{UnPublishedGinzburg}
D.~Ginzburg and J.~Hundley.
\newblock A doubling integral for {$G_2$}.
\newblock Preprint.

\bibitem{MR1696481}
B.~H. Gross.
\newblock On the {S}atake isomorphism.
\newblock In {\em Galois representations in arithmetic algebraic geometry
  ({D}urham, 1996)}, volume 254 of {\em London Math. Soc. Lecture Note Ser.},
  pages 223--237. Cambridge Univ. Press, Cambridge, 1998.

\bibitem{MR1637485}
J.-S. Huang, K.~Magaard, and G.~Savin.
\newblock Unipotent representations of {$G_2$} arising from the minimal
  representation of {$D_4^E$}.
\newblock {\em J. Reine Angew. Math.}, 500:65--81, 1998.

\bibitem{MR1617425}
D.~Jiang.
\newblock {$G_2$}-periods and residual representations.
\newblock {\em J. Reine Angew. Math.}, 497:17--46, 1998.

\bibitem{MR965059}
I.~Piatetski-Shapiro and S.~Rallis.
\newblock A new way to get {E}uler products.
\newblock {\em J. Reine Angew. Math.}, 392:110--124, 1988.

\bibitem{MR1020830}
S.~Rallis and G.~Schiffmann.
\newblock Theta correspondence associated to {$G_2$}.
\newblock {\em Amer. J. Math.}, 111(5):801--849, 1989.

\bibitem{MR727854}
N.~R. Wallach.
\newblock Asymptotic expansions of generalized matrix entries of
  representations of real reductive groups.
\newblock In {\em Lie group representations, {I} ({C}ollege {P}ark, {M}d.,
  1982/1983)}, volume 1024 of {\em Lecture Notes in Math.}, pages 287--369.
  Springer, Berlin, 1983.

\end{thebibliography}
\end{document}